\newtheorem{lemma}{Lemma}[section]
\newtheorem{theorem}{Theorem}[section]
\newtheorem{corollary}{Corollary}[section]
\newtheorem{remark}{Remark}[section]
\numberwithin{equation}{section}
\newcommand{\sref}[1]{\textsl{Section~\ref{#1}}}
\newcommand{\ssref}[1]{\textsl{Subsection~\ref{#1}}}
\newcommand{\tref}[1]{\textsl{Theorem~\ref{#1}}}
\newcommand{\lref}[1]{\textsl{Lemma~\ref{#1}}}
\newcommand{\cref}[1]{\textsl{Corollary~\ref{#1}}}
\newcommand{\rref}[1]{\textsl{Remark~\ref{#1}}}
\newcommand{\eref}[1]{\textsc{(\ref{#1})}}
\newcommand{\norm}[1]{\left\Vert#1\right\Vert}
\newcommand{\set}[1]{\left\{#1\right\}}
\newcommand{\R}{\mathbb R}
\newcommand{\Z}{\mathbb Z}
\newcommand{\ve}{\varepsilon}
\newcommand{\ol}{\overline}
\newcommand{\ul}{\underline}
\newcommand{\p}{\partial}
\newcommand{\dps}{\displaystyle}
\newcommand{\wt}{\widetilde}
\newcommand{\wh}{\widehat}
\newcommand{\ra}{\rightarrow}
\newcommand{\Ra}{\Rightarrow}
\newcommand{\Lra}{\Leftrightarrow}
\newcommand{\Llra}{\Longleftrightarrow}
\begin{document}


\title{On the exterior Dirichlet problem
for special Lagrangian equations\footnotemark[1]}


\author{Zhisu Li\footnotemark[2]}

\renewcommand{\thefootnote}{\fnsymbol{footnote}}

\footnotetext[1]{This research is supported by NSFC.11671316.}
\footnotetext[2]{School of Mathematics and Statistics,
Xi'an Jiaotong University, Xi'an 710049, China;\\
\href{mailto:lizhisu@stu.xjtu.edu.cn}{lizhisu@stu.xjtu.edu.cn}.}

\date{\today}


\maketitle

\begin{abstract}
In this paper, we establish
the existence and uniqueness theorem
of the exterior Dirichlet problem
for special Lagrangian equations
with prescribed asymptotic behavior at infinity.
\end{abstract}

\noindent\emph{Keywords:}
{Dirichlet problem, existence and uniqueness,
exterior domain, special Lagrangian equations,
Perron's method, prescribed asymptotic behavior}

\noindent\emph{2010 MSC:}
{35D40, 35J15, 35J25, 35J60}



\section{Introduction}

Let $D$ be a bounded domain in $\R^n$ ($n\geq3$).
We consider in this paper the Dirichlet problem for
the special Lagrangian equation
\begin{equation}\label{eqn.sle}
\sum_{i=1}^{n}\arctan\lambda_i(D^2u)=\Theta
\end{equation}
in the exterior domain $\R^n\setminus\ol{D}$,
where $\lambda_i(D^2u)$'s denote the eigenvalues
of the Hessian matrix $D^2u$,
and $\Theta$ is a constant such that
$(n-2)\pi/2\leq|\Theta|<n\pi/2$.

The special Lagrangian equation \eref{eqn.sle}
originates in the calibrated geometry \cite{HL82},
and plays an important role in the study of string theory
(see for example \cite{CMMS04}).
The left hand side of the equation \eref{eqn.sle}
indeed stands for the argument of the complex number
$(1+\sqrt{-1}\lambda_1(D^2u))...(1+\sqrt{-1}\lambda_n(D^2u))$,
which is usually called \emph{phase} or \emph{Lagrangian phase}.
When the phase is constant, the gradient graph
$\set{(x,Du(x))}\subset\R^n\times\R^n=\mathbb{C}^n$
then is called \emph{special Lagrangian}.
One can prove that $\set{(x,Du(x))}$ is special Lagrangian,
i.e., \eref{eqn.sle} holds for some constant
$\Theta\in(-n\pi/2,n\pi/2)$, if and only if
it is a volume minimizing minimal surface in $\R^n\times\R^n$.
In the literature, the Lagrangian phase $(n-2)\pi/2$
is usually called \emph{critical}, since the level set
\[L_\Theta:=\set{\lambda\in\R^n\bigg{|}\sum_{i=1}^{n}\arctan\lambda_i=\Theta}\]
is convex only if $|\Theta|\geq(n-2)\pi/2$ \cite[Lemma 2.1]{Yuan06}.
So the main results in this paper are concerning
the special Lagrangian equation \eref{eqn.sle}
with critical and supercritical phases.

Recently, we proved in \cite{LLY17} that
any smooth solution $u$ of \eref{eqn.sle}
with supercritical phase in the exterior domain
must tend to a quadratic polynomial $Q$ at infinity,
and satisfy
\begin{equation}\label{eqn.u=Q+O}
u(x)=Q(x)+O\left(\frac{1}{|x|^{n-2}}\right)
~(|x|\ra+\infty).
\end{equation}
That is, there exist
$A\in\R^{n\times n},b\in\R^n$ and $c\in\R$
such that
\begin{equation}\label{eqn.abc-n}
\limsup_{|x|\ra+\infty}|x|^{n-2}
\left|u(x)-\left(\frac{1}{2}x^{T}Ax+b^Tx+c\right)\right|<\infty.
\end{equation}
Thus for any given solution of \eref{eqn.sle} in the exterior domain,
we know definitely that
it obeys the above asymptotic behavior \eref{eqn.abc-n} at infinity.
So the converse problem arises, that is,
with any such prescribed asymptotic behavior at infinity,
whether there exists a unique solution of the Dirichlet problem
of the special Lagrangian equation \eref{eqn.sle} in the exterior domain?
The answer is yes (at least partially), and we will mainly focus on
this exterior Dirichlet problem in this paper.

To deal with the exterior Dirichlet problem,
as a lot of the previous researches suggested
(see \cite{CL03,DB11,BLL14,LB14}),
Perron's method is often adopted, and the key point of which is
to construct some appropriate subsolutions of the equation.
For this purpose, we turn to study the following algebraic form
of the special Lagrangian equation \eref{eqn.sle}:
\begin{equation}\label{eqn.sle-af}
\cos\Theta\sum_{0\leq 2k+1\leq n}(-1)^k\sigma_{2k+1}(\lambda(D^2u))
-\sin\Theta\sum_{0\leq 2k\leq n}(-1)^k\sigma_{2k}(\lambda(D^2u))=0,
\end{equation}
where $\sigma_k(\lambda)$'s are the elementary symmetric polynomials
with respect to $\lambda$, defined by
\[\sigma_0(\lambda)\equiv1
\quad\mathrm{and}\quad
\sigma_k(\lambda):=\sum_{1\leq s_1<s_2<...<s_k\leq n}
\lambda_{s_1}\lambda_{s_2}...\lambda_{s_k}
~(\forall 1\leq k\leq n).\]
Note that the solution of \eref{eqn.sle}
is always a solution of \eref{eqn.sle-af},
but the converse is not always true.
Our strategy is to construct
some proper subsolutions of \eref{eqn.sle-af},
and then come back to show that
they are exactly the desired subsolutions of \eref{eqn.sle}.
The techniques used here
to construct subsolutions of \eref{eqn.sle-af}
are partially inherited from our previous paper \cite{LL16}
concerning the exterior Dirichlet problem
for the Hessian quotient equations,
but are much harder than those and have been largely extended.

We would like to remark that, the rigidity theorems
for special Lagrangian equations on the whole space
have been fully studied by Prof. Y. Yuan, see for instance
\cite{Yuan02,Yuan06,WY08};
for more on the special Lagrangian equations,
we refer the readers to
\cite{HL82,Fu98,CWY09,WY14} and the references therein.

For the results concerning the exterior Dirichlet problems
of the Monge-Amp\`{e}re equations, of the Hessian equations,
and of the Hessian quotient equations,
see for example \cite{CL03,DB11,LD12,BLL14,LB14,LL16}
and the references therein.

\bigskip

Define
\begin{equation}\label{eqn.A0def}
\mathscr{A}^0_\Theta
:=\set{A\in S(n)\bigg{|}\lambda(A)\in\Gamma^+\cup\left(-\Gamma^+\right),
\sum_{i=1}^{n}\arctan\lambda_i(A)=\Theta},
\end{equation}
and
\begin{equation}\label{eqn.Adef}
\mathscr{A}_\Theta
:=\set{A\in\mathscr{A}^0_\Theta\bigg{|}
m\big(\Theta,\lambda(A)\big)>2},
\end{equation}
where
\begin{enumerate}[\quad(1)]
\item
$S(n)$ denotes the linear space of symmetric $n\times n$ real matrices;

\item
$\Gamma^+$ denotes the \emph{positive cone}
\[\Gamma^+:=\set{\lambda\in\R^n|\lambda_i>0,\forall i=1,2,...,n};\]

\item
$m(\Theta,\lambda)$ is a quantity introduced by us,
which plays an important role in the study of
the special Lagrangian equations.
For some special matrices $A\in\mathscr{A}_\Theta$,
for example, $A=\tan(\Theta/n)\,I_n$,
we have $m(\Theta,\lambda(A))=n$.
The specific definition of $m(\Theta,\lambda)$
will be given in \eref{eqn.mdef} in \ssref{subsec.XYZ},
when all the necessary preparation is done.
\end{enumerate}
Then the main results of this paper can be stated as the following theorems.
\begin{theorem}\label{thm.sle-Theta>0}
Let $D$ be a bounded strictly convex domain
in $\R^n$, $n\geq 3$, $\p D\in C^2$
and let $\varphi\in C^2(\p D)$.
Then for any given $A\in\mathscr{A}_\Theta$
with $(n-2)\pi/2\leq\Theta<n\pi/2$, and
any given $b\in \R^n$,
there exists a constant ${c_\ast}$
depending only on $n,D,\Theta,A,b$
and $\norm{\varphi}_{C^2(\p D)}$,
such that for every $c\geq{c_\ast}$,
there exists a unique viscosity solution
$u\in C^0(\R^n\setminus D)$ of
\begin{equation}\label{eqn.sle-abc}
\left\{
\begin{aligned}
\dps \sum_{i=1}^{n}\arctan\lambda_i&\left(D^2u\right)=\Theta
\quad\mathrm{in}~\R^n\setminus\ol{D},\\
\dps &\qquad~u=\varphi\quad\mathrm{on}~\p D,\\[0.2cm]
\dps \limsup_{|x|\ra+\infty}|x|^{m-2}
&\left|u(x)-\left(\frac{1}{2}x^{T}Ax+b^Tx+c\right)\right|<\infty,
\end{aligned}
\right.
\end{equation}
where $m\in(2,n]$ is a constant depending
only on $n,\Theta$ and $\lambda(A)$,
which actually can be taken as $m(\Theta,\lambda(A))$.
\end{theorem}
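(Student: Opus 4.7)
The plan is to apply Perron's method for viscosity solutions, which requires an ordered pair of sub- and supersolutions that match both the boundary data on $\p D$ and the prescribed quadratic asymptotics at infinity. Since $\Theta\in[(n-2)\pi/2,n\pi/2)$ is positive (as $n\geq 3$) and $A\in\mathscr{A}_\Theta$, the spectrum $\lambda(A)$ necessarily lies in $\Gamma^+$, and an orthogonal change of coordinates lets me further assume $A$ is diagonal. The supersolution is then immediate: the quadratic polynomial $P(x):=\tfrac12 x^T Ax+b^T x+c$ is itself a classical solution of \eref{eqn.sle} because $D^2 P\equiv A\in\mathscr{A}_\Theta$, and choosing $c$ sufficiently large (depending on $D$, $A$, $b$ and $\norm{\varphi}_{C^2(\p D)}$) ensures $P\geq\varphi$ on $\p D$.

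The heart of the argument is the construction of a matching subsolution $\ul{u}$ satisfying $\ul{u}=\varphi$ on $\p D$, $\ul{u}\leq P$ in $\R^n\setminus\ol{D}$, and $\ul{u}-P=O(|x|^{-(m-2)})$ at infinity. Following the strategy announced in the introduction, I would first work with the algebraic form \eref{eqn.sle-af} and seek $\ul{u}(x)=P(x)+w(x)$ with $w\leq 0$ a carefully designed perturbation. After normalizing $A$ via an affine change of variables adapted to its eigenstructure, $w$ is taken to depend on a rescaled radial variable $r$ with $w\sim -K r^{-(m-2)}$ at infinity, for a large constant $K$ to be chosen. The hypothesis $m(\Theta,\lambda(A))>2$ plays a dual role here: the positivity of $m-2$ secures the required decay rate, while the specific value of $m$ is exactly what makes the leading-order contribution of $D^2w$ to the symmetric functions $\sigma_k(\lambda(A+D^2w))$ in \eref{eqn.sle-af} have the correct sign, so that $\ul{u}$ satisfies the algebraic inequality as a subsolution. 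A convexity argument on the level set $L_\Theta$ (convex under the critical/supercritical phase assumption), together with the requirement that $\lambda(D^2\ul{u})$ stay in the admissible component of $\mathscr{A}^0_\Theta$, then upgrades $\ul{u}$ to a subsolution of \eref{eqn.sle} itself. Finally $K$, and correspondingly the threshold $c_\ast$, are tuned so that $\ul{u}$ attains the boundary data $\varphi$ from below on $\p D$.

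With $\ul{u}\leq P$ in hand, let $\mathcal{F}$ denote the family of viscosity subsolutions $v\in C^0(\R^n\setminus D)$ of \eref{eqn.sle} with $\ul{u}\leq v\leq P$ in $\R^n\setminus\ol{D}$ and $v\leq\varphi$ on $\p D$, and set $u(x):=\sup_{v\in\mathcal{F}}v(x)$. The standard Perron machinery, combined with the comparison principle for \eref{eqn.sle} restricted to the admissible component, shows that $u$ is a viscosity solution in $\R^n\setminus\ol{D}$. The squeeze $\ul{u}\leq u\leq P$ immediately yields the asymptotic behavior \eref{eqn.u=Q+O} with exponent $m-2$, while boundary continuity and the equality $u=\varphi$ on $\p D$ follow from $\ul{u}$ and $P$ acting as two-sided barriers there. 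Uniqueness is obtained from the comparison principle on the annuli $B_R\setminus\ol{D}$: the difference of any two solutions of \eref{eqn.sle-abc} is $O(R^{-(m-2)})$ on $\p B_R$, so letting $R\ra\infty$ forces them to coincide.

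The principal obstacle is the subsolution step. Verifying that the perturbation $w$ produces the correct sign in \eref{eqn.sle-af} requires a delicate expansion of $\sigma_k(\lambda(A+D^2w))$ around the asymptotic Hessian $A$, and is precisely where the technical innovations beyond the Hessian quotient treatment of \cite{LL16} must be introduced. The quantity $m(\Theta,\lambda)$, to be defined later in the paper, appears to be engineered exactly so that this sign condition holds with the largest admissible exponent, which is the reason the theorem is stated for $A\in\mathscr{A}_\Theta$ rather than merely $A\in\mathscr{A}^0_\Theta$.
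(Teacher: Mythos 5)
Your overall architecture is right: Perron's method, the quadratic $P(x)=\tfrac12 x^T Ax+b^T x+c$ as supersolution (valid since $A\in\mathscr A_\Theta^0$), normalization to diagonal $A$ and $b=0$ by an orthogonal change of variables, and uniqueness via the comparison principle exploiting the $O(R^{-(m-2)})$ decay at infinity --- all of this matches the paper. But there are two genuine gaps in the subsolution step.

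First, your ansatz $\ul u=P+w$ with $w$ a function of the rescaled radial variable $r=r_A(x)$ can never attain a \emph{general} boundary datum $\varphi\in C^2(\p D)$, since any such $\ul u$ is constant on each ellipsoid $\{r_A=\text{const}\}$ while $\p D$ is a generic strictly convex hypersurface. Perron's method (as in \lref{lem.pm}) needs $\ul u=\varphi$ exactly on $\p D$, not just $\ul u\leq\varphi$. The paper repairs this by introducing, for each $\xi\in\p D$, a quadratic $Q_\xi$ solving \eref{eqn.sle} and touching $\varphi$ from below at $\xi$ (\lref{lem.Qxi}, \rref{rmk.Qxi}), and then taking $\ul u$ to be $\max\{\Phi_{\beta(c)},\,\sup_\xi Q_\xi\}$ on a fixed annulus and $\Phi_{\beta(c)}$ outside. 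The envelope $Q:=\sup_\xi Q_\xi$ supplies the exact boundary values; the radial piece $\Phi$ supplies the asymptotics. Without this patching, your construction does not close.

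Second, the step you describe as ``a delicate expansion of $\sigma_k(\lambda(A+D^2w))$'' with a sign condition coming from $m$ is not how the sign is actually secured, and the quantity $m(\Theta,a)$ is not chosen to make that sign work --- it is \emph{output}, not input. The paper computes $\sigma_k(\lambda(D^2\Phi))$ exactly (\lref{lem.skm}: the Hessian of a generalized radial function is a rank-one perturbation of a diagonal), reduces the algebraic inequality $Z(\lambda(D^2\Phi))\geq 0$ to an ODE for $\psi:=\phi'/r$ using the sharp bounds $\Xi_k c_k\leq\xi_k c_k$ (\eref{Xikck<=xikck}), and solves that ODE (\lref{lem.psi}). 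The exponent $m=m(\Theta,a)$ emerges as the partial-fraction residue at the simple root $\psi=1$ of the polynomial $Z(\psi a)$ (see \eref{eqn.frac-K1=1/m}), and the hypothesis $m>2$ (i.e.\ $A\in\mathscr A_\Theta$ rather than merely $\mathscr A_\Theta^0$) is needed so that $\int^\infty\tau(\psi(\tau)-1)\,d\tau$ converges, which is precisely what makes $\Phi-\tfrac12 r^2$ bounded and controls $\Phi$ from below at infinity. Also, the upgrade from a subsolution of the algebraic form \eref{eqn.sle-af} to a subsolution of \eref{eqn.sle} itself is done not by a convexity argument on $L_\Theta$ but by a continuity/topology argument: $\lambda(D^2\Phi(x))\to a$ as $r_A(x)\to\infty$, $Z(\lambda(D^2\Phi))\geq 0$ everywhere, and $Z$ changes sign only across the finitely many level sets $H=\Theta+k\pi$; since $L_\Theta$ is the outermost one, $H(\lambda(D^2\Phi))\geq\Theta$ follows (\eref{eqn.laD2PhiTTa}--\eref{eqn.Z>0Z<0}).

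In short: the skeleton you sketch is the correct one, but the boundary-matching via the tangent quadratics $Q_\xi$ is missing, and the core ODE construction of the radial subsolution --- together with the actual mechanism by which $m>2$ enters --- is not the perturbative expansion you gesture at.
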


\begin{theorem}\label{thm.sle-Theta<0}
Let $D$ be a bounded strictly convex domain
in $\R^n$, $n\geq 3$, $\p D\in C^2$
and let $\varphi\in C^2(\p D)$.
Then for any given $A\in\mathscr{A}_\Theta$
with $-n\pi/2<\Theta\leq-(n-2)\pi/2$, and
any given $b\in \R^n$,
there exists a constant ${c^\ast}$
depending only on $n,D,\Theta,A,b$
and $\norm{\varphi}_{C^2(\p D)}$,
such that for every $c\leq{c^\ast}$,
there exists a unique viscosity solution
$u\in C^0(\R^n\setminus D)$ of \eref{eqn.sle-abc}, 
where $m\in(2,n]$ is a constant depending
only on $n,\Theta$ and $\lambda(A)$,
which actually can be taken as $m(-\Theta,-\lambda(A))$.
\end{theorem}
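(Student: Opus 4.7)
The plan is to reduce \tref{thm.sle-Theta<0} to \tref{thm.sle-Theta>0} via the involution $u\mapsto-u$. Concretely, I would set $\tilde{u}=-u$, $\tilde{\varphi}=-\varphi$, $\tilde{A}=-A$, $\tilde{b}=-b$, $\tilde{c}=-c$, and $\tilde{\Theta}=-\Theta$. Since $D^2\tilde{u}=-D^2u$ gives $\lambda_i(D^2\tilde{u})=-\lambda_i(D^2u)$, the operator $\sum_i\arctan\lambda_i(\cdot)$ changes sign, so \eref{eqn.sle} for $u$ with phase $\Theta\in(-n\pi/2,-(n-2)\pi/2]$ is equivalent to the same equation for $\tilde{u}$ with phase $\tilde{\Theta}=-\Theta\in[(n-2)\pi/2,n\pi/2)$. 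A routine check shows that this correspondence respects viscosity solutions (test functions touching from above/below swap, and the sign flip in the operator compensates), so the exterior Dirichlet problem \eref{eqn.sle-abc} for $u$ becomes an exterior Dirichlet problem of the same form for $\tilde{u}$ with transformed boundary datum $\tilde{\varphi}$ and transformed asymptotic quadratic $\tilde{Q}(x)=\tfrac{1}{2}x^T\tilde{A}x+\tilde{b}^Tx+\tilde{c}$.

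Next, I would verify that the transformed data satisfy the hypotheses of \tref{thm.sle-Theta>0}. The membership $\tilde{A}\in\mathscr{A}^0_{\tilde{\Theta}}$ is immediate from $\lambda(\tilde{A})=-\lambda(A)\in\Gamma^+\cup(-\Gamma^+)$ and $\sum_i\arctan\lambda_i(\tilde{A})=\tilde{\Theta}$. The stronger membership $\tilde{A}\in\mathscr{A}_{\tilde{\Theta}}$ amounts to $m(-\Theta,-\lambda(A))>2$, and this is precisely the constant that already appears in the statement of \tref{thm.sle-Theta<0}; it will follow from the definition of $m$ in \ssref{subsec.XYZ} together with the natural symmetry $A\in\mathscr{A}_\Theta\Llra -A\in\mathscr{A}_{-\Theta}$ built into that definition. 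Granting this, \tref{thm.sle-Theta>0} applied to $(\tilde{A},\tilde{b},\tilde{\varphi},\tilde{\Theta})$ supplies a threshold constant $\tilde{c}_\ast$, depending only on $n,D,\tilde{\Theta},\tilde{A},\tilde{b},\norm{\tilde{\varphi}}_{C^2(\p D)}$ (equivalently on $n,D,\Theta,A,b,\norm{\varphi}_{C^2(\p D)}$), such that for every $\tilde{c}\geq\tilde{c}_\ast$ there is a unique viscosity solution $\tilde{u}\in C^0(\R^n\setminus D)$ with the prescribed asymptotic behavior at infinity controlled by $m=m(\tilde{\Theta},\lambda(\tilde{A}))=m(-\Theta,-\lambda(A))$.

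Finally, I would set $c^\ast:=-\tilde{c}_\ast$ and recover $u:=-\tilde{u}$. The lower-bound condition $\tilde{c}\geq\tilde{c}_\ast$ is identical to the upper-bound condition $c\leq c^\ast$, existence of $u$ is automatic, and uniqueness transfers through the bijection $u\leftrightarrow-u$. The only nontrivial ingredient in this reduction is to check that $m$ enjoys the symmetry $m(\Theta,\lambda)=m(-\Theta,-\lambda)$ (so that $\mathscr{A}_\Theta$ and $\mathscr{A}_{-\Theta}$ correspond under negation), and I expect this to be immediate from the forthcoming definition in \eref{eqn.mdef}. Everything else in the reduction is bookkeeping, which is why the proof of \tref{thm.sle-Theta<0} can reasonably be deferred to a brief remark once \tref{thm.sle-Theta>0} has been established.
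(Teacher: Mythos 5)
Your reduction via $\tilde u=-u$ is exactly the paper's argument: Remark~1.1(1) declares the two theorems equivalent under this substitution and proves only \tref{thm.sle-Theta>0}, leaving the bookkeeping you spell out (sign transfer of the viscosity conditions, $A\in\mathscr{A}_\Theta\Llra-A\in\mathscr{A}_{-\Theta}$, $c^\ast:=-\tilde c_\ast$) implicit. The only minor point worth flagging is that for $\Theta<0$ the quantity $m(\Theta,\lambda(A))$ appearing in the definition of $\mathscr{A}_\Theta$ must itself be read as $m(-\Theta,-\lambda(A))$, since \eref{eqn.mdef} defines $m$ only for $\Theta\in(0,n\pi/2)$ and $a\in L_\Theta\cap\Gamma^+$, so the symmetry you propose to verify is a matter of convention built into the definitions rather than a computation.
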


\begin{remark}
\begin{enumerate}[(1)]
\item
By symmetry, letting $\wt u=-u$, we see easily that
\tref{thm.sle-Theta>0} and \tref{thm.sle-Theta<0} are equivalent.
Thus we need only to consider $a=\lambda(A)\in\Gamma^+$
and prove \tref{thm.sle-Theta>0} in the rest of this paper.

\item
$\mathscr{A}_\Theta$ is a large class of matrices
containing $\tan(\Theta/n)\,I_n$,
which will be clear in \ssref{subsec.XYZ}.
For some particular $n$ and $\Theta$ we will see that
$\mathscr{A}_\Theta=\mathscr{A}^0_\Theta$,
which is the best possible situation we can hope.
For example, if $n=3,4$, $\Theta=\pm\pi$,
we have the special Lagrangian equation in the algebraic form
$\sigma_3(\lambda(D^2u))=\sigma_1(\lambda(D^2u))$
(in three dimension, this indeed is $\det(D^2u)=\Delta u$).
Then the $A\in\mathscr{A}_\Theta$
in \tref{thm.sle-Theta>0} and \tref{thm.sle-Theta<0},
can be chosen as any matrix $A\in S(n)$ such that
$\lambda(A)\in L_\Theta\cap(\Gamma^+\cup(-\Gamma^+))$,
i.e., any $A\in\mathscr{A}^0_\Theta$.
This result is obtained for the first time
in our previous paper \cite{LL16},
since $\sigma_3(\lambda(D^2u))=\sigma_1(\lambda(D^2u))$
is also a Hessian quotient equation.

\item
The restrictions on the Lagrangian phase $\Theta$
to be critical or supercritical are subtle and technical.
We believe that they can be discarded more or less,
since, from \cite{Yuan02}, we know that
any convex solution of \eref{eqn.sle}
(no restriction on the Lagrangian phase) in $\R^n$
must be a quadratic polynomial.

\item
As far as we know, there are only two papers
concerning the exterior Dirichlet problem
for special Lagrangian equations,
but both are highly restricted.
For example,
\cite{BL13} considered the problem in the cases that
$n\leq4$, $\Theta=\pm\pi$ and $A=\tan(\Theta/n)\,I_n$,
which are just the Hessian quotient equations;
\cite{LB14} considered the problem in the cases that
$n\geq3$, $|\Theta|>(n-1)\pi/2$ and $A=\tan(\Theta/n)\,I_n$.
We remark that the results appeared
in \cite{BL13} and \cite{LB14}
can all be recovered by our \tref{thm.sle-Theta>0}
and \tref{thm.sle-Theta<0} as special cases.
\qed
\end{enumerate}
\end{remark}

\medskip

The paper is organized as follows.
In \sref{sec.prel},
we first give some specific definitions of terminologies
which are standard in the literature,
and a list of notations introduced only by us in this paper.
Then we collect in \ssref{subsec.prelem} some well known lemmas
which are mostly used in \sref{sec.pmaint}.
In \sref{sec.Q&P},
we first introduce quantities $\Xi_k,\ul\xi_k,\ol\xi_k$
and investigate their properties in \ssref{subsec.xi}.
Then, in \ssref{subsec.XYZ}, we study polynomials
$X,Y,Z,\wh X,\wh Y$ and $\wh Z$
related to the algebraic special Lagrangian equation \eref{eqn.sle-af},
which play fundamental role in this paper.
\sref{sec.pmaint} is devoted to
the proof of the main theorem (\tref{thm.sle-Theta>0}).
To do so, we start in \ssref{subsec.csubsol}
to construct some appropriate subsolutions of
the special Lagrangian equation \eref{eqn.sle},
by solving some proper ordinary differential equation.
Then, after reducing \tref{thm.sle-Theta>0} to \lref{lem.sle}
by normalization in \ssref{subsec.lemsle>thmsle},
we prove \lref{lem.sle} in \ssref{subsec.pr-lem-sle}
by applying the Perron's method to
the subsolutions we constructed in \ssref{subsec.csubsol}.
The last section, \sref{sec.appendix}, is an appendix,
which is devoted to prove \lref{lem.XhY-YhX=SS}
asserted by us in \ssref{subsec.XYZ}.

\section{Preliminary}\label{sec.prel}

\subsection{Notation}\label{subsec.nott}

In this paper,
$S(n)$ denotes the linear space of symmetric $n\times n$ real matrices,
and $I_n$ denotes the identity matrix.

For any $M\in S(n)$,
if $m_1,m_2,...,m_n$ are the eigenvalues of $M$
(usually, the assumption $m_1\leq m_2\leq...\leq m_n$
is added for convenience),
we will denote this fact briefly by $\lambda(M)=(m_1,m_2,...,m_n)$
and call $\lambda(M)$ the eigenvalue vector of $M$.
For convenience,
we write often $\mathds{1}:=\lambda(I_n)=(1,1,...,1)$.

For $A\in S(n)$ and $\rho>0$, we denote by
\[
E_\rho:=\left\{x\in\R^n\big{|}x^TAx<\rho^2\right\}
=\left\{x\in\R^n\big{|}r_A(x)<\rho\right\}
\]
the ellipsoid of size $\rho$ with respect to $A$,
where we set $r_A(x):=\sqrt{x^TAx}$.

\medskip

For any $p\in\R^n$, we write
\[\sigma_k(p):=\sum_{1\leq s_1<s_2<...<s_k\leq n}
p_{s_1}p_{s_2}...p_{s_k}\quad(\forall 1\leq k\leq n)\]
as the $k$-th elementary symmetric function of $p$.
Meanwhile, we will adopt the conventions that
$\sigma_{-1}(p)\equiv0$, $\sigma_0(p)\equiv1$
and $\sigma_k(p)\equiv0$, $\forall k\geq n+1$;
and we will also define
\[\sigma_{k;i}(p):=\left(\sigma_{k}(\lambda)
\big{|}_{\lambda_i=0}\right)\Big{|}_{\lambda=p}
=\sigma_{k}\left(p_1,p_2,...,\widehat{p_i},...,p_n\right)\]
for any $-1\leq k\leq n$ and any $1\leq i\leq n$,
and similarly
\[\sigma_{k;i,j}(p):=\left(\sigma_{k}(\lambda)
\big{|}_{\lambda_i=\lambda_j=0}\right)\Big{|}_{\lambda=p}
=\sigma_{k}\left(p_1,p_2,...,\widehat{p_i},
...,\widehat{p_j},...,p_n\right)\]
for any $-1\leq k\leq n$
and any $1\leq i,j\leq n$, $i\neq j$, for convenience.

\medskip

For the reader's convenience,
we give the following list of notations
which are only introduced by us in this paper.
\begin{enumerate}[\qquad$\diamond$]
\item
$\Xi_k=\Xi_k(a,x)$, $\ul\xi_k=\ul\xi_k(a)$,
$\ol\xi_k=\ol\xi_k(a)$
----- see \eref{eqn.Xikdef},
\eref{eqn.olxikdef}, \eref{eqn.olxikdef};

\item
$c_k=c_k(\Theta)$, $\xi_k=\xi_k(\Theta,a)$
----- see \eref{eqn.ckdef}, \eref{eqn.xikdef};

\item
$H(\lambda):=\sum\arctan\lambda_i$
----- see \eref{eqn.Hdef};

$X:=1-\sigma_2+\sigma_4-...$
----- see \eref{eqn.Xdef};

$Y:=\sigma_1-\sigma_3+\sigma_5-...$
----- see \eref{eqn.Ydef};

$\wh X:=-2\sigma_2+4\sigma_4-...$
----- see \eref{eqn.hXdef};

$\wh Y:=\sigma_1-3\sigma_3+5\sigma_5-...$
----- see \eref{eqn.hYdef};

$Z:=\cos\Theta\;Y-\sin\Theta\;X=\sum{c_k\sigma_k}$
----- see \eref{eqn.Zdef};

$\wh Z:=\cos\Theta\;\wh Y-\sin\Theta\;\wh X=\sum{kc_k\sigma_k}$
----- see \eref{eqn.hZdef};

\item
$N=N(n)$ ----- see \eref{eqn.Ndef};

$m(\Theta,a)$ ----- see \eref{eqn.mdef},
\eref{eqn.frac-K1=1/m}, \eref{eqn.mdlnr};

$\mathscr{A}^0_\Theta$, $\mathscr{A}_\Theta$
----- see \eref{eqn.A0def}, \eref{eqn.Adef};

\item
$\psi=\psi(r)=\phi'(r)/r$, $\phi=\phi(r)$, $\Phi=\Phi(x)=\phi(r_A(x))$
----- see \lref{lem.psi}, \eref{eqn.phidef}, \eref{eqn.Phidef}.
\end{enumerate}

The reasons for why we define them will be clear in future sections.
We just give a few comments about them here now.
To establish the existence of the solution
of the special Lagrangian equation \eref{eqn.sle}
by the Perron's method, the key point is
to construct some appropriate subsolutions
of its algebraic form \eref{eqn.sle-af}.
Since \eref{eqn.sle-af} is a fully nonlinear equation
consisting of polynomials with respect to the eigenvalues
of the Hessian matrix $D^2u$
(i.e., polynomials $\sigma_k(\lambda(D^2u))$, $k=1,2,...,n$)
of different order of homogeneities,
to solve it we need to strike a balance among them.
It will turn out to be clear that
the quantities $\Xi_k,\ul\xi_k,\ol\xi_k$ and $\xi_k$
are very natural and perfectly fit for this purpose.
Roughly speaking, $\Xi_k$ originates from
the computation of $\sigma_k(\lambda(D^2\Phi(x)))$
where $\Phi(x)=\phi(r_A(x))$ is a generalized radially symmetric function
(see \lref{lem.skm} and the proof of \lref{lem.Phi-subsol});
$\ul\xi_k,\ol\xi_k$ and $\xi_k$ result from
the comparison among different $\sigma_k(\lambda)$'s
in the attempt to derive an ordinary differential equation
from the original equation \eref{eqn.sle-af}
(see the proof of \lref{lem.Phi-subsol});
$m(\Theta,a)$ arises in the process of solving
this ordinary differential equation
(see \eref{eqn.frac-K1=1/m} and \eref{eqn.mdlnr}
in the proof of \lref{lem.psi}),
so do the polynomials $X,Y,Z,\wh X,\wh Y,\wh Z$
and their coefficients $c_k$,
which offer us deeper understanding
of the algebraic special Lagrangian equation \eref{eqn.sle-af}
and play crucial roles in the construction of the subsolutions.
By $\Xi_k,\ul\xi_k,\ol\xi_k$ and $\xi_k$,
we get a good balance among different $\sigma_k(\lambda)$'s,
which can be viewed as being measured by $m(\Theta,a)$.
Furthermore, we will find that
$m(\Theta,a)$ has also some special meaning related to
the decay and asymptotic behavior of the solution
(see \lref{lem.psi}\textsl{-(ii)},
\cref{cor.mub} and \tref{thm.sle-Theta>0}).

\subsection{Some preliminary lemmas}\label{subsec.prelem}

In this subsection,
we collect some well known preliminary lemmas
which will be mainly used in \sref{sec.pmaint}.

\medskip

We first give a lemma to compute $\sigma_k(\lambda(M))$
with $M$ of certain type.
~If $\Phi(x):=\phi(r)$ with $\phi\in C^2$, $r=\sqrt{x^TAx}$,
$A\in S(n)\cap\Gamma^+$ and $a=\lambda(A)$
(we may call $\Phi$ a \emph{generalized radially symmetric function}
with respect to $A$, according to \cite{BLL14}),
one can conclude that
\[\p_{ij}\Phi(x)
=\frac{\phi'(r)}{r}a_i\delta_{ij}+
\frac{\phi''(r)-\frac{\phi'(r)}{r}}{r^2}(a_ix_i)(a_jx_j),
~\forall 1\leq i,j\leq n,\]
provided $A$ is normalized to a diagonal matrix
(see \ssref{subsec.lemsle>thmsle}
and the proof of \lref{lem.Phi-subsol} for details).
As far as we know, there is generally no explicit formula
for $\lambda(D^2\Phi(x))$ of this type,
but luckily we have a method
to calculate $\sigma_k\left(\lambda(D^2\Phi(x))\right)$
for each $1\leq k\leq n$,
which can be represented as the following lemma.
\begin{lemma}\label{lem.skm}
If $M=\left(p_i\delta_{ij}+s q_iq_j\right)_{n\times n}$
with $p,q\in\R^n$ and $s\in\R$,
then
\[\sigma_k\left(\lambda(M)\right)
=\sigma_k(p)+s\sum_{i=1}^n\sigma_{k-1;i}(p)q_i^2,
~\forall 1\leq k\leq n.\]
\end{lemma}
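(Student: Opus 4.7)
The plan is to write $\sigma_k(\lambda(M))$ as the sum of $k\times k$ principal minors of $M$, exploit the rank-one structure of the perturbation $sqq^T$ on each principal submatrix, and then reorganize the resulting double sum.

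First I would recall the elementary identity
\[
\sigma_k(\lambda(M))=\sum_{|S|=k}\det(M_S),
\]
where $S$ ranges over all $k$-element subsets of $\{1,\ldots,n\}$ and $M_S$ denotes the corresponding principal submatrix of $M$. For the matrix at hand, each $M_S$ has the form $D_S+sv_Sv_S^T$ with $D_S=\mathrm{diag}(p_i)_{i\in S}$ and $v_S=(q_i)_{i\in S}$.

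Next, I would compute $\det(M_S)$ by writing its $c$-th column as $p_ce_c+sq_cv_S$ and expanding by multilinearity in the columns. Any term that picks $sq_jv_S$ from two or more columns produces a matrix with two proportional columns and hence contributes $0$; the only surviving terms are the pure diagonal term, giving $\prod_{i\in S}p_i$, and, for each $i\in S$, the term obtained by replacing column $i$ with $sq_iv_S$ while keeping the diagonal on the other columns. In this latter matrix row $i$ has a single nonzero entry $sq_i^2$ at column $i$ (since the $i$-th component of $v_S$ is $q_i$, while each other column $p_je_j$ has $0$ in row $i$), so a cofactor expansion along row $i$ reduces to a diagonal determinant and yields $sq_i^2\prod_{j\in S\setminus\{i\}}p_j$. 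Summing the surviving contributions gives
\[
\det(M_S)=\prod_{i\in S}p_i+s\sum_{i\in S}q_i^2\prod_{j\in S\setminus\{i\}}p_j.
\]

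Finally, I would sum the preceding identity over all $|S|=k$. The first term produces $\sigma_k(p)$ directly by definition. For the second term I would swap the order of summation so as to fix $i$ first; then $S\setminus\{i\}$ ranges over all $(k-1)$-subsets of $\{1,\ldots,n\}\setminus\{i\}$, so the inner sum equals $\sigma_{k-1;i}(p)$. Combining the two pieces yields exactly the claimed identity.

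The main obstacle is simply the careful column-expansion bookkeeping; conceptually this is nothing more than the matrix determinant lemma $\det(D+svv^T)=\det(D)+s\,v^T\mathrm{adj}(D)\,v$ applied to each principal submatrix. Alternatively one could first verify the identity when all $p_i\neq 0$ via $\det(D+svv^T)=\det(D)(1+sv^TD^{-1}v)$ and then extend to the general case by the fact that both sides are polynomials in $p$.
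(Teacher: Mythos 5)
Your proof is correct. The paper gives no argument of its own for this lemma and simply cites \cite{BLL14}, so there is no internal proof to compare to, but the route you take --- summing $\det(M_S)$ over $k\times k$ principal submatrices, expanding each $\det(D_S+s v_S v_S^T)$ by column multilinearity so that any term choosing the rank-one column twice vanishes, then reindexing the double sum --- is the standard one and is precisely the per-submatrix form of the matrix determinant lemma $\det(D+svv^T)=\det D+s\,v^T\mathrm{adj}(D)\,v$, as you observe. The alternative you sketch at the end, namely computing the characteristic polynomial $\det(tI-M)=\det(tI-D)\bigl(1-s\,q^T(tI-D)^{-1}q\bigr)=\prod_j(t-p_j)-s\sum_i q_i^2\prod_{j\ne i}(t-p_j)$ for $t$ outside the spectrum of $D$, reading off the coefficient of $t^{n-k}$, and extending by polynomial identity, is the version one typically finds in the cited references; it trades the minor-by-minor bookkeeping for a single identity plus a density argument, and both routes are sound.
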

\begin{proof}
See \cite{BLL14}.
\end{proof}

\medskip

To process information on the boundary
we need the following lemma.
\begin{lemma}\label{lem.Qxi}
Let $D$ be a bounded strictly convex domain
of $\R^n$, $n\geq 2$, $\p D\in C^2$,
$\varphi\in C^0(D)\cap C^2(\p{D})$
and let $A\in S(n)$, $\det{A}\neq0$.
Then there exists a constant $K>0$ depending only on $n$,
$\mbox{\emph{diam}}\,D$, the convexity of $D$,
$\norm{\varphi}_{C^2(\ol{D})}$,
the $C^2$ norm of $\p D$ and the upper bound of $A$,
such that for any $\xi\in\p D$,
there exists $\bar{x}(\xi)\in\R^n$ satisfying
\[\left|\bar{x}(\xi)\right|\leq K
\quad \mbox{and} \quad Q_\xi(x)<\varphi(x),
~\forall x\in \ol{D}\setminus\{\xi\},\]
where
\[Q_\xi(x):=\frac{1}{2}\left(x-\bar{x}(\xi)\right)^TA\left(x-\bar{x}(\xi)\right)
-\frac{1}{2}\left(\xi-\bar{x}(\xi)\right)^TA\left(\xi-\bar{x}(\xi)\right)
+\varphi(\xi),~\forall x\in\R^n.\]
\end{lemma}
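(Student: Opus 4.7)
The plan is to pick $\bar{x}(\xi)$ so that the linear part of $Q_\xi$ at $\xi$ matches $\varphi$ in the tangential direction (making the touching second-order in the tangent plane) and has a large inward-normal component; this will force $Q_\xi$ below $\varphi$ on $\ol D\setminus\set{\xi}$. Since $\p D\in C^2$, one may first extend $\varphi$ to $\wt\varphi\in C^2(\ol D)$ with $\norm{\wt\varphi}_{C^2(\ol D)}\leq C(D)\norm{\varphi}_{C^2(\p D)}$. Let $\nu(\xi)$ denote the outward unit normal to $\p D$ at $\xi$, and set
\[
\bar{x}(\xi):=\xi-A^{-1}\bigl(\nabla_T\wt\varphi(\xi)+t\,\nu(\xi)\bigr),
\]
where $t>0$ is a constant, uniform in $\xi$, to be chosen later. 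A direct computation gives $Q_\xi(\xi)=\varphi(\xi)$ and $\nabla Q_\xi(\xi)=A(\xi-\bar{x}(\xi))=\nabla_T\wt\varphi(\xi)+t\,\nu(\xi)$.

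Write $w(x):=\wt\varphi(x)-Q_\xi(x)$. Second-order Taylor expansion of $\wt\varphi$ around $\xi$, combined with the exact expansion of $Q_\xi$, yields
\[
w(x)=\bigl(\nu(\xi)\cdot\nabla\wt\varphi(\xi)-t\bigr)\nu(\xi)\cdot(x-\xi)+\tfrac{1}{2}(x-\xi)^T\bigl[D^2\wt\varphi(\eta)-A\bigr](x-\xi)
\]
for some $\eta$ on the segment $[\xi,x]$. The crux is a uniform strict-convexity gap: since $D$ is strictly convex with $\p D\in C^2$ compact, there exists $\theta>0$ depending only on the convexity of $D$ and the $C^2$ norm of $\p D$ such that
\[
\nu(\xi)\cdot(x-\xi)\leq-\theta\,|x-\xi|^2,\quad\forall\,\xi\in\p D,\ x\in\ol D.
\]
Locally near $\xi$ this follows from a uniform lower bound on the principal curvatures of $\p D$; for $|x-\xi|$ bounded below it follows by compactness, since $\nu(\xi)\cdot(\xi-x)$ is strictly positive on $\p D\times\ol D$ off the diagonal. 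Combining this with the obvious bound $|(x-\xi)^T[D^2\wt\varphi(\eta)-A](x-\xi)|\leq(\norm{D^2\wt\varphi}_\infty+\norm{A})|x-\xi|^2$ yields
\[
w(x)\geq\Bigl[\theta\bigl(t-|\nabla\wt\varphi(\xi)|\bigr)-\tfrac{1}{2}\bigl(\norm{D^2\wt\varphi}_\infty+\norm{A}\bigr)\Bigr]|x-\xi|^2.
\]

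Finally I would fix $t$, uniformly in $\xi$, large enough that the bracket exceeds $1$; the required lower bound on $t$ depends only on $\theta$, $\norm{\wt\varphi}_{C^2(\ol D)}$, and the upper bound of $A$, hence only on the data listed in the statement. This forces $w(x)\geq|x-\xi|^2>0$ on $\ol D\setminus\set{\xi}$, i.e., $Q_\xi(x)<\varphi(x)$ strictly. The estimate
\[
|\bar{x}(\xi)|\leq\mathrm{diam}\,D+\norm{A^{-1}}\bigl(\norm{\nabla\wt\varphi}_\infty+t\bigr)\leq K
\]
then provides the uniform bound. The main obstacle is the uniform convexity gap above: upgrading from a local curvature-based estimate near each $\xi$ to a global one on $\ol D$ with a constant independent of $\xi$, and then choosing $t$ uniformly so that the full estimate on $w$ is strictly positive on all of $\ol D\setminus\set{\xi}$; once these are in place, everything else reduces to a routine Taylor computation.
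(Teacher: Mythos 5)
The paper defers to \cite{CL03} and \cite{BLL14} rather than giving its own proof, and your argument faithfully reconstructs the standard Caffarelli--Li construction used there: choose $\bar{x}(\xi)$ so that $A(\xi-\bar{x}(\xi))$ has tangential part $\nabla_T\wt\varphi(\xi)$ and a large normal component $t\,\nu(\xi)$, then dominate the quadratic Taylor error via the uniform convexity gap $\nu(\xi)\cdot(x-\xi)\leq -\theta|x-\xi|^2$ for $x\in\ol D$. The argument is correct; the only quibble is that your (honest) bound $|\bar{x}(\xi)|\leq\mathrm{diam}\,D+\norm{A^{-1}}\big(\norm{\nabla\wt\varphi}_\infty+t\big)$ shows that $K$ must also depend on $\norm{A^{-1}}$, which is a mild imprecision in the lemma's stated dependency list (``the upper bound of $A$'') rather than a flaw in your proof, and is harmless here since the paper applies the lemma to a single fixed $A\in\mathscr{A}_\Theta$.
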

\begin{proof}
See \cite{CL03} or \cite{BLL14}.
\end{proof}

\begin{remark}\label{rmk.Qxi}
It is easy to check that
$Q_\xi$ satisfy the following properties.
\begin{enumerate}[\quad(1)]
\item $Q_\xi\leq\varphi$ on $\ol{D}$
and $Q_\xi(\xi)=\varphi(\xi)$.

\item If $A\in\mathscr{A}^0_\Theta$, then
\[\sum_{i=1}^{n}\arctan\lambda_i\left(D^2Q_\xi\right)=\Theta
\quad\mbox{in}~\R^n.\]

\item There exists $\bar{c}=\bar{c}(D,A,K)>0$ such that
\[Q_\xi(x)\leq\frac{1}{2}x^TAx+\bar{c},
\quad \forall x\in\p D,~\forall\xi\in\p D.\]
\end{enumerate}
\end{remark}

\medskip

Now we introduce the following well known lemmas
about the comparison principle and Perron's method.
These lemmas are adaptions of those appeared
in \cite{CNS85} \cite{Jen88} \cite{Ish89}
\cite{Urb90} and \cite{CIL92}.
For specific proof of them one may also consult
\cite{BLL14} and \cite{LB14}.
\begin{lemma}[Comparison principle]\label{lem.cp}
Assume $f\in C^1(\R^n)$ and
$f_{\lambda_i}(\lambda)>0$, $\forall \lambda\in\R^n$,
$\forall i=1,2,...,n$.
Let $\Omega\subset\R^n$ be a domain
and let $\ul{u},\ol{u}\in C^0(\ol\Omega)$ satisfying
\[f\left(\lambda\left(D^2\ul{u}\right)\right)\geq 0
\geq f\left(\lambda\left(D^2\ol{u}\right)\right)\]
in $\Omega$ in the viscosity sense.
Suppose $\ul{u}\leq \ol{u}$ on $\p\Omega$
(and additionally
\[\lim_{|x|\ra+\infty}\left(\ul{u}-\ol{u}\right)(x)=0\]
provided $\Omega$ is unbounded).
Then $\ul{u}\leq \ol{u}$ in $\Omega$.
\end{lemma}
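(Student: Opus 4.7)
The plan is to argue by contradiction via Ishii's doubling-of-variables technique, with the strict ellipticity hypothesis $f_{\lambda_i}>0$ activated by a quadratic perturbation that upgrades $\ul u$ to a quantitative strict subsolution.

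First I would reduce to a bounded domain. If $\Omega$ is unbounded, then for any $\ve>0$ I pick $R$ so large that $\ul u-\ol u<\ve$ on $\Omega\setminus B_R$ using the decay assumption at infinity, apply the bounded-domain conclusion on $\Omega\cap B_R$ with $\ol u+\ve$ in place of $\ol u$ (which remains a viscosity supersolution and still dominates $\ul u$ on the enlarged boundary), and send $\ve\ra 0$. Next I would perturb: fix $\delta>0$, choose $K>\sup_{\ol\Omega}|x|^2$, and set $\ul u_\delta(x):=\ul u(x)+\delta(|x|^2-K)$. Then $\ul u_\delta<\ul u\leq\ol u$ on $\p\Omega$. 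If a $C^2$ function $\phi$ touches $\ul u_\delta$ from above at an interior $x_0$, then $\phi-\delta(|x|^2-K)$ touches $\ul u$ from above, so $f(\lambda(D^2\phi(x_0)-2\delta I))\geq 0$; since $D^2\phi(x_0)$ has eigenvalues exactly $2\delta$ larger than those of $D^2\phi(x_0)-2\delta I$, the strict positivity of $f_{\lambda_i}$ yields the quantitative bound $f(\lambda(D^2\phi(x_0)))\geq 2\delta\,c>0$, with $c>0$ a lower bound of $\min_i f_{\lambda_i}$ on the compact eigenvalue range relevant in the next step.

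Next I would suppose for contradiction that $\ul u_\delta-\ol u$ attains a positive maximum at some interior $\hat x\in\Omega$ and apply the Crandall--Ishii--Lions theorem of sums to $\Phi_\alpha(x,y):=\ul u_\delta(x)-\ol u(y)-(\alpha/2)|x-y|^2$ on $\ol\Omega\times\ol\Omega$. For large $\alpha$ the maximizers $(x_\alpha,y_\alpha)$ remain interior, $\alpha|x_\alpha-y_\alpha|^2\ra 0$, and the theorem produces symmetric matrices $X_\alpha\leq Y_\alpha$ (bounded by $O(\alpha)$) with $(\alpha(x_\alpha-y_\alpha),X_\alpha)\in\ol J^{2,+}\ul u_\delta(x_\alpha)$ and $(\alpha(x_\alpha-y_\alpha),Y_\alpha)\in\ol J^{2,-}\ol u(y_\alpha)$. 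The strict subsolution estimate, passed to the closure, gives $f(\lambda(X_\alpha))>0$; the supersolution property of $\ol u$ gives $f(\lambda(Y_\alpha))\leq 0$. Since $X_\alpha\leq Y_\alpha$ forces $\lambda_i(X_\alpha)\leq\lambda_i(Y_\alpha)$ for every $i$ (with eigenvalues sorted), and $f$ is monotone in each argument, one has $f(\lambda(X_\alpha))\leq f(\lambda(Y_\alpha))$---contradicting the preceding inequalities. Hence $\ul u_\delta\leq\ol u$ in $\Omega$ for every $\delta>0$, and sending $\delta\ra 0$ concludes.

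The main obstacle I anticipate is preserving the strict inequality $f(\lambda(X_\alpha))>0$ through the limiting closure defining $\ol J^{2,+}$: since $f$ is only strictly---not uniformly---elliptic on all of $\R^n$, a naive strict subsolution bound could collapse in the limit. The remedy is precisely the quantitative margin $2\delta c$ from the perturbation step, which one needs to hold uniformly over the $\alpha$-bounded eigenvalue range delivered by the theorem of sums; with that margin in hand the strictness survives the closure, and the matrix-monotonicity inequality immediately closes the argument.
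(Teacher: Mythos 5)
Your argument is correct and is the standard Crandall--Ishii--Lions viscosity comparison proof: exhaustion of the unbounded domain using the decay at infinity, quadratic perturbation $\ul u\mapsto \ul u+\delta(|x|^2-K)$ to obtain a quantitatively strict subsolution, doubling of variables via the theorem of sums to produce $X_\alpha\le Y_\alpha$, and finally eigenvalue monotonicity of $f$ to force the contradiction. The paper itself supplies no proof of this lemma---it only cites [CNS85], [Jen88], [Ish89], [Urb90], [CIL92] and the adaptations in [BLL14], [LB14]---and your proof is precisely the argument carried out in those references (most directly in [CIL92]).
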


\begin{lemma}[Perron's method]\label{lem.pm}
Assume $f\in C^1(\R^n)$ and
$f_{\lambda_i}(\lambda)>0$, $\forall \lambda\in\R^n$,
$\forall i=1,2,...,n$.
Let $\Omega\subset\R^n$ be a domain, $\varphi\in C^0(\p\Omega)$
and let $\ul{u},\ol{u}\in C^0(\ol\Omega)$ satisfying
\[f\left(\lambda\left(D^2\ul{u}\right)\right)\geq 0
\geq f\left(\lambda\left(D^2\ol{u}\right)\right)\]
in $\Omega$ in the viscosity sense.
Suppose $\ul{u}\leq \ol{u}$ in $\Omega$,
$\ul{u}=\varphi$ on $\p\Omega$
(and additionally
\[\lim_{|x|\ra+\infty}\left(\ul{u}-\ol{u}\right)(x)=0\]
provided $\Omega$ is unbounded).
Then
\begin{eqnarray*}
u(x)&:=&\sup\Big\{v(x)\big{|}v\in C^0(\Omega),~
\ul{u}\leq v\leq \ol{u}~\mbox{in}~\Omega,~
f\left(\lambda\left(D^2v\right)\right)\geq 0~\mbox{in}~\Omega\\
&~&\qquad\mbox{in the viscosity sense},~
v=\varphi~\mbox{on}~\p\Omega\Big\}
\end{eqnarray*}
is the unique viscosity solution of the Dirichlet problem
\[\left\{
\begin{aligned}
f\left(\lambda\left(D^2u\right)\right)=0 \qquad &\mbox{in}& \Omega,\\
u=\varphi \qquad &\mbox{on}& \p\Omega.
\end{aligned}\right.
\]
\end{lemma}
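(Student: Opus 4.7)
The plan is to run Ishii's classical Perron construction within the viscosity-solution framework, using the comparison principle of \lref{lem.cp} as the closing device. The ellipticity hypothesis $f_{\lambda_i}>0$ plays a double role: it makes \lref{lem.cp} applicable, and it ensures that the pointwise maximum used in the bump argument below remains a subsolution. First I would record the easy a-priori information: $\ul u$ is itself an admissible competitor, so the family is nonempty, and every admissible $v$ obeys $\ul u\leq v\leq\ol u$ in $\Omega$ with $v=\varphi$ on $\p\Omega$. Taking the supremum therefore yields $\ul u\leq u\leq\ol u$ and $u=\varphi$ on $\p\Omega$, and, in the unbounded case, $u-\ol u\ra 0$ at infinity.

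Next I would show that the upper semicontinuous envelope $u^*$ is a viscosity subsolution of $f(\lambda(D^2u))=0$ by the standard ``sup of subsolutions'' argument: given a $C^2$ test function $\phi$ touching $u^*$ strictly from above at an interior point $x_0$, select $x_k\ra x_0$ and admissible $v_k$ with $v_k(x_k)\ra u^*(x_0)$, produce local-max points $y_k\ra x_0$ of $v_k-\phi$ (perturbing $\phi$ by a quadratic if needed to localize), and pass to the limit in the subsolution inequality $f(\lambda(D^2\phi(y_k)))\geq 0$.

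The main obstacle is proving that the lower semicontinuous envelope $u_*$ is a viscosity supersolution, which I would handle by the classical ``bump'' argument. Suppose for contradiction that some $\phi\in C^2$ touches $u_*$ strictly from below at $x_0\in\Omega$ with $f(\lambda(D^2\phi(x_0)))>0$. Consider the perturbation
\[\phi_{\delta,\ve}(x):=\phi(x)+\delta-\ve|x-x_0|^2,\]
which, by continuity of $f$, remains a strict classical subsolution on a small ball $B_r(x_0)$. With $r$ and then $\ve,\delta$ chosen so that $\phi_{\delta,\ve}<u$ on $\p B_r(x_0)$ but $\phi_{\delta,\ve}(x_0)>u(x_0)$, the glued function
\[w(x):=\begin{cases}\max\set{u(x),\phi_{\delta,\ve}(x)},&x\in B_r(x_0),\\ u(x),&x\in\Omega\setminus B_r(x_0),\end{cases}\]
would be an admissible competitor strictly exceeding $u$ at $x_0$, contradicting the definition of $u$. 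The delicate points are: (i) $w$ is a viscosity subsolution, which uses the standard fact that the pointwise max of two subsolutions is a subsolution together with $w=u$ near $\p B_r(x_0)$; (ii) $w\leq\ol u$, which holds because $\phi_{\delta,\ve}$ can be arranged just above $u_*(x_0)\leq\ol u(x_0)$, with a preliminary downward shift of $\phi$ if $u_*(x_0)=\ol u(x_0)$ (in which case $\ol u$ is already a solution at $x_0$ and nothing needs to be perturbed); (iii) in the unbounded case the asymptotic condition $w-\ol u\ra 0$ is unaffected because the modification has compact support in $\Omega$.

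Finally, combining $u^*\geq u\geq u_*$ with the boundary data $u^*=u_*=\varphi$ on $\p\Omega$ and (where applicable) the asymptotic decay $u^*-u_*\ra 0$ at infinity squeezed between $\ul u$ and $\ol u$, \lref{lem.cp} applied to $u^*$ (subsolution) and $u_*$ (supersolution) forces $u^*\leq u_*$. Hence $u=u^*=u_*\in C^0(\ol\Omega)$ is the desired viscosity solution of the Dirichlet problem. Uniqueness follows from two symmetric applications of \lref{lem.cp}, one in each direction.
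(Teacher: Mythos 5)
The paper itself does not prove this lemma; it cites \cite{CNS85,Jen88,Ish89,Urb90,CIL92} and points to \cite{BLL14,LB14} for specific proofs, so there is no ``paper proof'' to compare against. Your sketch does follow the standard Ishii--Perron construction from that literature, and the main lines (nonemptiness of the admissible family via $\ul u$; ``sup of subsolutions'' for $u^*$; the bump argument for $u_*$; closing with \lref{lem.cp}) are correct. Two small points of imprecision are worth tightening. First, your claim that $\phi_{\delta,\ve}(x_0)>u(x_0)$ tacitly uses $u_*(x_0)=u(x_0)$; this is true, but it should be said explicitly that $u$, being the supremum of a family of continuous functions, is automatically lower semicontinuous, so $u_*=u$. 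Second, in your item (ii), the case $u_*(x_0)=\ol u(x_0)$ is not handled by ``shifting $\phi$ downward''; rather, that case simply cannot occur in the bump argument: if $\phi$ touches $u_*=u$ from below at $x_0$ and $u(x_0)=\ol u(x_0)$ with $u\leq\ol u$, then $\phi$ also touches $\ol u$ from below at $x_0$, and the supersolution property of $\ol u$ forces $f(\lambda(D^2\phi(x_0)))\leq 0$, directly contradicting the assumed strict inequality.

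The genuine gap in the proposal is the boundary behaviour of $u^*$. You assert ``$u^*=u_*=\varphi$ on $\p\Omega$'' and immediately apply \lref{lem.cp}, but only the lower half of this is immediate: since $\ul u\in C^0(\ol\Omega)$ with $\ul u=\varphi$ on $\p\Omega$ and $u\geq\ul u$, one gets $u_*=\varphi$ on $\p\Omega$. For the upper half, the hypotheses give only $\varphi=\ul u\leq\ol u$ on $\p\Omega$ and $u\leq\ol u$, hence $u^*\leq\ol u$ on $\p\Omega$, which is strictly weaker than $u^*\leq\varphi$ when $\ol u>\varphi$ on the boundary (as indeed happens in the application in \ssref{subsec.pr-lem-sle}, where $\ol u(x)=\tfrac12x^TAx+c$ does not equal $\varphi$ on $\p D$). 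To conclude $u^*(\xi)=\varphi(\xi)$ for $\xi\in\p\Omega$ one needs an upper barrier at each boundary point (a continuous local supersolution touching $\varphi$ from above at $\xi$), and this is the step that actually consumes the geometric/regularity assumptions supplied in the references. Your sketch should either add this barrier construction or state explicitly that boundary attainment from above is being taken as part of the hypotheses, as the cited sources effectively do.
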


\begin{remark}\label{rmk.f=arctan}
Clearly, by letting
\[f(\lambda):=H(\lambda)-\Theta
\triangleq\sum_{i=1}^{n}\arctan\lambda_i-\Theta,\]
the special Lagrangian equation \eref{eqn.sle}
satisfies the above two lemmas.
\end{remark}

\section{Quantities and polynomials related to
the special Lagrangian equations}\label{sec.Q&P}

\subsection{Quantities $\Xi_k,\ul\xi_k,\ol\xi_k$ and their properties}\label{subsec.xi}

The quantities $\Xi_k,\ul\xi_k$ and $\ol\xi_k$
originate in the calculation of $\sigma_k(\lambda(D^2\Phi))$
and in the comparison among different $\sigma_k(\lambda(D^2\Phi))$'s
(see \lref{lem.skm} and the proof of \lref{lem.Phi-subsol}).
They were partially given for the first time
in our previous paper \cite{LL16}.
For completeness and convenience,
we introduce them again in this subsection.

For any $0\leq k\leq n$ and any $a\in\R^n\setminus\{0\}$, let
\begin{equation}\label{eqn.Xikdef}
\Xi_k:=\Xi_k(a,x)
:=\frac{\sum_{i=1}^n\sigma_{k-1;i}(a)a_i^2x_i^2}
{\sigma_k(a)\sum_{i=1}^{n}a_ix_i^2},
~\forall x\in\R^n\setminus\{0\},
\end{equation}
and define
\begin{equation}\label{eqn.olxikdef}
\ol{\xi}_k:=\ol{\xi}_k(a)
:=\sup_{x\in\R^n\setminus\{0\}}\Xi_k(a,x),
\end{equation}
and
\begin{equation}\label{eqn.ulxikdef}
\ul{\xi}_k:=\ul{\xi}_k(a)
:=\inf_{x\in\R^n\setminus\{0\}}\Xi_k(a,x).
\end{equation}

\bigskip

It is easy to see that
\[\ol{\xi}_k(\varrho a)=\ol{\xi}_k(a),
~\ul{\xi}_k(\varrho a)=\ul{\xi}_k(a),
~\forall\varrho\neq0,
~\forall a\in\R^n\setminus\{0\},
~\forall 0\leq k\leq n,\]
and
\begin{equation}\label{eqn.xikrho1=k/n}
\ul\xi_k(\varrho\mathds{1})=\frac{k}{n}
=\ol\xi_k(\varrho\mathds{1}),
~\forall \varrho\neq0,~\forall 0\leq k\leq n.
\end{equation}
Furthermore, we have the following lemma.
\begin{lemma}\label{lem.xik}
Suppose $a=(a_1,a_2,...,a_n)$
with $0<a_1\leq a_2\leq...\leq a_n$.
Then
\begin{equation}\label{eqn.uxknox}
0<\frac{a_1\sigma_{k-1;1}(a)}{\sigma_k(a)}=\ul{\xi}_k(a)
\leq\frac{k}{n}\leq\ol{\xi}_k(a)
=\frac{a_n\sigma_{k-1;n}(a)}{\sigma_k(a)}\leq 1,
~\forall 1\leq k\leq n;
\end{equation}
\begin{equation}\label{eqn.olxiin}
0=\ol{\xi}_0(a)<\frac{1}{n}\leq\frac{a_n}{\sigma_1(a)}
=\ol{\xi}_1(a)\leq\ol{\xi}_2(a)
\leq...\leq\ol{\xi}_{n-1}(a)<\ol{\xi}_n(a)=1;
\end{equation}
and
\begin{equation}\label{eqn.ulxiin}
0=\ul{\xi}_0(a)<\frac{a_1}{\sigma_1(a)}
=\ul{\xi}_1(a)\leq\ul{\xi}_2(a)
\leq...\leq\ul{\xi}_{n-1}(a)<\ul{\xi}_n(a)=1.
\end{equation}
Moreover,
\begin{equation}\label{eqn.xkkn}
\ul\xi_k(a)=\frac{k}{n}=\ol\xi_k(a)
\end{equation}
for some $1\leq k\leq n-1$,
if and only if $a=C\mathds{1}$ for some $C>0$.
\end{lemma}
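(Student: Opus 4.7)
My plan is to reduce the sup/inf problem defining $\ol\xi_k$ and $\ul\xi_k$ to a comparison among just $n$ numbers. Setting $y_i:=a_ix_i^2\ge 0$ (not all zero when $x\ne 0$), the definition \eref{eqn.Xikdef} rewrites as the weighted average
$$
\Xi_k(a,x)=\frac{\sum_{i=1}^n t_i\,y_i}{\sum_{i=1}^n y_i},\qquad
t_i:=\frac{a_i\,\sigma_{k-1;i}(a)}{\sigma_k(a)},
$$
of the $n$ numbers $t_1,\ldots,t_n$. Because $a_i>0$, the weights $y_i$ range over all nonnegative tuples (not identically zero) as $x$ varies over $\R^n\setminus\{0\}$; concentrating all weight on a single index shows $\ul\xi_k(a)=\min_i t_i$ and $\ol\xi_k(a)=\max_i t_i$. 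Using $\sigma_k(a)=a_i\sigma_{k-1;i}(a)+\sigma_{k;i}(a)$, I would rewrite $t_i=1-\sigma_{k;i}(a)/\sigma_k(a)$, so minimizing $t_i$ amounts to maximizing $\sigma_{k;i}(a)$. Since $\sigma_{k;i}(a)-\sigma_{k;j}(a)=(a_j-a_i)\sigma_{k-1;i,j}(a)$ and the $a_\ell$ are positive and ordered, the ordering $a_1\le\cdots\le a_n$ implies $\sigma_{k;1}(a)\ge\sigma_{k;i}(a)\ge\sigma_{k;n}(a)$, equivalently $t_1\le t_i\le t_n$, which yields the explicit formulas in \eref{eqn.uxknox}; the boundary cases $k=0$ (via $\sigma_{-1;i}\equiv 0$) and $k=n$ (via $\sigma_{n;i}\equiv 0$), as well as $k=1$ (via $\sigma_{0;i}\equiv 1$), and the bounds $0<t_1$, $t_n\le 1$, follow directly.

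For the bracketing by $k/n$ I would invoke the elementary identity
$$
\sum_{i=1}^n a_i\,\sigma_{k-1;i}(a)=k\,\sigma_k(a),
$$
which counts each monomial $a_{s_1}\cdots a_{s_k}$ in $\sigma_k(a)$ exactly $k$ times on the left. Dividing by $\sigma_k(a)$ gives $\sum_i t_i=k$, so $k/n$ is the arithmetic mean of the $t_i$'s and necessarily lies between $\min_i t_i$ and $\max_i t_i$.

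For the monotonicity in $k$ in \eref{eqn.olxiin}, I would peel off $a_n$ by writing $a'=(a_1,\ldots,a_{n-1})$ so that $\sigma_{k;n}(a)=\sigma_k(a')$ and $\sigma_k(a)=a_n\sigma_{k-1}(a')+\sigma_k(a')$. Substituting into $\ol\xi_k(a)\le\ol\xi_{k+1}(a)$ and cross-multiplying, the $a_n\sigma_{k-1}(a')\sigma_k(a')$ terms cancel and the inequality reduces to the classical Newton inequality
$$
\sigma_{k-1}(a')\,\sigma_{k+1}(a')\le\sigma_k(a')^2
$$
for the $n-1$ positive reals $a'$. Peeling $a_1$ instead of $a_n$ handles \eref{eqn.ulxiin} in exactly the same way. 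The strict inequalities $\ol\xi_{n-1}<\ol\xi_n=1$ and $\ul\xi_{n-1}<\ul\xi_n=1$ then follow from $\sigma_{n-1;n}(a),\,\sigma_{n-1;1}(a)>0$.

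For the rigidity \eref{eqn.xkkn}, the equality $\ul\xi_k(a)=\ol\xi_k(a)$ with $1\le k\le n-1$ forces all $t_i$ equal, hence $a_i\sigma_{k-1;i}(a)=a_j\sigma_{k-1;j}(a)$ for every $i\ne j$. Expanding $\sigma_{k-1;i}(a)=a_j\sigma_{k-2;i,j}(a)+\sigma_{k-1;i,j}(a)$ and symmetrically for $j$, the $a_ia_j\sigma_{k-2;i,j}(a)$ pieces cancel and one is left with $(a_i-a_j)\sigma_{k-1;i,j}(a)=0$. Since $1\le k\le n-1$ implies $0\le k-1\le n-2$, the quantity $\sigma_{k-1;i,j}(a)$ is $\sigma_{k-1}$ of $n-2$ positive reals and hence strictly positive, forcing $a_i=a_j$ and thus $a=C\mathds{1}$. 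The only genuine obstacle I anticipate is bookkeeping the boundary indices $k\in\{0,1,n-1,n\}$ cleanly and verifying that the Newton-inequality reduction is applied in exactly the allowed range; the ingredients themselves are all elementary.
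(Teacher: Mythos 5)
Your proof is correct and follows essentially the same route as the paper's: both identify $\ul\xi_k,\ol\xi_k$ as the extreme values of $t_i=a_i\sigma_{k-1;i}(a)/\sigma_k(a)$ (using the identity $(a_i-a_j)\sigma_{k-1;i,j}(a)$ for the ordering), both obtain the bracketing by $k/n$ from $\sum_i a_i\sigma_{k-1;i}(a)=k\sigma_k(a)$, both reduce the monotonicity in $k$ to Newton's inequality applied to $\sigma_\bullet(a')$ with one coordinate removed, and both get the rigidity from the strict positivity of $\sigma_{k-1;i,j}(a)$. Your phrasing in terms of weighted averages (so that $k/n$ is visibly the arithmetic mean of the $t_i$) is a cosmetic streamlining of the paper's direct sup/inf computation, not a genuinely different argument; aside from the trivial slip that the cancelling term in the Newton-inequality reduction is $a_n^2\sigma_{k-1}(a')\sigma_k(a')$ rather than $a_n\sigma_{k-1}(a')\sigma_k(a')$, everything checks out.
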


\begin{proof}
($1^\circ$)
By the definitions of $\sigma_k(a)$ and $\sigma_{k;i}(a)$,
we see that
\begin{equation}\label{eqn.sk}
\sigma_k(a)=\sigma_{k;i}(a)+a_i\sigma_{k-1;i}(a),
~\forall 1\leq i\leq n;
\end{equation}
and
\[\sum_{i=1}^{n}\sigma_{k;i}(a)
=\frac{nC_{n-1}^k}{C_n^k}\sigma_k(a)
=(n-k)\sigma_k(a).\]
Hence we obtain
\begin{equation}\label{eqn.ksk}
\sum_{i=1}^{n}a_i\sigma_{k-1;i}(a)=k\sigma_k(a).
\end{equation}

Now we show that
\begin{equation}\label{eqn.aiski}
a_1\sigma_{k-1;1}(a)
\leq a_2\sigma_{k-1;2}(a)\leq...
\leq a_n\sigma_{k-1;n}(a).
\end{equation}
In fact, for any $i\neq j$,
similar to \eref{eqn.sk}, we have
\[a_i\sigma_{k-1;i}(a)
=a_i\left(\sigma_{k-1;i,j}(a)+a_j\sigma_{k-2;i,j}(a)\right),\]
\[a_j\sigma_{k-1;j}(a)
=a_j\left(\sigma_{k-1;i,j}(a)+a_i\sigma_{k-2;i,j}(a)\right),\]
and hence
\[a_i\sigma_{k-1;i}(a)-a_j\sigma_{k-1;j}(a)
=(a_i-a_j)\sigma_{k-1;i,j}(a).\]
Therefore, if $a_i\lessgtr a_j$, then
\begin{equation}\label{eqn.aslgeq}
a_i\sigma_{k-1;i}(a)\lessgtr a_j\sigma_{k-1;j}(a).
\end{equation}

By the definition of $\ol\xi_k$, we have
\begin{eqnarray*}
\ol{\xi}_k(a)
&=&\sup_{x\neq 0}\frac{\sum_{i=1}^n\sigma_{k-1;i}(a)a_i^2x_i^2}
{\sigma_k(a)\sum_{i=1}^{n}a_ix_i^2}\\
&\geq& \sup_{\substack{x_1=...=x_{n-1}=0,\\x_n\neq 0}}
\frac{\sum_{i=1}^n\sigma_{k-1;i}(a)a_i^2x_i^2}
{\sigma_k(a)\sum_{i=1}^{n}a_ix_i^2}\\
&=&\sup_{x_n\neq 0}\frac{\sigma_{k-1;n}(a)a_n^2x_n^2}
{\sigma_k(a)a_n x_n^2}\\
&=&\frac{a_n\sigma_{k-1;n}(a)}{\sigma_k(a)}
\end{eqnarray*}
and
\begin{eqnarray*}
\ol{\xi}_k(a)
&=&\sup_{x\neq0}\frac{\sum_{i=1}^n\sigma_{k-1;i}(a)a_i^2x_i^2}
{\sigma_k(a)\sum_{i=1}^{n}a_ix_i^2}\\
&\leq& \sup_{x\neq0}\frac{a_n\sigma_{k-1;n}(a)\sum_{i=1}^na_ix_i^2}
{\sigma_k(a)\sum_{i=1}^{n}a_ix_i^2}
\qquad\text{\big(according to \eref{eqn.aiski}\big)}\\
&=&\frac{a_n\sigma_{k-1;n}(a)}{\sigma_k(a)}.
\end{eqnarray*}
Hence we obtain
\begin{equation}\label{eqn.olxik}
\ol\xi_k(a)=\frac{a_n\sigma_{k-1;n}(a)}{\sigma_k(a)}.
\end{equation}
Similarly
\begin{equation}\label{eqn.ulxik}
\ul\xi_k(a)=\frac{a_1\sigma_{k-1;1}(a)}{\sigma_k(a)}.
\end{equation}

From \eref{eqn.ksk}, we have
\[\sum_{i=1}^{n}\frac{a_i\sigma_{k-1;i}(a)}{\sigma_k(a)}=k.\]
Combining this with \eref{eqn.aiski},
\eref{eqn.olxik} and \eref{eqn.ulxik},
we deduce that
\[\ul\xi_k(a)\leq\frac{k}{n}\leq\ol\xi_k(a).\]
Thus the proof of \eref{eqn.uxknox} is complete,
and \eref{eqn.xkkn} is also clear in view of \eref{eqn.aslgeq}.

\medskip

($2^\circ$)
Since it follows from \eref{eqn.sk} that
\[a_i\sigma_{k-1;i}(a)<\sigma_k(a),
~\forall 1\leq i\leq n,
~\forall 1\leq k\leq n-1,\]
we obtain
\[\ul\xi_k(a)\leq\ol\xi_k(a)<1,~\forall 0\leq k\leq n-1.\]
On the other hand, we have $\ol{\xi}_n(a)=\ul{\xi}_n(a)=1$
which follows from
\[a_i\sigma_{n-1;i}(a)=\sigma_n(a),~\forall 1\leq i\leq n.\]

Combining \eref{eqn.olxik} and \eref{eqn.sk}, we observe that
\begin{eqnarray*}
\ol\xi_k(a)&=&\frac{a_n\sigma_{k-1;n}(a)}{\sigma_k(a)}
=\frac{a_n\sigma_{k-1;n}(a)}{\sigma_{k;n}(a)+a_n\sigma_{k-1;n}(a)}\\
&\leq&\frac{a_n\sigma_{k;n}(a)}{\sigma_{k+1;n}(a)+a_n\sigma_{k;n}(a)}
=\frac{a_n\sigma_{k;n}(a)}{\sigma_{k+1}(a)}
=\ol\xi_{k+1}(a),
\end{eqnarray*}
where we used the inequality
\[\frac{\sigma_{k-1;n}(a)}{\sigma_{k;n}(a)}
\leq\frac{\sigma_{k;n}(a)}{\sigma_{k+1;n}(a)}\]
which is a variation of the famous Newton inequality (see \cite{HLP34})
\[\sigma_{k-1}(\lambda)\sigma_{k+1}(\lambda)
\leq\left(\sigma_{k}(\lambda)\right)^2,
~\forall\lambda\in\R^n.\]
This completes the proof of \eref{eqn.olxiin},
and similarly of \eref{eqn.ulxiin}.
\end{proof}

\subsection{Polynomials $X,Y,Z,\wh X,\wh Y,\wh Z$
and their properties}\label{subsec.XYZ}

The polynomials appeared in \eref{eqn.sle-af}
play crucial roles in this paper.
For convenience, we give their names by setting
\begin{eqnarray}\label{eqn.Xdef}
X(\lambda)&:=&\sum_{0\leq 2j\leq n}(-1)^j\sigma_{2j}(\lambda)\nonumber\\
&=&1-\sigma_2(\lambda)+\sigma_4(\lambda)-...,
\end{eqnarray}
and
\begin{eqnarray}\label{eqn.Ydef}
Y(\lambda)&:=&\sum_{0\leq 2j+1\leq n}(-1)^j\sigma_{2j+1}(\lambda)\nonumber\\
&=&\sigma_1(\lambda)-\sigma_3(\lambda)+\sigma_5(\lambda)-...,
\end{eqnarray}
for any $\lambda\in\R^n$. Write for short also
\begin{equation}\label{eqn.Hdef}
H(\lambda):=\sum_{i=1}^{n}{\arctan\lambda_i},
~\forall\lambda\in\R^n.
\end{equation}
We get the following elementary relations
according to the trigonometry:
\begin{equation}\label{eqn.Y/X=T}
\frac{Y(\lambda)}{X(\lambda)}=\tan{H(\lambda)}
=\frac{\sin{H(\lambda)}}{\cos{H(\lambda)}},
\end{equation}
and
\begin{equation}\label{eqn.X/Y=C}
\frac{X(\lambda)}{Y(\lambda)}=\cot{H(\lambda)}
=\frac{\cos{H(\lambda)}}{\sin{H(\lambda)}},
\end{equation}
provided the denominators are not zero.
Moreover, we see that
\begin{lemma}\label{lem.samesign}
For any $\lambda\in\R^n$,
$\cos{H(\lambda)}$ and $X(\lambda)$
(respectively, $\sin{H(\lambda)}$ and $Y(\lambda)$)
have the same sign. That is
\begin{equation}\label{eqn.cosX}
\cos H(\lambda)\lessgtr0~\Lra~X(\lambda)\lessgtr0,
\end{equation}
and
\begin{equation}\label{eqn.sinY}
\sin H(\lambda)\lessgtr0~\Lra~Y(\lambda)\lessgtr0.
\end{equation}
\end{lemma}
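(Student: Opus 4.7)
The plan is to exploit the complex factorization already hinted at in the introduction: $H(\lambda)$ is precisely the argument of $\prod_{i=1}^{n}(1+\sqrt{-1}\lambda_i)$, and I will identify $X(\lambda),Y(\lambda)$ as the real and imaginary parts of this product. To this end, I would first expand the product algebraically via the generating-function identity $\prod_{i=1}^{n}(1+t\lambda_i)=\sum_{k=0}^{n}t^{k}\sigma_k(\lambda)$ and set $t=\sqrt{-1}$. Since the powers $(\sqrt{-1})^{k}$ cycle through $1,\sqrt{-1},-1,-\sqrt{-1}$, grouping terms of the same parity yields, in view of the definitions \eref{eqn.Xdef} and \eref{eqn.Ydef},
\[\prod_{i=1}^{n}(1+\sqrt{-1}\lambda_i)=X(\lambda)+\sqrt{-1}\,Y(\lambda).\]

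Next, I would rewrite each factor in polar form. Since $\arctan\lambda_i\in(-\pi/2,\pi/2)$ there is no branch ambiguity, and $1+\sqrt{-1}\lambda_i=\sqrt{1+\lambda_i^{2}}\bigl(\cos(\arctan\lambda_i)+\sqrt{-1}\sin(\arctan\lambda_i)\bigr)$. Multiplying the $n$ factors and using the definition \eref{eqn.Hdef} of $H(\lambda)$, I obtain
\[\prod_{i=1}^{n}(1+\sqrt{-1}\lambda_i)=R(\lambda)\bigl(\cos H(\lambda)+\sqrt{-1}\sin H(\lambda)\bigr),\]
where $R(\lambda):=\prod_{i=1}^{n}\sqrt{1+\lambda_i^{2}}$.

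Equating real and imaginary parts of the two expressions above gives $X(\lambda)=R(\lambda)\cos H(\lambda)$ and $Y(\lambda)=R(\lambda)\sin H(\lambda)$. Since $R(\lambda)>0$ for every $\lambda\in\R^{n}$, both \eref{eqn.cosX} and \eref{eqn.sinY} follow at once, and in fact one gets the stronger statement that $X$ and $\cos H$ (respectively $Y$ and $\sin H$) agree in sign including the vanishing case. There is no genuine obstacle; the only minor subtlety is the absence of branch ambiguity in the polar representation, which is guaranteed by the fact that each factor $1+\sqrt{-1}\lambda_i$ has strictly positive real part and thus argument lying safely in $(-\pi/2,\pi/2)$.
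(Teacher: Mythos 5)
Your proof is correct and genuinely more direct than the paper's.

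The paper obtains \eref{eqn.cosX} by a topological argument: it counts the connected components of $\set{\lambda : \cos H(\lambda)\neq 0}$ and of $\set{\lambda : X(\lambda)\neq 0}$, shows each set has $\wt n+1$ components with the sign alternating between adjacent ones, and anchors the correspondence at $\lambda = 0$ where $X(0)=\cos H(0)=1$. This route requires a nontrivial intermediate claim — that the polynomial $t\mapsto X(t\lambda)$ has exactly $\wt n$ distinct real roots for $\lambda\in\Gamma^+$ — and is somewhat delicate. Your argument instead goes straight through the complex factorization $\prod_{i=1}^{n}(1+\sqrt{-1}\lambda_i)=X(\lambda)+\sqrt{-1}\,Y(\lambda)$, which is immediate from $\prod(1+t\lambda_i)=\sum_k t^k\sigma_k(\lambda)$ at $t=\sqrt{-1}$, together with the polar decomposition $\prod(1+\sqrt{-1}\lambda_i)=R(\lambda)\bigl(\cos H(\lambda)+\sqrt{-1}\sin H(\lambda)\bigr)$ with $R(\lambda)=\prod_i\sqrt{1+\lambda_i^2}>0$. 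Equating real and imaginary parts gives the sharper identities $X(\lambda)=R(\lambda)\cos H(\lambda)$ and $Y(\lambda)=R(\lambda)\sin H(\lambda)$, from which the sign statement is instantaneous — and these identities strengthen \eref{eqn.Y/X=T}--\eref{eqn.X/Y=C} by removing the division. The one point you rightly flag — no branch ambiguity since $\mathrm{Re}(1+\sqrt{-1}\lambda_i)=1>0$ forces each factor's argument into $(-\pi/2,\pi/2)$, matching $\arctan\lambda_i$ — is exactly what makes the polar step legitimate without any modular reduction. In short: your approach replaces the paper's connected-component count with an explicit formula that subsumes the lemma, and is preferable.
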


\begin{proof}
By \eref{eqn.Y/X=T}, it is clear that
\eref{eqn.cosX} and \eref{eqn.sinY} are equivalent,
thus we need only to prove the former.
To do this, the following assertions
(which can also be used as sketches of the proof)
will be adequate.
\begin{enumerate}[\quad($1^\circ$)]
\item
$\{\lambda\in\R^n|\cos H(\lambda)\neq0\}$
has exactly $\wt n+1$ connected components,
and in any two adjacent such components
$\cos H(\lambda)$ have different signs.
Here
\[
\wt n:=\wt n(n):=
\begin{cases}
n-1,~&\text{when $n$ is odd},\\
n,~&\text{when $n$ is even},
\end{cases}
\]
which will be used throughout this proof
and will not be mentioned again.

\item
Since $X(\lambda)=0$ $\Lra$ $\cos H(\lambda)=0$,
\emph{$\{\lambda\in\R^n|X(\lambda)\neq0\}$
also has exactly $\wt n+1$ connected components,
as same as those of $\{\lambda\in\R^n|\cos H(\lambda)\neq0\}$.}
Because, for any $\lambda\in\Gamma^+$,
$X(t\lambda)$ is a polynomial with respect to $t$ of $\wt n$ order,
and it has exactly $\wt n$ different real roots,
we know that $\{\lambda\in\R^n|X(\lambda)\neq0\}\cap\Gamma^+\cap(-\Gamma^+)$
also has exactly $\wt n+1$ connected components,
and in any two adjacent such components $X(\lambda)$ have different signs.
Thus, by smoothness of $X(\lambda)$ and $\cos H(\lambda)$,
we conclude that \emph{$X(\lambda)$ have different signs
in any two adjacent components of $\{\lambda\in\R^n|X(\lambda)\neq0\}$}.

\item
$X(0)=1=\cos H(0)$.
\end{enumerate}

To confirm ($1^\circ$), we need only to note that
\[|DH(\lambda)|
=\left|\left(\frac{1}{1+\lambda_1^2},...,
\frac{1}{1+\lambda_n^2}\right)\right|>0,\]
\begin{equation}
\cos H(\lambda)=0~\Lra~H(\lambda)=\frac{\pi}{2}+k\pi, k\in\Z,
\end{equation}
and there are exactly $\wt n$ different $k\in\Z$ such that
\[\frac{\pi}{2}+k\pi\in\left(-\frac{n\pi}{2},\frac{n\pi}{2}\right).\]

Now consider ($2^\circ$).
One can easily check that $X(t\lambda)$
is a polynomial with respect to $t$ of $\wt n$ order.
Thus it has at most $\wt n$ different complex roots.
In particular, when $\lambda\in\Gamma^+$,
$X(t\lambda)$ has exactly $\wt n$ different real roots.
Indeed, since
\[\lim_{t\ra-\infty}H(t\lambda)=-n\pi/2,\quad
\lim_{t\ra+\infty}H(t\lambda)=n\pi/2\]
and
\[\p_t(H(t\lambda))
=\sum_{i=1}^{n}\frac{\lambda_i}{1+(t\lambda_i)^2}
>0,~\forall t\in\R,\]
we see that the line $\{t\lambda\}_{t\in\R}$
intersects with all the level surfaces $L_{\Theta}$
($\Theta\in(-n\pi/2,n\pi/2)$) once and only once.
This implies that there are exactly $\wt n$ different real $t$
such that $\cos H(t\lambda)=0$.
On the other hand, since it follows from \eref{eqn.X/Y=C} that
\[X(\lambda)=0~\Lra~\cos H(\lambda)=0,\]
we thus conclude that the $\wt n$-order polynomial
$X(t\lambda)$ has exactly $\wt n$ different real roots,
which lie right in the $\wt n$ different level sets $L_\Theta$
($\Theta=\pi/2+k\pi\in(-n\pi/2,n\pi/2)$, $k\in\Z$), respectively.

Hence, $\{\lambda\in\R^n|X(\lambda)\neq0\}\cap\Gamma^+\cap(-\Gamma^+)$
has exactly $\wt n+1$ connected components,
and in any two adjacent such components $X(\lambda)$ have different signs.
Therefore, ($2^\circ$) is clear.

Assertion ($3^\circ$) is easy,
thus the proof of this lemma is completed.
\end{proof}

\medskip

Now set
\begin{eqnarray}\label{eqn.hXdef}
\wh X(\lambda)
&:=&\sum_{0\leq 2j\leq n}(-1)^j\cdot(2j)
\cdot\sigma_{2j}(\lambda)\nonumber\\
&=&-2\sigma_2(\lambda)+4\sigma_4(\lambda)-...,
\end{eqnarray}
and
\begin{eqnarray}\label{eqn.hYdef}
\wh Y(\lambda)
&:=&\sum_{0\leq 2j+1\leq n}(-1)^j\cdot(2j+1)
\cdot\sigma_{2j+1}(\lambda)\nonumber\\
&=&\sigma_1(\lambda)-3\sigma_3(\lambda)+5\sigma_5(\lambda)-...,
\end{eqnarray}
for any $\lambda\in\R^n$.
As a corollary of \lref{lem.samesign}, we obtain
\begin{corollary}\label{cor.XhY-YhX>0=>CY-SX>0}
For any $a\in\R^n$, we have
\[X(a)\wh Y(a)-Y(a)\wh X(a)>0
~\Ra~
\cos{H(a)}\;\wh Y(a)-\sin{H(a)}\;\wh X(a)>0.\]
\end{corollary}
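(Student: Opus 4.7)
My proof plan rests on identifying $X(a)+iY(a)$ as the complex product $(1+i\lambda_1)\cdots(1+i\lambda_n)$, an interpretation already highlighted in the introduction of the paper. Expanding this product and separating real and imaginary parts gives exactly the defining formulas \eref{eqn.Xdef} and \eref{eqn.Ydef}, so
\[
X(a)+i\,Y(a)=\prod_{j=1}^{n}(1+i\lambda_j).
\]
Taking modulus and argument, and recalling \eref{eqn.Hdef}, one obtains
\[
X(a)=R(a)\cos H(a),\qquad Y(a)=R(a)\sin H(a),
\]
where $R(a):=\prod_{j=1}^{n}\sqrt{1+\lambda_j^2}>0$. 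The sign identification of \lref{lem.samesign} is the qualitative shadow of this exact identity; what we need here is the quantitative version.

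Given this identification, the conclusion is immediate: substituting into the hypothesis gives
\[
X(a)\wh Y(a)-Y(a)\wh X(a)
=R(a)\bigl(\cos H(a)\,\wh Y(a)-\sin H(a)\,\wh X(a)\bigr),
\]
and since $R(a)>0$ for every $a\in\R^n$, the positivity of the left-hand side is equivalent to (in particular implies) the positivity of $\cos H(a)\,\wh Y(a)-\sin H(a)\,\wh X(a)$.

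I do not anticipate any real obstacle here. The only point worth checking carefully is the argument identification: one should note that $\arg(1+i\lambda_j)=\arctan\lambda_j\in(-\pi/2,\pi/2)$, so summing arguments gives exactly $H(a)\in(-n\pi/2,n\pi/2)$ without any $2\pi$-ambiguity, legitimizing the clean formulas $X(a)=R(a)\cos H(a)$ and $Y(a)=R(a)\sin H(a)$ on all of $\R^n$. Once this is in place, the corollary is a one-line consequence (and in fact an equivalence, stronger than the implication stated).
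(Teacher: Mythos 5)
Your proof is correct, and it takes a genuinely different and cleaner route than the paper's. The paper proves \cref{cor.XhY-YhX>0=>CY-SX>0} by first establishing \lref{lem.samesign} (that $\cos H$ and $X$, and likewise $\sin H$ and $Y$, share signs) via a topological argument about connected components of $\{\cos H\neq0\}$ and $\{X\neq0\}$, and then running a two-case argument ($\cos H(a)\neq0$ vs.\ $\cos H(a)=0$) using the ratio identities \eref{eqn.Y/X=T} and \eref{eqn.X/Y=C}. You instead observe that the complex identity
\[
X(a)+iY(a)=\prod_{j=1}^n(1+i a_j)=R(a)e^{iH(a)},\qquad R(a):=\prod_{j=1}^n\sqrt{1+a_j^2}>0,
\]
(legitimate because $\arg(1+ia_j)=\arctan a_j$ exactly, the real part being $1>0$, and a product of complex numbers in polar form multiplies moduli and adds arguments with no branch ambiguity) gives the \emph{quantitative} relations $X(a)=R(a)\cos H(a)$, $Y(a)=R(a)\sin H(a)$ globally on $\R^n$. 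Then $X\wh Y-Y\wh X=R\,(\cos H\,\wh Y-\sin H\,\wh X)$ with $R>0$, so the corollary is a one-line equivalence. This is strictly more information than \lref{lem.samesign} (whose same-sign statement is the qualitative shadow of your identity), it dispenses with the case split, and it simultaneously re-derives the ratio identities \eref{eqn.Y/X=T}--\eref{eqn.X/Y=C}. The only thing your route does not replicate is the paper's structural information (number of connected components, root-counting for $X(t\lambda)$), which the paper reuses elsewhere; but for this corollary your argument is preferable.
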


\begin{proof}
($1^\circ$)
If $\cos H(a)\neq0$, then it follows from \lref{lem.samesign} that
\[\frac{\cos{H(a)}}{X(a)}>0.\]
Thus, by \eref{eqn.Y/X=T}, we can deduce that
\begin{eqnarray*}
&~&\cos{H(a)}\;\wh Y(a)-\sin{H(a)}\;\wh X(a)\\
&=&\frac{\cos{H(a)}}{X(a)}\left(X(a)\wh Y(a)-\tan{H(a)}\;X(a)\wh X(a)\right)\\
&=&\frac{\cos{H(a)}}{X(a)}\left(X(a)\wh Y(a)-Y(a)\wh X(a)\right)\\
&>&0.
\end{eqnarray*}

($2^\circ$)
If $\cos H(a)=0$, then $\sin H(a)=\pm1\neq0$.
Thus we can employ a procedure similar to the above one.
That is, by using \[\frac{\sin{H(a)}}{Y(a)}>0\]
according to \lref{lem.samesign},
we can now deduce from \eref{eqn.X/Y=C} that
\begin{eqnarray*}
&~&\cos{H(a)}\;\wh Y(a)-\sin{H(a)}\;\wh X(a)\\
&=&\frac{\sin{H(a)}}{Y(a)}\left(\cot{H(a)}\;Y(a)\wh Y(a)-Y(a)\wh X(a)\right)\\
&=&\frac{\sin{H(a)}}{Y(a)}\left(X(a)\wh Y(a)-Y(a)\wh X(a)\right)\\
&>&0.
\end{eqnarray*}
This finishes the proof of the lemma.
\end{proof}

We discussed in \cref{cor.XhY-YhX>0=>CY-SX>0}
the sign of $X\wh Y-Y\wh X$ and its application.
In fact, for $X\wh Y-Y\wh X$,
we have established the following explicit formula,
which shows immediately that
$X(a)\wh Y(a)-Y(a)\wh X(a)>0$ for all $a\in\Gamma^+$.
\begin{lemma}\label{lem.XhY-YhX=SS}
For any $a\in\R^n$, we have
\[
X(a)\wh Y(a)-Y(a)\wh X(a)
=\sum_{k=1}^{n}
{\sum_{1\leq i_1,i_2,...,i_{k}\leq n}
{a_{i_1}^2a_{i_2}^2...a_{i_{k-1}}^2a_{i_k}}}.
\]
\end{lemma}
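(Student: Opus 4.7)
The plan is to exploit a complex-analytic generating function. With $\iota := \sqrt{-1}$, define $f(t) := \prod_{j=1}^{n}(1 + t a_j) = \sum_{k=0}^{n} \sigma_k(a)\, t^k$. Since $\iota^{2j} = (-1)^j$, evaluation at $t = \iota$ gives $X(a) + \iota Y(a) = f(\iota)$. Differentiating the generating function and multiplying by $t$ yields $\sum_{k} k\sigma_k(a) t^k = t f'(t)$, so at $t = \iota$ we get $\widehat X(a) + \iota \widehat Y(a) = \iota f'(\iota)$.

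The next step is to combine these to extract $X\widehat Y - Y\widehat X$. Expanding,
\[
(X - \iota Y)(\widehat X + \iota \widehat Y) = (X\widehat X + Y\widehat Y) + \iota (X \widehat Y - Y\widehat X),
\]
so $X\widehat Y - Y\widehat X = \operatorname{Im}\bigl[(X - \iota Y)(\widehat X + \iota \widehat Y)\bigr] = \operatorname{Im}\bigl[\iota f(-\iota) f'(\iota)\bigr]$. Here $f(\iota) f(-\iota) = \prod_j(1 + a_j^2) =: P$ is real and positive, and $f'(\iota)/f(\iota) = \sum_j a_j/(1 + \iota a_j)$, giving
\[
\iota f(-\iota) f'(\iota) = P\, \iota \sum_j \frac{a_j}{1 + \iota a_j} = P \sum_j \left[ \frac{a_j^2}{1+a_j^2} + \iota\frac{a_j}{1+a_j^2}\right].
\]
Taking imaginary parts,
\[
X(a)\widehat Y(a) - Y(a)\widehat X(a) = P \sum_{j=1}^n \frac{a_j}{1 + a_j^2} = \sum_{i=1}^n a_i \prod_{j \ne i}(1 + a_j^2).
\]

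The last step is purely combinatorial: expand each factor as $\prod_{j \ne i}(1 + a_j^2) = \sum_{S \subseteq [n]\setminus\{i\}} \prod_{j \in S} a_j^2$, set $k := |S| + 1$, and relabel $i_k := i$ with $\{i_1,\ldots,i_{k-1}\} := S$ to arrive at the stated right-hand side. The single source of friction is the multi-index notation on the right: one has to parse it so that the $k-1$ squared indices exhaust a subset of $[n]$ and the unsquared index $i_k$ runs over its complement, forcing all $k$ indices to be pairwise distinct. A purely algebraic alternative avoiding $\sqrt{-1}$ is an induction on $n$, based on the recurrences $X_{n+1} = X_n - a_{n+1} Y_n$, $Y_{n+1} = Y_n + a_{n+1} X_n$ together with their analogues for $\widehat X$, $\widehat Y$; these yield $F_{n+1} = (1 + a_{n+1}^2) F_n + a_{n+1}(X_n^2 + Y_n^2)$ for $F_n := X_n \widehat Y_n - Y_n \widehat X_n$, and $X_n^2 + Y_n^2 = \prod_{j \le n}(1 + a_j^2)$, from which the identity follows directly.
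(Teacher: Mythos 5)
Your proof is correct and takes a genuinely different route from the one in the paper. Writing $f(t):=\prod_j(1+ta_j)$ and reading off $X+\iota Y=f(\iota)$, $\widehat X+\iota\widehat Y=\iota f'(\iota)$ collapses the identity to
\[
X\widehat Y-Y\widehat X=\operatorname{Im}\bigl[\iota f(-\iota)f'(\iota)\bigr]
=\prod_j(1+a_j^2)\sum_j\frac{a_j}{1+a_j^2}
=\sum_{i=1}^n a_i\prod_{j\ne i}(1+a_j^2),
\]
after which the right-hand side of the lemma is just the expansion of the products, grouped by the cardinality $k$ of the support. The paper instead proves the identity by a direct combinatorial computation: it reindexes the double sum $\sum_{i,j}(-1)^{i+j}(2j-2i+1)\sigma_{2i}\sigma_{2j+1}$ along the diagonals $p=i+j$, proves a binomial identity $\sum_{q}(-1)^q(2q+1)\binom{2Q+1}{Q-q}=[Q=0]$, and derives a decomposition of $\sigma_j\sigma_k$ into the polynomials $S_L^M$ before recombining. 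Your generating-function argument replaces all three of those steps with a single logarithmic differentiation of $f$, and the alternative induction you sketch (via the recurrences for $X,Y,\widehat X,\widehat Y$ and $X_n^2+Y_n^2=\prod_{j\le n}(1+a_j^2)$) offers a purely algebraic version that also avoids the paper's machinery entirely. Both your routes are substantially shorter than the paper's; what they buy is brevity and transparency, while the paper's computation has the side benefit of exhibiting the decomposition $\sigma_j\sigma_k=\sum_h C^{j-h}_{j+k-2h}S^h_{j+k-h}$ explicitly, which it does not reuse elsewhere. One small caution, which you already flag: the multi-index summation $\sum_{1\le i_1,\dots,i_k\le n}$ in the statement must be read as a sum over $k$-subsets of $[n]$ with one distinguished unsquared index (so that $S_k^{k-1}(\mathds 1)=k\binom{n}{k}$), not as an unrestricted sum over $n^k$ ordered tuples; your reduction to $\sum_i a_i\prod_{j\ne i}(1+a_j^2)$ matches that intended reading.
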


\begin{proof}
See the Appendix in \sref{sec.appendix}.
(Since it is too lengthy, we attach it in the end of this paper.)
\end{proof}

\medskip

The notations $X,Y,\wh X$ and $\wh Y$ we introduced above
can all be viewed as ``global'' notations.
To understand better the special Lagrangian equation \eref{eqn.sle-af},
we need to introduce the following ``local'' notations
$c_k,\xi_k$ and their combinations.

For any $\Theta\in(-n\pi/2,n\pi/2)$
and any $k=0,1,...,n$,
let $c_k=c_k(\Theta)$ be the coefficients such that
\[\sum_{k=0}^{n}{c_k(\Theta)\sigma_k(\lambda)}
=\cos\Theta\;Y(\lambda)-\sin\Theta\;X(\lambda),
~\forall\lambda\in\R^n;\]
that is, we set
\begin{equation}\label{eqn.ckdef}
c_k:=c_k(\Theta):=
\begin{cases}
c_{2j}(\Theta):=(-1)^{j+1}\sin\Theta, & \mathrm{if}~k=2j,\\
c_{2j+1}(\Theta):=(-1)^{j}\cos\Theta, & \mathrm{if}~k=2j+1.
\end{cases}
\end{equation}

For any $\Theta\in(-n\pi/2,n\pi/2)$,
any $a\in\R^n$ and any $k=0,1,...,n$, we define
\begin{equation}\label{eqn.xikdef}
\xi_k:=\xi_k(a):=\xi_k(\Theta,a):=
\begin{cases}
\ol\xi_k(a), & \mathrm{if}~c_k(\Theta)>0,\\
\ul\xi_k(a), & \mathrm{if}~c_k(\Theta)\leq 0.
\end{cases}
\end{equation}
By definition and \lref{lem.xik}, it is easy to see that
\begin{equation}\label{Xikck<=xikck}
\Xi_k(a,x)c_k(\Theta)\leq\xi_k(\Theta,a)c_k(\Theta),
\end{equation}
and
\begin{equation}\label{xikck>=k/nck}
\xi_k(\Theta,a)c_k(\Theta)\geq\frac{k}{n}c_k(\Theta),
\end{equation}
for all $k=0,1,...,n$.

\medskip

For any $\Theta\in(-n\pi/2,n\pi/2)$
and any $\lambda\in\R^n$, set
\begin{eqnarray}\label{eqn.Zdef}
Z(\lambda)&:=&Z(\Theta,\lambda)
:=\sum_{k=0}^{n}{c_k(\Theta)\sigma_k(\lambda)}\nonumber\\
&:=&\cos\Theta\;Y(\lambda)-\sin\Theta\;X(\lambda)\nonumber\\
&=&\cos\Theta\sum_{0\leq 2j+1\leq n}(-1)^j\sigma_{2j+1}(\lambda)\nonumber\\
&~&-\sin\Theta\sum_{0\leq 2j\leq n}(-1)^j\sigma_{2j}(\lambda),
\end{eqnarray}
and
\begin{eqnarray}\label{eqn.hZdef}
\wh Z(\lambda)&:=&\wh Z(\Theta,\lambda)
:=\sum_{k=0}^{n}{kc_k(\Theta)\sigma_k(\lambda)}\nonumber\\
&:=&\cos\Theta\;\wh Y(\lambda)-\sin\Theta\;\wh X(\lambda)\nonumber\\
&=&\cos\Theta\sum_{0\leq 2j+1\leq n}(-1)^j(2j+1)\sigma_{2j+1}(\lambda)\nonumber\\
&~&-\sin\Theta\sum_{0\leq 2j\leq n}(-1)^j(2j)\sigma_{2j}(\lambda).
\end{eqnarray}
We see, for any $a\in\R^n$, that
\begin{equation}\label{eqn.Z(ta)}
Z(ta)=\sum{c_k\sigma_k(ta)}
=\sum{c_k\sigma_k(a)t^k}
=\cos\Theta\;Y(ta)-\sin\Theta\;X(ta),
\end{equation}
\begin{equation*}
\wh Z(ta)=\sum{kc_k\sigma_k(a)t^k}
=\cos\Theta\;\wh Y(ta)-\sin\Theta\;\wh X(ta),
\end{equation*}
and
\begin{equation*}
\wh Z(ta)=t\frac{d}{dt}(Z(ta)).
\end{equation*}

As a corollary of \lref{lem.XhY-YhX=SS}, we have
\begin{corollary}\label{cor.xikcksk>0}
Let $a\in\Gamma^+\cap L_\Theta$ with $0<\Theta<n\pi/2$.
Then
\[\sum_{k=0}^{n}{kc_k(\Theta)\sigma_k(a)}>0,\]
and
\[\sum_{k=0}^{n}{\xi_k(\Theta,a)c_k(\Theta)\sigma_k(a)}>0.\]
\end{corollary}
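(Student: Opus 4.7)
The plan is to derive both inequalities from the two key facts already established: Lemma~\ref{lem.XhY-YhX=SS} (which gives positivity of $X\wh Y-Y\wh X$ on $\Gamma^+$) and the bound \eref{xikck>=k/nck} comparing $\xi_k c_k$ with $(k/n)c_k$.

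For the first inequality, I would argue as follows. Since $a\in\Gamma^+$, Lemma~\ref{lem.XhY-YhX=SS} expresses $X(a)\wh Y(a)-Y(a)\wh X(a)$ as a sum of products of squares and positive factors $a_i$, which is strictly positive. Then Corollary~\ref{cor.XhY-YhX>0=>CY-SX>0} applied with this $a$ yields
\[
\cos H(a)\,\wh Y(a)-\sin H(a)\,\wh X(a)>0.
\]
The hypothesis $a\in L_\Theta$ means $H(a)=\Theta$, so the left-hand side is exactly $\wh Z(\Theta,a)=\sum_{k=0}^{n}kc_k(\Theta)\sigma_k(a)$ by \eref{eqn.hZdef}, establishing the first claim.

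For the second inequality, I would exploit the pointwise comparison \eref{xikck>=k/nck}, namely $\xi_k(\Theta,a)c_k(\Theta)\geq (k/n)c_k(\Theta)$ for every $k$, which holds regardless of the sign of $c_k$ precisely because $\xi_k$ is defined as $\ol\xi_k$ when $c_k>0$ and as $\ul\xi_k$ when $c_k\leq 0$. Since $a\in\Gamma^+$, every $\sigma_k(a)>0$, so we may multiply by $\sigma_k(a)$ preserving the inequality and sum over $k$ to obtain
\[
\sum_{k=0}^{n}\xi_k(\Theta,a)c_k(\Theta)\sigma_k(a)
\geq \frac{1}{n}\sum_{k=0}^{n}kc_k(\Theta)\sigma_k(a)
= \frac{1}{n}\wh Z(\Theta,a)>0,
\]
where the last strict inequality is exactly the first part of the corollary.

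I do not expect a serious obstacle; the only subtlety is the case analysis on the sign of $c_k$ when invoking \eref{xikck>=k/nck}, but this is already built into the definition \eref{eqn.xikdef} of $\xi_k$, and the positivity $\sigma_k(a)>0$ on $\Gamma^+$ lets the summation proceed cleanly. The real content is thus shifted onto Lemma~\ref{lem.XhY-YhX=SS}, whose proof is relegated to the Appendix; granting that, the corollary reduces to a two-line argument.
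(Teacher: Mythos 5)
Your proof is correct and follows essentially the same path as the paper's: both invoke Lemma~\ref{lem.XhY-YhX=SS} to get $X(a)\wh Y(a)-Y(a)\wh X(a)>0$, pass through Corollary~\ref{cor.XhY-YhX>0=>CY-SX>0} together with $H(a)=\Theta$ to conclude $\wh Z(\Theta,a)=\sum_k kc_k\sigma_k(a)>0$, and then use \eref{xikck>=k/nck} multiplied by $\sigma_k(a)>0$ to obtain $\sum_k\xi_kc_k\sigma_k\geq\frac{1}{n}\sum_k kc_k\sigma_k>0$. The only cosmetic difference is that you spell out the two conclusions sequentially while the paper presents them in a single chain of inequalities.
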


\begin{proof}
By \lref{lem.XhY-YhX=SS}, we have
\[X(a)\wh Y(a)-Y(a)\wh X(a)>0.\]
Since $a\in L_\Theta$ means that $H(a)=\Theta$,
by \eqref{xikck>=k/nck} and \cref{cor.XhY-YhX>0=>CY-SX>0},
we thus deduce that
\[
\sum{\xi_kc_k\sigma_k}
\geq\frac{1}{n}\sum{kc_k\sigma_k}
=\frac{1}{n}\left(\cos\Theta\;\wh Y(a)
-\sin\Theta\;\wh X(a)\right)
>0.
\]
The proof thereby is completed.
\end{proof}

\medskip

For any $\Theta\in(0,n\pi/2)$
and any $a\in L_\Theta\cap\Gamma^+$,
we now define
\begin{equation}\label{eqn.mdef}
m(\Theta,a):=\frac{\sum{kc_k\sigma_k}}{\sum{\xi_kc_k\sigma_k}}
:=\frac{\sum_{k=0}^{n}{kc_k(\Theta)\sigma_k(a)}}
{\sum_{k=0}^{n}{\xi_k(\Theta,a)c_k(\Theta)\sigma_k(a)}}.
\end{equation}
By the proof of \cref{cor.xikcksk>0},
we see immediately that
\begin{equation}\label{eqn.0<m<=n}
0<m(\Theta,a)\leq n,
~\forall a\in L_\Theta\cap\Gamma^+,
~\forall \Theta\in(0,n\pi/2).
\end{equation}
Thus for any $A\in\mathscr{A}_\Theta$,
we have \[2<m(\Theta,a)\leq n.\]

To construct subsolutions of \eref{eqn.sle},
we need $m(\Theta,\lambda(A))>2$.
Though a large class of matrices satisfying this,
there are still some exceptions.
To see this, we now give some examples.

First we know from \eref{eqn.xikrho1=k/n} that
$\xi_k(\tan(\Theta/n)\mathds{1})=k/n$
and therefore
\[m\Big(\Theta,\lambda\big(\tan(\Theta/n)\,I_n\big)\Big)=n>2.\]

Next we consider \eref{eqn.sle} in $\R^3$ or $\R^4$
with the Lagrangian phase $\Theta=\pi$.
Since $A\in\mathscr{A}^0_\pi$ means that
$a=\lambda(A)$ satisfies $H(a)=\pi$,
we have $\sin H(a)=0$.
By \lref{lem.samesign}, we see that
$c_1(a)\sigma_1(a)-c_3(a)\sigma_3(a)=Y(a)=0$,
Thus we obtain
\begin{eqnarray*}
m(\pi,a)
=\frac{c_1\sigma_1-3c_3\sigma_3}
{\xi_1c_1\sigma_1-\xi_3c_3\sigma_3}
=\frac{1-3}
{\ul\xi_1-\ol\xi_3}
>2,
\end{eqnarray*}
which implies that $\mathscr{A}_\pi=\mathscr{A}^0_\pi$
in three or four dimension.

Finally, we give an example in $\R^5$
to show how the distribution of the values of $m(\Theta,a)$
looks like in general.
Let
\[a_\ve:=\left(\tan\left(\frac{\pi}{3}-2\ve\right),
\tan\left(\frac{\pi}{3}-\ve\right),
\tan\left(\frac{\pi}{3}\right),
\tan\left(\frac{\pi}{3}+\ve\right),
\tan\left(\frac{\pi}{3}+2\ve\right)\right),\]
for any $0<\ve<\pi/12$.
Then $a_\ve\in\Gamma^+$
and $H(a_\ve)=5\pi/3\in(3\pi/2,2\pi)$.
Furthermore, we can calculate
(with the help of the computer, for example) that
\[m(\ve):=m(5\pi/3,a_\ve)
=\frac{4\sqrt{3}\cos(4\ve)+4\sqrt{3}\cos(2\ve)+2\sqrt{3}}
{2\sqrt{3}\cos(4\ve)+2\sin(6\ve)+2\sin(2\ve)+3\sin(4\ve)}.\]
Thus $m(\ve)$ is deceasing in $[0,\pi/12]$
with $m(0)=5$, $m(0.2068)\approx2$ and
\[m(0.2618)\approx m(\pi/12)
=\frac{16+4\sqrt{3}}{13}\approx1.7637.\]
This indicates that
in a wide strip around the ray $\set{k\mathds{1}}$,
$m(\Theta,a)$ is larger than $2$ in general;
But when $a$ is close to the boundary of $\Gamma^+$,
$m(\Theta,a)$ appears to be less than $2$.
We would like to mention that we have done a lot of experiments
like the one above, all the results of them give us similar conclusions
even when the Lagrangian phase is subcritical.
For brevity, we will not add them here any more.

\bigskip

Now we study the properties of the polynomial $Z(ta)$,
which will play a crucial role in the construction
of the subsolutions of \eref{eqn.sle}.
First, we have the following basic lemma.
\begin{lemma}\label{lem.S-pN-cN}
For any fixed $a\in L_\Theta\cap\Gamma^+$
with $(n-2)\pi/2\leq\Theta<n\pi/2$,
$Z(ta)$ is a polynomial with respect to $t$ of $N$ order,
and with the leading coefficient $c_N>0$, where
\begin{equation}\label{eqn.Ndef}
N:=N(n,\Theta):=
\begin{cases}
n-1, &\mathrm{when}~\Theta=(n-2)\pi/2,\\
n,   &\mathrm{when}~(n-2)\pi/2<\Theta<n\pi/2,
\end{cases}
\end{equation}
which will be used only in this sense throughout this paper.
\end{lemma}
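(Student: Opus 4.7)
My plan is to expand $Z(ta)$ via \eref{eqn.Z(ta)} as
$$
Z(ta) \;=\; \sum_{k=0}^{n} c_k(\Theta)\,\sigma_k(a)\,t^k,
$$
and identify the highest non-vanishing coefficient. Since $a\in L_\Theta\cap\Gamma^+$ forces $\sigma_k(a)>0$ for every $0\leq k\leq n$, the degree of $Z(ta)$ in $t$ equals the largest $k$ with $c_k(\Theta)\neq 0$, and the sign of the leading coefficient of $Z(ta)$ as a polynomial in $t$ agrees with the sign of $c_N(\Theta)$.

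The cleanest way to carry out the sign analysis is to observe the identity
$$
c_k(\Theta) \;=\; \sin\!\left(\frac{k\pi}{2}-\Theta\right),
\qquad k=0,1,\ldots,n,
$$
which is a one-line verification from the definition \eref{eqn.ckdef} by splitting on the parity of $k$ and applying the angle-subtraction formula (the even case reduces to $-\cos(j\pi)\sin\Theta=(-1)^{j+1}\sin\Theta$, and the odd case to $\sin((2j+1)\pi/2)\cos\Theta=(-1)^j\cos\Theta$). With this identity the theorem reduces to two trivial checks. If $(n-2)\pi/2<\Theta<n\pi/2$, then $n\pi/2-\Theta\in(0,\pi)$, so $c_n(\Theta)>0$; hence $N=n$ and the leading coefficient is positive. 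If $\Theta=(n-2)\pi/2$, then $n\pi/2-\Theta=\pi$ makes $c_n(\Theta)=0$, while $(n-1)\pi/2-\Theta=\pi/2$ makes $c_{n-1}(\Theta)=1$; hence $N=n-1$ and again the leading coefficient is positive.

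There is no serious obstacle here. The only point that could be mishandled is the critical case: one must argue that $c_n(\Theta)=0$ there, which is immediate from the identity, or equivalently from the fact that at $\Theta=(n-2)\pi/2$ exactly one of $\sin\Theta$, $\cos\Theta$ vanishes depending on the parity of $n$, and that vanishing factor is precisely the one appearing in $c_n$. If one wishes to bypass the trigonometric identity entirely, the same conclusion follows by a direct four-fold case split on the parities of $n$ and $k$; this is slightly bulkier but entirely routine. In either route, the positivity of $\sigma_N(a)$ supplied by $a\in\Gamma^+$ is essential, since the lemma's conclusion ``$c_N>0$'' is really a statement about the sign of the leading coefficient $c_N(\Theta)\sigma_N(a)$ of the polynomial $Z(ta)$.
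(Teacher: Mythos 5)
Your proof is correct, and it accomplishes the same goal as the paper's—checking the sign of $c_N(\Theta)$ directly from the definition \eref{eqn.ckdef}—but by a cleaner route. The paper verifies $c_N>0$ and (in the critical case) $c_n=0$ via a four-fold case split on the parities of $n$ and $\Theta$; you instead observe the unifying identity $c_k(\Theta)=\sin(k\pi/2-\Theta)$, which collapses all the cases into two one-line checks. This buys economy and makes the structure transparent: the critical threshold $\Theta=(n-2)\pi/2$ is exactly where $n\pi/2-\Theta$ hits $\pi$, killing $c_n$. You also make explicit a point the paper leaves implicit, namely that the conclusion about the degree of $Z(ta)$ needs $\sigma_N(a)\neq0$, which is what $a\in\Gamma^+$ supplies; the leading coefficient of $Z(ta)$ is $c_N(\Theta)\sigma_N(a)$, not $c_N$ alone. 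Both arguments are elementary and equivalent in substance; yours is tidier.
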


\begin{proof}
The proof is elementary and hidden in the definition of $c_k$:
\[\begin{cases}
c_{2j}(\Theta):=(-1)^{j+1}\sin\Theta, &\mathrm{if}~k=2j,\\
c_{2j+1}(\Theta):=(-1)^{j}\cos\Theta, &\mathrm{if}~k=2j+1.
\end{cases}\]
For the reader's convenience, we check them case by case.

($1^\circ$)
For $\Theta=(n-2)\pi/2$,
if $n=2j$, then $\Theta=(n-2)\pi/2=(j-1)\pi$ and
\[\begin{cases}
c_n(\Theta)=c_{2j}(\Theta)=(-1)^{j+1}\sin\Theta=0,\\
c_{n-1}(\Theta)=c_{2(j-1)+1}(\Theta)=(-1)^{j-1}\cos\Theta=1>0;
\end{cases}\]
if $n=2j+1$, then $\Theta=(n-2)\pi/2=\pi/2+(j-1)\pi$ and
\[\begin{cases}
c_n(\Theta)=c_{2j+1}(\Theta)=(-1)^{j}\cos\Theta=0,\\
c_{n-1}(\Theta)=c_{2j}(\Theta)=(-1)^{j+1}\sin\Theta=1>0.
\end{cases}\]

($2^\circ$)
For $(n-2)\pi/2<\Theta<n\pi/2$,
if $n=2j$, then $(j-1)\pi<\Theta<j\pi$ and
\[c_n(\Theta)=c_{2j}(\Theta)=(-1)^{j+1}\sin\Theta>0;\]
if $n=2j+1$, then $\pi/2+(j-1)\pi<\Theta<\pi/2+j\pi$ and
\[c_n(\Theta)=c_{2j+1}(\Theta)=(-1)^{j}\cos\Theta>0.\]
This completes the proof of the lemma.
\end{proof}

By definition, it is easy to see that $Z(a)=0$.
This is to say that $t=1$ is a root of $Z(ta)$.
The following lemma asserts that it is indeed a simple root.
\begin{lemma}\label{lem.real-simple-root-1}
For any fixed $a\in\Gamma^+$,
$t=1$ is a simple root of $Z(ta)=0$.
\end{lemma}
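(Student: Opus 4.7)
The plan is to reduce simplicity of the root $t=1$ to the strict positivity of $\frac{d}{dt}Z(ta)\big|_{t=1}$, and then to identify this derivative as $\wh Z(a)$, whose sign is already accessible from \lref{lem.XhY-YhX=SS} and \cref{cor.XhY-YhX>0=>CY-SX>0}. Concretely, from the expansion $Z(ta)=\sum_{k=0}^{n}c_k\sigma_k(a)\,t^k$, differentiating term-by-term gives
\[
\frac{d}{dt}Z(ta)\bigg|_{t=1}=\sum_{k=0}^{n}k\,c_k\sigma_k(a)=\wh Z(a),
\]
which is consistent with the identity $\wh Z(ta)=t\,\frac{d}{dt}Z(ta)$ recorded just before the lemma. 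So the task reduces to showing $\wh Z(a)>0$ whenever $a\in\Gamma^+$ (with the tacit understanding that $a\in L_\Theta$, which is what makes $Z(a)=0$ in the first place and therefore makes the very notion of a simple root meaningful).

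For the key sign estimate, I would chain the two preceding results. By \lref{lem.XhY-YhX=SS},
\[
X(a)\wh Y(a)-Y(a)\wh X(a)=\sum_{k=1}^{n}\sum_{1\le i_1,\ldots,i_k\le n}a_{i_1}^2\cdots a_{i_{k-1}}^2 a_{i_k},
\]
and every summand on the right is strictly positive because $a\in\Gamma^+$. Feeding this into \cref{cor.XhY-YhX>0=>CY-SX>0} yields $\cos H(a)\,\wh Y(a)-\sin H(a)\,\wh X(a)>0$, and since $H(a)=\Theta$, this last expression is exactly $\wh Z(a)$ by \eref{eqn.hZdef}. Hence $\wh Z(a)>0$, so the derivative of $Z(ta)$ at $t=1$ is strictly positive, and $t=1$ cannot be a multiple root.

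I do not foresee any genuine obstacle, as the present lemma is essentially a clean packaging of the two results just cited: the algebraic identity of \lref{lem.XhY-YhX=SS} together with the sign-transfer of \cref{cor.XhY-YhX>0=>CY-SX>0} hands us $\wh Z(a)>0$ directly, and simplicity of the root is immediate from nonvanishing of a derivative. The only subtle point to flag is the implicit use of $a\in L_\Theta$; without it, $t=1$ need not be a root at all, and the lemma should be read as asserting that whenever $t=1$ is a root of $Z(ta)$, it is simple.
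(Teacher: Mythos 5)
Your proof is correct and takes essentially the same route as the paper's: both reduce simplicity of $t=1$ to the nonvanishing of $\frac{d}{dt}Z(ta)\big|_{t=1}=\wh Z(a)$, with the only difference being that the paper cites \cref{cor.xikcksk>0} directly while you inline its proof (the chain through \lref{lem.XhY-YhX=SS} and \cref{cor.XhY-YhX>0=>CY-SX>0}). Your flag about the tacit hypothesis $a\in L_\Theta$ (or, more precisely, $H(a)\in\Theta+\pi\Z$, which is exactly when $Z(a)=0$) is a fair reading of the paper's implicit setup and does not affect the argument, since nonvanishing of the derivative is all that is needed.
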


\begin{proof}
Invoking the following lemma from algebra:
\begin{enumerate}[\quad]
\item \textsl{(Derivative criterion for simple root)}
\emph{Let $p(x)$ be a polynomial with real coefficients.
Then $x_0$ is a simple root of $p(x)$ if and only if
\[p(x_0)=0\quad\text{and}\quad p'(x_0)\neq0;\]}
\end{enumerate}
since $Z(a)=0$, to show that $t=1$ is a simple root of $Z(ta)=0$,
we now need only to check that
\[\frac{d}{dt}(Z(ta))\Big{|}_{t=1}\neq0.\]
This is evidently true by \cref{cor.xikcksk>0}, since
\[\frac{d}{dt}(Z(ta))\Big{|}_{t=1}=\sum{kc_k\sigma_k(a)}.\]
\end{proof}

In fact, we can prove easily the following lemma.
\begin{lemma}\label{lem.real-simple-root}
For any fixed $a\in L_\Theta\cap\Gamma^+$
with $(n-2)\pi/2\leq\Theta<n\pi/2$,
all the roots of $Z(ta)=0$ are real and simple.
\end{lemma}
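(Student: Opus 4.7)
The plan is to exploit the complex-analytic origin of $X$, $Y$, and $H$. Observe that
\[
\prod_{i=1}^{n}\bigl(1+\sqrt{-1}\,\lambda_i\bigr)
= X(\lambda)+\sqrt{-1}\,Y(\lambda)
= r(\lambda)\,e^{\sqrt{-1}\,H(\lambda)},
\]
where $r(\lambda):=\prod_{i=1}^{n}\sqrt{1+\lambda_i^2}>0$. Taking the imaginary part of $e^{-\sqrt{-1}\,\Theta}$ times both sides yields the clean identity
\[
Z(\lambda)
= \cos\Theta\,Y(\lambda)-\sin\Theta\,X(\lambda)
= r(\lambda)\sin\bigl(H(\lambda)-\Theta\bigr)
\]
on all of $\R^n$. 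Thus the zeros of $Z(ta)$ correspond exactly to the values of $t$ for which $H(ta)\in\Theta+\pi\Z$.

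Next I would use strict monotonicity along the ray $\{ta:t\in\R\}$. For $a\in\Gamma^+$, the map $t\mapsto H(ta)=\sum_{i}\arctan(ta_i)$ has derivative $\sum_{i}a_i/(1+t^2 a_i^2)>0$, so it is a smooth strictly increasing bijection from $\R$ onto $(-n\pi/2,n\pi/2)$. Consequently, $Z(ta)=0$ has exactly one real solution for each integer $k$ satisfying $\Theta+k\pi\in(-n\pi/2,n\pi/2)$.

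A direct count shows there are exactly $n-1$ such integers when $\Theta=(n-2)\pi/2$ (namely $k=-n+2,\ldots,0$), and exactly $n$ when $(n-2)\pi/2<\Theta<n\pi/2$ (namely $k=-n+1,\ldots,0$). In both cases this count equals $N=N(n,\Theta)$ from \lref{lem.S-pN-cN}. Since that lemma guarantees $Z(ta)$ is a polynomial in $t$ of degree exactly $N$ with positive leading coefficient $c_N$, the $N$ distinct real zeros I have located must account for all roots counted with multiplicity, so every root is real and simple.

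The only delicate step I anticipate is the boundary bookkeeping in the critical case $\Theta=(n-2)\pi/2$, where one must verify that the borderline values $\Theta+k\pi=\pm n\pi/2$ are correctly excluded from the open interval (corresponding to $k=1$ and $k=-n+1$). Once the identity $Z=r\sin(H-\Theta)$ is in hand, the rest of the proof reduces to monotonicity of $H$ along a ray and a degree count matching \lref{lem.S-pN-cN}; no further analytic work is needed, and this also subsumes \lref{lem.real-simple-root-1} as the special case $k=0$.
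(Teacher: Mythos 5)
Your proposal is correct and follows essentially the same path as the paper: identify the zeros of $Z(ta)$ with the preimages of $\Theta+k\pi$ under the strictly increasing map $t\mapsto H(ta)$, count them to get exactly $N$ real solutions, and compare with the degree $N$ from \lref{lem.S-pN-cN}. The one genuine improvement you offer is deriving the key equivalence $Z(ta)=0\Llra H(ta)\in\Theta+\pi\Z$ from the exact identity $Z(\lambda)=r(\lambda)\sin\bigl(H(\lambda)-\Theta\bigr)$ with $r(\lambda)=\prod_i\sqrt{1+\lambda_i^2}$, which is cleaner than the paper's passage through $\tan H(ta)=\tan\Theta$ (the latter requires an implicit side argument when $\cos H$ or $\cos\Theta$ vanishes, though both conclusions are the same).
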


\begin{proof}
Because $Z(ta)\triangleq\cos\Theta\;Y(ta)-\sin\Theta\;X(ta)$,
we see that
\[Z(ta)=0
~\Lra~\tan H(ta)=\tan\Theta
~\Lra~H(ta)=\Theta+k\pi.\]
Thus it is clear that
there are exactly $N$ different $t\in\R$ such that
\[-n\pi/2<H(a)=\Theta+k\pi<n\pi/2\]
for some $k\in\Z$,
and that each such $t$ is the root of $Z(ta)=0$.
Since $Z(ta)$ is a polynomial with respect to $t$ of $N$ order,
according to \lref{lem.S-pN-cN},
this shows that
all the roots of $Z(ta)=0$ are real and simple.
\end{proof}

We remark that, using the same method,
\lref{lem.real-simple-root} can be generalized as:
\emph{for any fixed $a\in\Gamma^+$,
all the roots of $Z(ta)=0$ are real and simple.}

\bigskip

The positiveness of $Z(ta)$ and its derivatives,
for each $t\in(1,+\infty)$,
is crucial in constructing the subsolutions of \eref{eqn.sle}.
This is the main reason why we restrict ourselves
in the cases that the Lagrangian phases $\Theta$
are critical and supercritical.
We will now discuss these problems in the following lemma.
\begin{lemma}\label{lem.Z(ta)>0}
Suppose $a\in L_\Theta\cap\Gamma^+$
with $(n-2)\pi/2\leq\Theta<n\pi/2$.
Then
\[\frac{d}{dt}(Z(ta))
=\sum_{k=0}^{n}{kc_k(\Theta)\sigma_k(a)t^{k-1}}
>0,~\forall t\geq1.\]
Furthermore, we have
\[Z(a)=\sum_{k=0}^{n}{c_k(\Theta)\sigma_k(a)}=0,\]
\[Z(ta)=\sum_{k=0}^{n}{c_k(\Theta)\sigma_k(a)t^k}>0,~\forall t>1,\]
and
\[\frac{d^k}{dt^k}(Z(ta))>0,
~\forall t\geq1,~\forall 1\leq k\leq N.\]
\end{lemma}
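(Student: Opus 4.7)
The plan is to treat $p(t):=Z(ta)=\sum_{k=0}^{N}c_k(\Theta)\sigma_k(a)t^k$ as a real polynomial of degree $N$ in $t$ and extract every assertion of the lemma from its root structure. By \lref{lem.S-pN-cN} the leading coefficient $c_N\sigma_N(a)$ is strictly positive (note $\sigma_N(a)>0$ since $a\in\Gamma^+$), and by \lref{lem.real-simple-root} all $N$ of its roots are real and simple.

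First I would dispatch the identity $Z(a)=0$. Since $a\in L_\Theta$ means $H(a)=\Theta$: if $\cos\Theta\neq0$ then $X(a)\neq0$ by \lref{lem.samesign}, so \eref{eqn.Y/X=T} combined with $H(a)=\Theta$ gives $Y(a)/X(a)=\tan\Theta$ and hence $Z(a)=\cos\Theta\,Y(a)-\sin\Theta\,X(a)=0$; if $\cos\Theta=0$ then $\cos H(a)=0$ forces $X(a)=0$ via \lref{lem.samesign}, so again $Z(a)=-\sin\Theta\,X(a)=0$.

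The heart of the argument is to show that $t=1$ is the \emph{largest} real root of $p$. For $a\in\Gamma^+$ and $t>0$,
\[
\frac{d}{dt}H(ta)=\sum_{i=1}^{n}\frac{a_i}{1+(ta_i)^2}>0,
\]
so $t\mapsto H(ta)$ is strictly increasing. Hence $H(ta)>H(a)=\Theta$ for every $t>1$, while $H(ta)<n\pi/2\leq\Theta+\pi$ by the critical/supercritical hypothesis $\Theta\geq(n-2)\pi/2$. Consequently $\Theta<H(ta)<\Theta+\pi$ on $(1,+\infty)$, so $H(ta)\neq\Theta+k\pi$ for every $k\in\Z$. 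As recorded in the proof of \lref{lem.real-simple-root}, $p(t)=0\Leftrightarrow H(ta)=\Theta+k\pi$ for some $k\in\Z$, so $p$ has no zero in $(1,+\infty)$. Combined with $c_N\sigma_N(a)>0$, this forces $p(t)>0$ for every $t>1$.

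Finally, the positivity of every derivative on $[1,+\infty)$ drops out of an iterated Rolle argument. Since $p$ has $N$ distinct real roots with maximum $t=1$, Rolle's theorem places exactly one root of $p'$ strictly between each consecutive pair of roots of $p$; in particular the largest root of $p'$ lies in the open interval immediately preceding $1$ and is therefore strictly less than $1$. Iterating, for every $1\leq k\leq N$ the polynomial $p^{(k)}$ has degree $N-k$, positive leading coefficient, and all its real roots strictly below $1$, so $p^{(k)}(t)>0$ on $[1,+\infty)$; the case $k=1$ is the first displayed inequality of the statement. The subtle step is pinning down $t=1$ as the greatest real root of $p$, which is precisely where the hypothesis $\Theta\geq(n-2)\pi/2$ is indispensable; everything else is bookkeeping.
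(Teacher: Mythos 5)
Your proof is correct, and its core step --- that $t=1$ is the largest real root of $Z(ta)$ because $t\mapsto H(ta)$ is strictly increasing and, under the critical/supercritical hypothesis, stays in $(\Theta,\Theta+\pi)$ for $t>1$ --- is precisely the ``easy point of view'' that the paper itself records in the Remark following its own proof of this lemma. The paper's main proof of $Z(ta)>0$ on $(1,\infty)$ instead proceeds by a case analysis on the parity of $n$ and on whether $\Theta=(n-2)\pi/2$ or $\Theta>(n-2)\pi/2$, invoking \lref{lem.samesign} and the monotonicity of $\tan$ or $\cot$ in each subcase; both arguments ultimately rest on \lref{lem.S-pN-cN} and \lref{lem.real-simple-root}. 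Where you diverge more substantively is in the derivative inequalities: the paper applies a slightly modified version of \lref{lem.P'(x)>0} and handles the endpoint $t=1$ by quoting \cref{cor.xikcksk>0}, which in turn relies on the appendix identity \lref{lem.XhY-YhX=SS}; your iterated Rolle argument shows directly that every $p^{(k)}$ ($1\le k\le N$) has all its roots strictly below $1$ and positive leading coefficient, so $p^{(k)}(1)>0$ drops out without appealing to that corollary. Your route is more economical and self-contained at this point of the paper; the paper's longer trigonometric case analysis is preferred by the authors because it makes the sign structure of $X,Y$ along the ray $\{ta\}$ explicit, but both arguments are sound.
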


To prove \lref{lem.Z(ta)>0},
we first introduce the following
simple principle concerning polynomials.
\begin{lemma}\label{lem.P'(x)>0}
Suppose $P(x)$ is a polynomial in $\R$, $P(1)>0$ and
all the roots of $P(x)$ are real and less than $1$.
Then $P'(x)>0$, $\forall x\geq1$.
Furthermore, we have
$\dps\frac{d^k}{dx^k}P(x)>0$,
$\forall k=1,2,...,n$,
$\forall x\geq1$.
\end{lemma}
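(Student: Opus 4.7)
The plan is to exploit the factorization of $P$ over $\mathbb{R}$ together with an induction on the degree. Assuming $\deg P = n$, write $P(x) = c\prod_{i=1}^{n}(x - r_i)$ with $r_i \in \mathbb{R}$ and $r_i < 1$ (repeated according to multiplicity). Plugging in $x=1$ gives $P(1) = c\prod_i (1 - r_i)$; since every factor $(1 - r_i)$ is positive and $P(1) > 0$, we conclude $c > 0$. The same argument shows $P(x) > 0$ for every $x \geq 1$, which will be used as a denominator below.

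Next, for the first derivative at $x \geq 1$, I would use logarithmic differentiation: on the set where $P$ is nonzero we have
\[
\frac{P'(x)}{P(x)} = \sum_{i=1}^{n} \frac{1}{x - r_i}.
\]
For $x \geq 1 > r_i$, every summand is strictly positive, so $P'(x)/P(x) > 0$; combined with $P(x) > 0$ this yields $P'(x) > 0$ for all $x \geq 1$. In particular $P'(1) > 0$.

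For the higher-order derivatives, I would induct on the degree $n$ by showing that the hypotheses of the lemma are inherited by $P'$. The nontrivial ingredient is that all roots of $P'$ remain real and less than $1$. This follows from Rolle's theorem combined with a multiplicity count: if $P$ has distinct real roots $s_1 < s_2 < \cdots < s_\ell$ with multiplicities $m_1, \dots, m_\ell$ summing to $n$, then each $s_j$ is a root of $P'$ with multiplicity $m_j - 1$, contributing $n - \ell$ roots, and Rolle's theorem produces an additional root of $P'$ in each of the $\ell - 1$ open intervals $(s_j, s_{j+1})$. This accounts for all $n-1$ roots of $P'$, and they all lie in $[s_1, s_\ell] \subset (-\infty, 1)$. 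Together with $P'(1) > 0$ from the previous paragraph, $P'$ satisfies the hypotheses of the lemma at degree $n-1$. Applying the induction hypothesis to $P'$ gives $(P')^{(k)}(x) > 0$ for $x \geq 1$ and $k = 1, \dots, n-1$, i.e.\ $P^{(k)}(x) > 0$ for $k = 2, \dots, n$; together with the base case $P'(x) > 0$ already established, this completes the argument.

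The only step requiring care is the inheritance of the root condition, but this is classical (Rolle plus multiplicity bookkeeping, equivalent in this real-rooted setting to the Gauss--Lucas theorem). The base case $n = 1$ is immediate: $P(x) = c(x - r_1)$ with $c > 0$ and $P'(x) = c > 0$ on all of $\mathbb{R}$.
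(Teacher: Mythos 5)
Your proof is correct, and it reaches the conclusion by a genuinely different route from the paper's. Both arguments begin the same way: factor $P(x)=c\prod_{i}(x-r_i)$, note that each $1-r_i>0$ so $P(1)>0$ forces $c>0$, and conclude $P(x)>0$ for $x\geq1$. You then establish $P'(x)>0$ via the logarithmic derivative $P'/P=\sum_i 1/(x-r_i)$, whereas the paper expands $P'(x)=c\sum_{i}\prod_{j\neq i}(x-r_j)$ directly by the product rule and observes that every factor is positive. For the higher-order derivatives the divergence is more substantial: you set up an induction on the degree, invoking Rolle's theorem (equivalently, the real-rooted case of Gauss--Lucas) together with a multiplicity count to show that $P'$ inherits the hypothesis that all roots are real and $<1$, and then apply the induction hypothesis to $P'$. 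The paper instead treats all $k$ at once, writing $P^{(k)}(x)=c\,k!\sum_{|S|=k}\prod_{j\notin S}(x-r_j)$ as a sum of manifestly positive terms for $x\geq1$. The paper's computation is shorter and entirely self-contained (no appeal to Rolle or to preservation of real-rootedness), while your induction is more conceptual and isolates the structural fact — that differentiation preserves the class of real-rooted polynomials with roots below a threshold — which is interesting in its own right but is more machinery than the problem strictly needs. Both are complete and correct; your Rolle-plus-multiplicity bookkeeping for the inheritance step is carefully done and is the only place where a gap could have crept in.
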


\begin{proof}
(This simple proof was established with the help of Shanshan Ma.)
Let $a_1,a_2,...,a_n<1$ be the roots of $P(x)$.
Then there exists a constant $K$ such that
\[P(x)=K(x-a_1)(x-a_2)...(x-a_n)=K\prod_{j=1}^{n}(x-a_j).\]
Since $P(1)>0$, we know that $K>0$.
Thus we have
\[P'(x)=K\sum_{i=1}^{n}\prod_{j\neq i}(x-a_j)>0\]
for all $x\geq1$, since $x-a_j\geq1-a_j>0$.

Similarly, we can also obtain
$\dps\frac{d^k}{dx^k}P(x)>0$,
$\forall k=1,2,...,n$,
$\forall x\geq1$.
\end{proof}

We now prove \lref{lem.Z(ta)>0}.
\begin{proof}[\textbf{Proof of \lref{lem.Z(ta)>0}}]
$Z(a)=0$ is already known. We now show that
\[Z(ta)\triangleq\cos\Theta\;Y(ta)-\sin\Theta\;X(ta)>0,~\forall t>1.\]
For any $t>1$, we have $(n-2)\pi/2\leq\Theta=H(a)<H(ta)<n\pi/2$ and
\begin{enumerate}[(A)]
\item
if $\Theta=(n-2)\pi/2$ and
  \begin{enumerate}[(a)]
  \item
  if $n$ is even, then $\sin\Theta=0$, $\cos\Theta\neq0$,
  and $\sin H(ta)$ and $\cos\Theta$ have the same sign.
  Since by \lref{lem.samesign} we know that
  $\sin H(ta)$ and $Y(ta)$ have the same sign,
  we thus obtain $Z(ta)=\cos\Theta\;Y(ta)>0$;

  \item
  if $n$ is odd, then $\cos\Theta=0$, $\sin\Theta\neq0$,
  and $\cos H(ta)$ and $-\sin\Theta$ have the same sign.
  Since by \lref{lem.samesign} we know that
  $\cos H(ta)$ and $X(ta)$ have the same sign,
  we thus get $Z(ta)=-\sin\Theta\;X(ta)>0$;
  \end{enumerate}

\item
if $(n-2)\pi/2<\Theta<n\pi/2$ and
  \begin{enumerate}[(a)]
  \item
  if $n$ is even, then $\sin\Theta\neq0$, $\sin H(ta)\neq0$,
  $\sin\Theta$ and $\sin H(ta)$ have the same sign,
  and $-\cot\theta$ is strictly increasing in $\big((n-2)\pi/2,n\pi/2\big)$.
  Since, by \lref{lem.samesign},
  $\sin H(ta)$ and $Y(ta)$ have the same sign,
  we thus know that $Y(ta)\neq0$ and has the same sign with $\sin\Theta$.
  Hence we have
  \[-\frac{X(ta)}{Y(ta)}=-\cot H(ta)>-\cot H(a)=-\cot\Theta
  =-\frac{\cos\Theta}{\sin\Theta},\]
  and therefore $Z(ta)=\cos\Theta\;Y(ta)-\sin\Theta\;X(ta)>0$;

  \item
  if $n$ is odd, then $\cos\Theta\neq0$, $\cos H(ta)\neq0$,
  $\cos\Theta$ and $\cos H(ta)$ have the same sign,
  and $\tan\theta$ is strictly increasing in $\big((n-2)\pi/2,n\pi/2\big)$.
  Since, by \lref{lem.samesign},
  $\cos H(ta)$ and $X(ta)$ have the same sign,
  we thus know that $X(ta)\neq0$ and has the same sign with $\cos\Theta$.
  Hence we have
  \[\frac{Y(ta)}{X(ta)}=\tan H(ta)>\tan H(a)=\tan\Theta
  =\frac{\sin\Theta}{\cos\Theta},\]
  and therefore $Z(ta)=\cos\Theta\;Y(ta)-\sin\Theta\;X(ta)>0$.
  \end{enumerate}
\end{enumerate}
Thus we have proved, in any case, that $Z(ta)>0$, $\forall t>1$.
This is to say that
all the roots of $Z(ta)$ are less than or equal to $1$.
Invoking \lref{lem.real-simple-root}
and by a slightly modified version of \lref{lem.P'(x)>0},
(i.e., we need to consider $1+\epsilon$ for any small $\epsilon>0$
rather than $1$ in the deducing of
$\frac{d}{dt}(Z(ta))>0$, $\forall t>1$;
as for $\frac{d}{dt}(Z(ta))|_{t=1}>0$,
it has already been proved in \cref{cor.xikcksk>0},
or actually can be viewed
as a corollary of \lref{lem.real-simple-root-1}
or \lref{lem.real-simple-root}.
Once $\frac{d}{dt}(Z(ta))>0$ ($\forall t\geq1$)
is established, the higher order derivative inequalities
can be obtained directly by \lref{lem.P'(x)>0}.)
the rest assertions of this lemma are clear.
\end{proof}
\begin{remark}
To see $Z(ta)>0$ ($\forall t>1$) quickly,
there is an easy point of view which will be stated as below,
although we still believe that the above proof are of more value.
By \lref{lem.S-pN-cN} and the proof of \lref{lem.real-simple-root},
we see that $t$ is a root of the $N$-order polynomial $Z(ta)$
if and only if $H(ta)=\Theta+k\pi$.
Since we now take $\Theta\in\big[(n-2)\pi/2,n\pi/2\big)$,
there are exactly $N$ different such roots $t$
and $t=1$ is the largest one. By \lref{lem.S-pN-cN},
we know that the leading coefficient $c_N$ of $Z(ta)$
is positive, thus $Z(ta)>0$ ($\forall t>1$) is evident.
\end{remark}

The following corollary of \lref{lem.Z(ta)>0} can be viewed
as an extension of \cref{cor.xikcksk>0}.
\begin{corollary}\label{cor.xikcksktk>0}
Suppose $a\in L_\Theta\cap\Gamma^+$
with $(n-2)\pi/2\leq\Theta<n\pi/2$.
Then
\[\sum_{k=0}^{n}{\xi_k(\Theta,a)c_k(\Theta)\sigma_k(a)t^{k-1}}>0,
~\forall t\geq1.\]
\end{corollary}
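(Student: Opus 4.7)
The plan is to reduce the claim to \lref{lem.Z(ta)>0} via the pointwise inequality \eref{xikck>=k/nck}. First, observe that the $k=0$ summand contributes nothing: from \eref{eqn.olxiin} and \eref{eqn.ulxiin} we have $\ul\xi_0(a) = \ol\xi_0(a) = 0$, hence $\xi_0(\Theta,a) = 0$. So it suffices to handle $k \geq 1$, for which $\sigma_k(a) > 0$ (since $a \in \Gamma^+$) and $t^{k-1} > 0$ (since $t \geq 1$).

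Next, I would multiply \eref{xikck>=k/nck} by the strictly positive quantity $\sigma_k(a)\,t^{k-1}$ to obtain, term by term,
\[
\xi_k(\Theta,a)\,c_k(\Theta)\,\sigma_k(a)\,t^{k-1} \;\geq\; \frac{k}{n}\,c_k(\Theta)\,\sigma_k(a)\,t^{k-1}.
\]
Summing over $k = 0, 1, \ldots, n$ and recognizing the right-hand side via the identity
\[
\frac{d}{dt}\bigl(Z(ta)\bigr) \;=\; \sum_{k=0}^{n} k\,c_k(\Theta)\,\sigma_k(a)\,t^{k-1}
\]
(which follows from \eref{eqn.Z(ta)} by differentiation, and matches \eref{eqn.hZdef} through $\wh Z(ta) = t\,\frac{d}{dt}(Z(ta))$), I arrive at
\[
\sum_{k=0}^{n} \xi_k(\Theta,a)\,c_k(\Theta)\,\sigma_k(a)\,t^{k-1} \;\geq\; \frac{1}{n}\,\frac{d}{dt}\bigl(Z(ta)\bigr).
\]

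To finish, I would invoke \lref{lem.Z(ta)>0}, which, under precisely the hypotheses $a \in L_\Theta \cap \Gamma^+$ and $(n-2)\pi/2 \leq \Theta < n\pi/2$, asserts $\frac{d}{dt}(Z(ta)) > 0$ for every $t \geq 1$. This yields the strict inequality claimed. The argument is essentially a one-parameter extension of \cref{cor.xikcksk>0} (which is the $t = 1$ case), and there is no real obstacle: all the heavy lifting has already been done in \lref{lem.real-simple-root}–\lref{lem.Z(ta)>0}, where the critical/supercritical phase condition was used to guarantee positivity of $\frac{d}{dt}(Z(ta))$ on $[1, +\infty)$.
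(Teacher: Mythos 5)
Your argument is correct and is essentially the same as the paper's: both multiply the termwise inequality \eref{xikck>=k/nck} by $\sigma_k(a)\,t^{k-1}>0$, sum, and invoke \lref{lem.Z(ta)>0} for the positivity of $\frac{d}{dt}\bigl(Z(ta)\bigr)=\sum_k k\,c_k\sigma_k t^{k-1}$ on $[1,+\infty)$. (The observation $\xi_0=0$ is harmless but unnecessary, since $\sigma_0\,t^{-1}>0$ anyway for $t\geq 1$.)
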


\begin{proof}
By \eqref{xikck>=k/nck} and \lref{lem.Z(ta)>0}, it is clear that
\[\sum{\xi_kc_k\sigma_kt^{k-1}}
\geq\frac{1}{n}\sum{kc_k\sigma_kt^{k-1}}>0.\]
\end{proof}

\section{Proof of the main theorem}\label{sec.pmaint}

\subsection{Construction of the subsolutions}\label{subsec.csubsol}

In this subsection, we prove the following key lemma
and then use it to construct subsolutions
of the special Lagrangian equation \eref{eqn.sle}.
Note that for the generalized radially symmetric subsolution
$\Phi(x)=\phi(r)$ that we want to construct,
the solution $\psi(r)$ discussed in the following lemma
actually is corresponding to $\phi'(r)/r$,
informally those major eigenvalues of the Hessian $D^2\Phi$
(see the proof of the \lref{lem.Phi-subsol}).
\begin{lemma}\label{lem.psi}
Assume $A\in\mathscr{A}_\Theta$
with $(n-2)\pi/2\leq\Theta<n\pi/2$ and $n\geq3$.
Suppose $a:=(a_1,a_2,...,a_n):=\lambda(A)$,
$0<a_1\leq a_2\leq...\leq a_n$ and $\beta\geq 1$.
Then the problem
\begin{equation}\label{eqn.psi}
\left\{
\begin{aligned}
&r\psi'(r)\sum_{k=0}^{n}
{\xi_k(\Theta,a)c_k(\Theta)\sigma_k(a)(\psi(r))^{k-1}}\\
&\qquad+\sum_{k=0}^{n}{c_k(\Theta)\sigma_k(a)(\psi(r))^k}
=0,~r>1,\\[0.2cm]
&\psi(1)=\beta,
\end{aligned}
\right.
\end{equation}
has a unique smooth solution $\psi(r)=\psi(r,\beta)$ on $[1,+\infty)$,
which satisfies
\begin{enumerate}[\quad(i)]
\item[(i)]
$1\leq\psi(r,\beta)\leq\beta$, $\p_r\psi(r,\beta)\leq0$,
$\forall r\geq1$, $\forall\beta\geq1$.
More specifically,
$\psi(r,1)\equiv 1$, $\psi(1,\beta)\equiv\beta$;
and $1<\psi(r,\beta)<\beta$, $\forall r>1$, $\forall\beta>1$.

\item[(ii)]
$\psi(r)\ra1$ $(r\ra+\infty)$
and $r\psi'(r)\ra0$ $(r\ra+\infty)$.
Furthermore, $\psi(r,\beta)=1+O(r^{-m})~(r\ra+\infty)$,
where $m=m(\Theta,a)\in(2,n]$
and the $O(\cdot)$ depends only on
$n$, $\Theta$, $\lambda(A)$ and $\beta$.

\item[(iii)]
$\psi(r,\beta)$ is continuous
and strictly increasing with respect to $\beta$
and \[\lim_{\beta\ra+\infty}\psi(r,\beta)=+\infty,~\forall r\geq 1.\]
\end{enumerate}
\end{lemma}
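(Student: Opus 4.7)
The plan is to view \eref{eqn.psi} as a separable first-order ODE and solve it implicitly. Write $F(t):=\sum_{k=0}^n\xi_k(\Theta,a)c_k(\Theta)\sigma_k(a)\,t^{k-1}$ and $G(t):=\sum_{k=0}^n c_k(\Theta)\sigma_k(a)\,t^k=Z(ta)$, so that the equation reads $rF(\psi)\psi'+G(\psi)=0$. Since $a\in L_\Theta\cap\Gamma^+$ with critical or supercritical phase, \lref{lem.Z(ta)>0} and \cref{cor.xikcksktk>0} give $G(1)=0$, $G'(1)>0$, $F(1)>0$, and $F(t),G(t)>0$ for every $t>1$. Moreover, one checks $G'(1)/F(1)=m(\Theta,a)$ directly from \eref{eqn.mdef}. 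In particular $\psi\equiv 1$ is the unique solution when $\beta=1$; this settles the degenerate case.

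For $\beta>1$, I would separate variables and define
\[
\Lambda(\psi):=\int_\psi^\beta\frac{F(s)}{G(s)}\,ds,\qquad \psi\in(1,\beta].
\]
The positivity above makes $\Lambda$ smooth and strictly decreasing with $\Lambda(\beta)=0$. Because $G$ has a simple zero at $1$ (by \lref{lem.real-simple-root-1}), $F(s)/G(s)\sim F(1)/(G'(1)(s-1))$ as $s\to 1^+$, so $\Lambda(\psi)\to+\infty$ as $\psi\to 1^+$, and $\Lambda$ is a bijection of $(1,\beta]$ onto $[0,+\infty)$. Setting $\psi(r):=\Lambda^{-1}(\ln r)$ for $r\geq 1$ (with $\psi(+\infty):=1$) yields the unique smooth solution on $[1,+\infty)$, and the ODE identity $\psi'=-G(\psi)/(rF(\psi))\leq 0$ with strict inequality for $r>1$ when $\beta>1$ immediately gives (i).

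For part (ii), I would Taylor-expand at $s=1$ to write
\[
\frac{F(s)}{G(s)}=\frac{F(1)}{G'(1)\,(s-1)}+h(s)=\frac{1}{m(s-1)}+h(s),
\]
where $h$ is bounded on $[1,\beta]$. Integrating on $[\psi,\beta]$ gives
\[
\ln r=\Lambda(\psi)=\frac{1}{m}\ln\frac{\beta-1}{\psi-1}+O(1)\qquad(\psi\to 1^+),
\]
which rearranges to $\psi(r)-1=O(r^{-m})$ with a constant depending only on $n$, $\Theta$, $\lambda(A)$ and $\beta$. From this decay, $\psi(r)\to 1$ and $r\psi'(r)=-G(\psi)/F(\psi)\to 0$ as $r\to+\infty$ follow trivially. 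Finally, for part (iii), continuous and strictly increasing dependence on $\beta$ follows from the smoothness of $\Lambda^{-1}(\ln r)$ in $\beta$ together with uniqueness for the Lipschitz ODE (so distinct trajectories cannot cross). The divergence $\lim_{\beta\to+\infty}\psi(r,\beta)=+\infty$ I would prove by contradiction: if $\psi(r,\beta_j)\leq M$ for some sequence $\beta_j\to+\infty$, then $\ln r=\int_{\psi(r,\beta_j)}^{\beta_j}F/G\,ds\geq\int_M^{\beta_j}F/G\,ds\to+\infty$, since $F(s)/G(s)\sim\ol\xi_N(a)/s=1/s$ at $+\infty$ (with $N$ from \lref{lem.S-pN-cN}), a contradiction.

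The main obstacle is part (ii): one has to extract the exact exponent $m(\Theta,a)$ from the singularity of $F/G$ at $s=1$, and this is precisely the rationale behind the definition \eref{eqn.mdef}. Everything else is standard ODE manipulation, and the heavy lifting has already been done in \sref{sec.Q&P} through the positivity statements for $Z$, $\wh Z$, and their coefficients.
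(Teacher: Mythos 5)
Your proposal is correct and reaches all three parts of the lemma, but the route is genuinely different from the paper's in two places. For existence/uniqueness the paper runs the Picard--Lindel\"{o}f theorem together with a maximal-interval argument to rule out finite-time blowup and crossing of $\{\nu=1\}$, whereas you bypass ODE existence theory entirely by separating variables and inverting the monotone integral $\Lambda(\psi)=\int_\psi^\beta F/G$, with the blowup of $\Lambda$ at $\psi=1^+$ automatically confining the trajectory to $(1,\beta]$; this is slicker and self-contained. For the decay rate the paper performs a \emph{full} partial-fraction decomposition of $F/G$ over all the real simple roots $1>\psi_2>\dots>\psi_N$ of $G$, integrates to get an explicit closed-form relation \eref{eqn.psimoar}, and even extracts the leading coefficient $(\beta-1)B(\beta)/B(1)$ of $r^{-m}$; you only isolate the simple pole at $s=1$, writing $F(s)/G(s)=\tfrac{1}{m(s-1)}+h(s)$ with $h$ bounded, and integrate. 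Your version is lighter and gets the same $O(r^{-m})$ bound, but it loses the explicit constant (which the paper does not ultimately need either). For part (iii) the paper solves the linear variational equation to compute $\partial\psi/\partial\beta=\exp\int_1^r g'(\psi)/\tau\,d\tau>0$; your non-crossing/implicit-integral monotonicity argument is equally valid and shorter. The divergence argument by contradiction is essentially the paper's. One small slip: you claim $F(s)/G(s)\sim\ol\xi_N(a)/s=1/s$ as $s\to+\infty$, but $\ol\xi_N(a)=1$ only when $N=n$ (supercritical phase); in the critical case $\Theta=(n-2)\pi/2$ one has $N=n-1$ and $\ol\xi_{n-1}(a)<1$. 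This does not damage the proof, since all you need is $\ol\xi_N(a)>0$ to conclude $\int_M^{\infty}F/G\,ds=+\infty$.
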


\begin{proof}
For simplicity of notation, we will often write
$\psi(r)$ or $\psi(r,\beta)$
(respectively, $c_k(\Theta)$, $\xi_k(\Theta,a)$, $\sigma_k(a)$)
simply as $\psi$ (respectively, $c_k$, $\xi_k$, $\sigma_k$),
when there is no confusion.
The proof of this lemma now will be divided into three steps.

\medskip

\emph{Step 1.}\quad
In this step, we largely follow the strategy
of the one in our previous paper \cite{LL16},
although the foundations of the statements are totally different.

We deduce from \eref{eqn.psi} that
\begin{equation}\label{eqn.psi.d}
\frac{d\psi}{dr}
=-\frac{1}{r}\cdot\frac{\sum_{k=0}^{n}{c_k(\Theta)\sigma_k(a)\psi^k}}
{\sum_{k=0}^{n}{\xi_k(\Theta,a)c_k(\Theta)\sigma_k(a)\psi^{k-1}}}
=:\frac{g(\psi)}{r},
\end{equation}
where we set
\[g(\nu):=-\frac{\sum_{k=0}^{n}{c_k(\Theta)\sigma_k(a)\nu^k}}
{\sum_{k=0}^{n}{\xi_k(\Theta,a)c_k(\Theta)\sigma_k(a)\nu^{k-1}}}.\]
Hence the problem \eref{eqn.psi}
is equivalent to the following problem
\begin{equation}\label{eqn.psi.g}
\left\{
\begin{aligned}
\psi'(r)&=\frac{g(\psi(r))}{r},~r>1,\\
\psi(1)&=\beta.
\end{aligned}
\right.
\end{equation}

If $\beta=1$, then $\psi(r)\equiv 1$ is a solution
of the problem \eref{eqn.psi.g},
since $g(1)=0$ according to \lref{lem.Z(ta)>0}.
Thus, by the uniqueness theorem
for the solution of the ordinary differential equation,
we know that $\psi(r,1)\equiv 1$ is the unique solution
satisfies the problem \eref{eqn.psi.g}.

Now if $\beta>1$, since
\[h(r,\nu):=\frac{g(\nu)}{r}
\in C^{\infty}((1,+\infty)\times(\nu_0,+\infty)),\]
where $0<\nu_0<1$
(note that $\nu_0$ exists, since we have
\[\sum_{k=0}^{n}{\xi_k(\Theta,a)c_k(\Theta)\sigma_k(a)\nu^{k-1}}>0,
~\forall \nu\geq1,\]
according to \cref{cor.xikcksktk>0}),
by the existence theorem
(i.e., the Picard-Lindel\"{o}f theorem)
and the theorem of the maximal interval of existence
for the solution
of the initial value problem
of the ordinary differential equation,
we know that the problem \eref{eqn.psi.g}
has a unique smooth solution $\psi(r)=\psi(r,\beta)$
locally around the initial point
and can be extended to a maximal interval
$[1,\zeta)$, in which $\zeta$
can only be one of the following cases:
\begin{enumerate}[\qquad($1^\circ$)]
\item $\zeta=+\infty$;

\item $\zeta<+\infty$, $\psi(r)$ is unbounded on $[1,\zeta)$;

\item $\zeta<+\infty$, $(r,\psi(r))$ converges to some point on $\{\nu=\nu_0\}$
as $r\ra\zeta-$.
\end{enumerate}

By \lref{lem.Z(ta)>0} and \cref{cor.xikcksktk>0}, we see that
\[\frac{g(\psi(r))}{r}<0,~\forall \psi(r)>1.\]
Thus $\psi(r)=\psi(r,\beta)$
is strictly decreasing with respect to $r$,
this excludes the case ($2^\circ$) above.

We assert that the case ($3^\circ$) can also be excluded.
Otherwise, the solution curve
must intersect with $\{\nu=1\}$ at some point $(r_0,\psi(r_0))$ on it
and then tends to $\{\nu=\nu_0\}$ after crossing it.
But $\psi(r)\equiv 1$ is also a solution through $(r_0,\psi(r_0))$
which contradicts the uniqueness theorem
for the solution of the initial value problem
of the ordinary differential equation.

Thus we complete the proof of the existence and uniqueness
of the solution $\psi(r)=\psi(r,\beta)$
of the problem \eref{eqn.psi} on $[1,+\infty)$.

\medskip

According to the same reason, that is,
$\psi(r,\beta)$ is strictly decreasing with respect to $r$
and the solution curve can not cross $\{\nu=1\}$ provided $\beta>1$,
we see easily that $1<\psi(r,\beta)<\beta$, $\forall r>1$, $\forall\beta>1$.
Thus, assertion $\emph{(i)}$ of the lemma now is clear.

\medskip

\emph{Step 2.}\quad
In view of \lref{lem.real-simple-root} and \lref{lem.Z(ta)>0},
all the roots of the polynomial
$\sum_{k=0}^{n}{c_k\sigma_k\psi^{k-1}}$
are real (less than or equal to one) and simple.
Suppose they are $1>\psi_2>...>\psi_N$,
where
\[
N=N(n,\Theta)\triangleq
\begin{cases}
n-1, &\text{when $\Theta=(n-2)\pi/2$},\\
n,   &\text{when $(n-2)\pi/2<\Theta<n\pi/2$},
\end{cases}
\]
which has been defined in \eref{eqn.Ndef} in \lref{lem.S-pN-cN}.
We then have
\[\sum_{k=0}^{n}{c_k\sigma_k\psi^{k-1}}
=K(\psi-1)(\psi-\psi_2)...(\psi-\psi_N)\]
for some $K>0$ (according to \lref{lem.Z(ta)>0}), and
\begin{equation}\label{eqn.frac-K1=1/m}
\frac{\sum_{k=0}^{n}{\xi_kc_k\sigma_k\psi^{k-1}}}
{\sum_{k=0}^{n}{c_k\sigma_k\psi^k}}
=\frac{K_1}{\psi-1}
+\frac{K_2}{\psi-\psi_2}+...
+\frac{K_N}{\psi-\psi_N}
\end{equation}
for some $K_1,K_2,...,K_N\in\R$.
It is easy to show that
$K_1=1/m$, where
\[
m=m(\Theta,a)\triangleq\frac{\sum_{k=0}^{n}{kc_k\sigma_k}}
{\sum_{k=0}^{n}{\xi_kc_k\sigma_k}}
=-g'(1),
\]
which has been defined in \eref{eqn.mdef} in \ssref{subsec.XYZ}
(for $g'(1)$ see also \eref{eqn.g'nu}).
Indeed, if there are polynomials $P,Q,R$ on $\R$
and $K\in\R$ such that $Q(1)\neq0$ and
\[\frac{P(x)}{(x-1)Q(x)}=\frac{K}{x-1}+\frac{R(x)}{Q(x)},\]
we then have
\[P(x)=KQ(x)+(x-1)R(x).\]
Letting $x=1$, we conclude that
\[K=\frac{P(1)}{Q(1)}
=\frac{P(1)}{\frac{d}{dx}\big((x-1)Q(x)\big)\big{|}_{x=1}},\]
which gives formula to calculate $K_1$.

\medskip

Combining \eref{eqn.psi.d} and \eref{eqn.frac-K1=1/m}, we have
\begin{eqnarray*}
-d\ln r&=&-\frac{dr}{r}
=\frac{\sum_{k=0}^{n}{\xi_kc_k\sigma_k\psi^{k-1}}}
{\sum_{k=0}^{n}{c_k\sigma_k\psi^k}}d\psi\\
&=&\left(\frac{K_1}{\psi-1}
+\frac{K_2}{\psi-\psi_2}+...
+\frac{K_N}{\psi-\psi_N}\right)d\psi.
\end{eqnarray*}
Since $K_1=1/m$, we deduce that
\begin{eqnarray}\label{eqn.mdlnr}
d\ln(r^{-m})&=&-md\ln r
=\left(\frac{1}{\psi-1}+\frac{mK_2}{\psi-\psi_2}+...
+\frac{mK_N}{\psi-\psi_N}\right)d\psi\nonumber\\
&=&d\ln\left((\psi-1)(\psi-\psi_2)^{mK_2}...(\psi-\psi_N)^{mK_N}\right).
\end{eqnarray}
Integrating it from $1$ to $r$
and recalling $\psi(1)=\beta\geq1$,
we obtain
\begin{eqnarray*}
&~&\ln\left((\psi(r)-1)(\psi(r)-\psi_2)^{mK_2}...(\psi(r)-\psi_N)^{mK_N}\right)\\
&=&\ln\left((\beta-1)(\beta-\psi_2)^{mK_2}...(\beta-\psi_N)^{mK_N}\right)
+\ln(r^{-m}),
\end{eqnarray*}
and hence
\begin{eqnarray*}
&~&(\psi(r)-1)(\psi(r)-\psi_2)^{mK_2}...(\psi(r)-\psi_N)^{mK_N}\\
&=&(\beta-1)(\beta-\psi_2)^{mK_2}...(\beta-\psi_N)^{mK_N}r^{-m}\\
&=:&(\beta-1)B(\beta)r^{-m},
\end{eqnarray*}
where we set
\[B(\nu):=(\nu-\psi_2)^{mK_2}...(\nu-\psi_N)^{mK_N}.\]
Thus
\begin{equation}\label{eqn.psimoar}
\frac{\psi(r,\beta)-1}{r^{-m}}
=\frac{(\beta-1)B(\beta)}{B(\psi(r,\beta))}.
\end{equation}
Since $\psi_N<\psi_{N-1}<...<\psi_2<1$
and $1\leq\psi(r,\beta)\leq\beta$, $\forall r\geq 1$,
we have
\begin{equation*}
0\leq\frac{\psi(r,\beta)-1}{r^{-m}}
\leq C(n,\Theta,\lambda(A),\beta),
~\forall r\geq 1.
\end{equation*}
Thus we get
\[\psi(r,\beta)\ra 1~(r\ra+\infty),
~\forall \beta\geq 1.\]
Substituting it into \eref{eqn.psimoar}, we deduce that
\[\frac{\psi(r,\beta)-1}{r^{-m}}
\ra\frac{(\beta-1)B(\beta)}{B(1)}~(r\ra+\infty),
~\forall \beta\geq 1.\]
Therefore
\[\psi(r,\beta)=1+\frac{(\beta-1)B(\beta)}{B(1)}r^{-m}+o(r^{-m})
=1+O(r^{-m})~(r\ra+\infty),\]
where $o(\cdot)$ and $O(\cdot)$
depend only on $n$, $\Theta$, $\lambda(A)$ and $\beta$.
Note also that $r\psi'(r)=g(\psi(r))\ra g(1)=0$ $(r\ra+\infty)$.
Thus the assertion \emph{(ii)} of the lemma is proved.

\medskip

\emph{Step 3.}\quad
By the theorem of the differentiability of the solution
with respect to the initial value,
we can differentiate $\psi(r,\beta)$
with respect to $\beta$ as below:
\[\left\{
\begin{aligned}
&\frac{\p\psi(r,\beta)}{\p r}=\frac{g(\psi(r,\beta))}{r},&\\
&\psi(1,\beta)=\beta;&
\end{aligned}\right.\]
\[\Ra\left\{
\begin{aligned}
&\frac{\p^2 \psi(r,\beta)}{\p \beta\p r}
=\frac{g'(\psi(r,\beta))}{r}\cdot\frac{\p\psi(r,\beta)}{\p\beta},&\\
&\frac{\p\psi(1,\beta)}{\p\beta}=1.&
\end{aligned}\right.\]
Let
\[v(r):=\frac{\p\psi(r,\beta)}{\p\beta}.\]
We have
\[\left\{
\begin{aligned}
&\frac{dv}{dr}
=\frac{g'(\psi(r,\beta))}{r}\cdot v,&\\
&v(1)=1.&
\end{aligned}\right.\]
Therefore we can deduce that
\[\frac{dv}{v}
=\frac{g'(\psi(r,\beta))}{r}dr,\]
and hence
\[\frac{\p\psi(r,\beta)}{\p\beta}=v(r)
=\exp\int_1^r{\frac{g'(\psi(\tau,\beta))}{\tau}d\tau}.\]

By calculation, we have
\begin{eqnarray}\label{eqn.g'nu}
g'(\nu)&=&-\frac{1}
{\left(\sum_{k=0}^{n}{\xi_kc_k\sigma_k\nu^{k-1}}\right)^2}
\Bigg(
\sum_{k=0}^{n}{kc_k\sigma_k\nu^{k-1}}
\sum_{k=0}^{n}{\xi_kc_k\sigma_k\nu^{k-1}}\nonumber\\
&&\qquad\qquad
-\sum_{k=0}^{n}{c_k\sigma_k\nu^{k}}
\sum_{k=0}^{n}{(k-1)\xi_kc_k\sigma_k\nu^{k-2}}
\Bigg).
\end{eqnarray}
Since
$\sum_{k=0}^{n}{\xi_kc_k\sigma_k\nu^{k-1}}>0$ ($\forall\nu\geq1$),
$1\leq\psi(r,\beta)\leq\beta$ ($\forall r\geq1$, $\forall\beta\geq1$),
$g'(1)=-m\in[-n,-2)$ and
\[\lim_{\nu\ra+\infty}g'(\nu)=-\frac{1}{\xi_N}
\in\left[-\frac{n}{n-1},-1\right],\]
we obtain
\[\left|g'(\psi(r,\beta))\right|
\leq C(n,\Theta,\lambda(A))<+\infty,
~\forall r\geq1,\]
and hence
\[0<\frac{\p\psi(r,\beta)}{\p\beta}
\leq r^{C},~\forall r\geq 1.\]
In particular, we see that
$\psi(r,\beta)$ is strictly increasing with respect to $\beta$.

\medskip

Now we come to prove
\[\lim_{\beta\ra+\infty}\psi(r,\beta)=+\infty,
~\forall r\geq 1,\]
by contradiction.
Suppose not. There would exist $r_0\geq1$, $M>1$
and $\{\beta_k\}_{k=1}^{\infty}$, $1<\beta_k\ra+\infty$ $(k\ra+\infty)$
such that $\psi(r_0,\beta_k)\leq M$, $\forall k\in\mathds{Z}^+$.
Note that there are infinitely many $\beta_k>M$ satisfying
$1\leq\psi(r_0,\beta_k)\leq M<\beta_k$.
Since
\[\frac{d\psi}{dr}=\frac{g(\psi)}{r},\]
where
\[g(\nu)=-\frac{\sum{c_k\sigma_k\nu^k}}
{\sum{\xi_kc_k\sigma_k\nu^{k-1}}}\]
satisfies $g(1)=0$
and $0<-g(\nu)<C\nu$ ($\forall \nu>1$)
with $C=C(n,\Theta,\lambda(A))>0$,
we have
\[\frac{d\psi}{C\psi}\leq\frac{d\psi}{-g(\psi)}=-\frac{dr}{r}.\]
Integrating it from $M$ to $\beta_k$
and recalling $\psi(1,\beta_k)=\beta_k$, we get
\[\int_{M}^{\beta_k}\frac{d\psi}{C\psi}
\leq\int_{M}^{\beta_k}\frac{d\psi}{-g(\psi)}
\leq\int_{\psi(r_0,\beta_k)}^{\beta_k}\frac{d\psi}{-g(\psi)}
=-\int_{r_0}^{1}\frac{dr}{r}=\ln r_0<+\infty.\]
Let $\beta_k\ra+\infty$, we have
\[\int_{M}^{\beta_k}\frac{d\psi}{C\psi}\ra+\infty,\]
which is a contradiction.
Hence the assertion \emph{(iii)} of the lemma is proved,
and we thus complete the proof of the whole lemma.
\end{proof}

\medskip

Set
\[\mu_R(\beta):=\int_R^{+\infty}\tau\big(\psi(\tau,\beta)-1\big)d\tau,
\quad\forall R\geq 1,~\forall\beta\geq 1.\]
Note that the integral on the right hand side
is convergent, in view of \lref{lem.psi}-\textsl{(ii)}.
Moreover, as an application of \lref{lem.psi},
we have the following corollary.
\begin{corollary}\label{cor.mub}
$\mu_R(\beta)$ is nonnegative, continuous and
strictly increasing with respect to $\beta$.
Furthermore,
\[\mu_R(\beta)=O(R^{-m+2})~(R\ra+\infty),~\forall\beta\geq 1;\]
and
\begin{equation}\label{eqn.mu-infty}
\mu_R(\beta)\ra+\infty~(\beta\ra+\infty),~\forall R\geq 1.
\end{equation}

\end{corollary}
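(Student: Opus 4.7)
The plan is to derive all four claims from \lref{lem.psi} together with elementary calculus under the integral sign (dominated convergence and pointwise monotonicity). First I would read off nonnegativity and strict monotonicity from the pointwise properties of $\psi(\tau,\beta)$; then establish continuity and the $O(R^{2-m})$ decay from a single uniform dominating bound; and finally obtain the blow-up as $\beta\to+\infty$ via a one-sided estimate on a bounded interval.

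Nonnegativity is immediate from \lref{lem.psi}\textsl{-(i)}, which gives $\psi(\tau,\beta)\geq 1$, making the integrand nonnegative. For strict monotonicity, given $1\leq\beta_1<\beta_2$, part \textsl{(iii)} of \lref{lem.psi} yields $\psi(\tau,\beta_1)<\psi(\tau,\beta_2)$ for every $\tau\geq 1$; integrating this strict pointwise inequality gives $\mu_R(\beta_1)<\mu_R(\beta_2)$.

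For continuity and the decay estimate I would extract a uniform dominating function from the identity derived in Step 2 of the proof of \lref{lem.psi}, namely
\[\psi(r,\beta)-1=\frac{(\beta-1)B(\beta)}{B(\psi(r,\beta))}\,r^{-m},\]
where $B(\nu)=(\nu-\psi_2)^{mK_2}\cdots(\nu-\psi_N)^{mK_N}$. Since all roots $\psi_2,\ldots,\psi_N$ lie strictly below $1$, $B$ is continuous and strictly positive on $[1,+\infty)$, hence bounded above and below away from zero on any compact subinterval. Combined with $1\leq\psi(r,\beta)\leq\beta$ this yields, for each fixed $\beta_0\geq 1$,
\[\psi(r,\beta)-1\leq C(n,\Theta,\lambda(A),\beta_0)\,r^{-m},\quad\forall r\geq 1,\ \forall\beta\in[1,\beta_0].\]
Because $m>2$ (guaranteed by the assumption $A\in\mathscr{A}_\Theta$), the function $\tau\mapsto C\tau^{1-m}$ is integrable on $[R,+\infty)$, dominating the integrand uniformly in $\beta$ on bounded intervals. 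This immediately yields the estimate $\mu_R(\beta)\leq\frac{C}{m-2}R^{2-m}=O(R^{-m+2})$, and—combined with the pointwise continuity of $\psi(\tau,\cdot)$ granted by part \textsl{(iii)}—it also gives continuity of $\mu_R$ in $\beta$ by dominated convergence.

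Finally, for the blow-up \eref{eqn.mu-infty}, I would fix an arbitrary $T>R$ and use the monotonicity $\p_r\psi(\cdot,\beta)\leq 0$ from \textsl{(i)} to write
\[\mu_R(\beta)\geq\int_R^T\tau\bigl(\psi(\tau,\beta)-1\bigr)\,d\tau\geq\bigl(\psi(T,\beta)-1\bigr)\,\frac{T^2-R^2}{2},\]
and then invoke $\psi(T,\beta)\to+\infty$ as $\beta\to+\infty$ from \textsl{(iii)} to force $\mu_R(\beta)\to+\infty$. The main obstacle in this program is the locally uniform-in-$\beta$ bound of the previous paragraph, which requires the explicit representation of $\psi(r,\beta)-1$ and the positivity of $B$ on $[1,+\infty)$; once this is in hand, the rest of the argument is routine.
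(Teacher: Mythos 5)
Your proof is correct and fills in exactly what the paper declares to be ``straightforward in light of Lemma~\ref{lem.psi}\textsl{-(ii),(iii)}''; the paper gives no details, and your argument supplies them in the way the authors clearly intended (nonnegativity and monotonicity from the pointwise properties of $\psi$, decay from the $O(r^{-m})$ bound of part \textsl{(ii)}, continuity by dominated convergence, blow-up from part \textsl{(iii)}). One small refinement worth noting: for the decay claim alone, the pointwise-in-$\beta$ estimate $\psi(r,\beta)-1=O(r^{-m})$ of Lemma~\ref{lem.psi}\textsl{-(ii)} already suffices, so the locally uniform dominating bound you extract from the identity $\psi(r,\beta)-1=\tfrac{(\beta-1)B(\beta)}{B(\psi(r,\beta))}r^{-m}$ is really only needed for the dominated-convergence step proving continuity --- which is the one detail the paper genuinely glosses over, and you handle it cleanly.
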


\begin{proof}
The proof is straightforward 
in light of \lref{lem.psi}-\textsl{(ii),(iii)}.
\end{proof}

\medskip

For any $\alpha,\beta,\gamma\in\R$, $\beta,\gamma\geq1$
and for any \textsl{diagonal} matrix $A\in\mathscr{A}_\Theta^0$, let
\begin{equation}\label{eqn.phidef}
\phi(r):=\phi_{\alpha,\beta,\gamma}(r)
:=\alpha+\int_\gamma^r\tau\psi(\tau,\beta)d\tau,
~\forall r\geq\gamma,
\end{equation}
and let
\begin{equation}\label{eqn.Phidef}
\Phi(x):=\Phi_{\alpha,\beta,\gamma,A}(x):=\phi(r)
:=\phi_{\alpha,\beta,\gamma}(r_A(x)),
~\forall x\in\R^n\setminus{E_\gamma},
\end{equation}
where $r=r_A(x)=\sqrt{x^TAx}$.
Then
\begin{eqnarray}
\phi_{\alpha,\beta,\gamma}(r)
&=&\nonumber\int_\gamma^r\tau\big(\psi(\tau,\beta)-1\big)d\tau
+\frac{1}{2}r^2-\frac{1}{2}\gamma^2+\alpha\\
&=&\frac{1}{2}r^2+\left(\mu_\gamma(\beta)+\alpha
-\frac{1}{2}\gamma^2\right)-\mu_r(\beta)\label{eqn.phi-mu}\\
&=&\frac{1}{2}r^2+\left(\mu_\gamma(\beta)+\alpha
-\frac{1}{2}\gamma^2\right)+O(r^{-m+2})
~(r\ra+\infty),\quad\label{eqn.phi-O}
\end{eqnarray}
according to \cref{cor.mub}.
Furthermore, we have
\begin{lemma}\label{lem.Phi-subsol}
$\Phi$ is a smooth subsolution of \eref{eqn.sle}
in $\R^n\setminus\ol{E_\gamma}$, that is,
\begin{equation}\label{eqn.Phi-subsol}
\sum_{i=1}^{n}\arctan\lambda_i\left(D^2\Phi(x)\right)
\geq\Theta,~\forall x\in\R^n\setminus\ol{E_\gamma}.
\end{equation}
\end{lemma}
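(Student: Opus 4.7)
The plan is to prove the inequality in two stages: first establish the algebraic subsolution bound $Z(\lambda(D^2\Phi))\geq0$, then upgrade it to the Lagrangian phase bound $H(\lambda(D^2\Phi))\geq\Theta$ by a connectedness argument.

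For the first stage, since $A$ is diagonal and $r^2=\sum_i a_i x_i^2$, a direct differentiation yields
\[
\p_{ij}\Phi(x)=\psi(r)a_i\delta_{ij}+\frac{\psi'(r)}{r}(a_ix_i)(a_jx_j),
\]
which is precisely the form of matrix covered by \lref{lem.skm} with $p_i=\psi(r)a_i$, $q_i=a_ix_i$, $s=\psi'(r)/r$. Applying that lemma and recognizing via \eref{eqn.Xikdef} that $\sum_i\sigma_{k-1;i}(a)a_i^2x_i^2=\Xi_k(a,x)\sigma_k(a)\,r^2$ gives
\[
\sigma_k(\lambda(D^2\Phi))=\psi^k\sigma_k(a)+r\psi'(r)\psi^{k-1}\Xi_k\sigma_k(a).
\]
I would then multiply by $c_k(\Theta)$, sum in $k$, and substitute the ODE \eref{eqn.psi} in the form $\sum_k c_k\sigma_k\psi^k=-r\psi'(r)\sum_k\xi_k c_k\sigma_k\psi^{k-1}$ to collapse the resulting expression to the clean identity
\[
Z(\lambda(D^2\Phi))=r\psi'(r)\sum_{k=0}^{n}(\Xi_k-\xi_k)c_k\sigma_k(a)\psi^{k-1}.
\]
The definition \eref{eqn.xikdef} of $\xi_k$ was tailored precisely so that $(\Xi_k-\xi_k)c_k\leq0$ pointwise in $x$; combined with $\sigma_k(a)>0$, $\psi\geq1$ from \lref{lem.psi}-\textsl{(i)}, and $r\psi'(r)\leq0$ from the same lemma, this forces $Z(\lambda(D^2\Phi))\geq0$ throughout $\R^n\setminus\ol{E_\gamma}$.

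For the second stage I would exploit the factorization $\prod_i(1+\sqrt{-1}\lambda_i)=R(\lambda)e^{\sqrt{-1}H(\lambda)}$ with $R(\lambda):=\prod_i\sqrt{1+\lambda_i^2}>0$; separating real and imaginary parts gives $X(\lambda)=R\cos H(\lambda)$ and $Y(\lambda)=R\sin H(\lambda)$, whence
\[
Z(\lambda)=\cos\Theta\;Y(\lambda)-\sin\Theta\;X(\lambda)=R(\lambda)\sin(H(\lambda)-\Theta).
\]
The stage-one inequality therefore confines $H(\lambda(D^2\Phi(x)))-\Theta$ to $\bigcup_{k\in\Z}[2k\pi,(2k+1)\pi]$ for every $x\in\R^n\setminus\ol{E_\gamma}$.

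The main obstacle is to select the correct component $k=0$. For this I would invoke the asymptotics in \lref{lem.psi}-\textsl{(ii)}: as $r=r_A(x)\ra+\infty$, one has $\psi(r)\ra1$, and the rank-one perturbation satisfies $\|(\psi'(r)/r)(Ax)(Ax)^T\|\leq a_n|r\psi'(r)|\ra0$ uniformly in the direction of $x$, so $D^2\Phi(x)\ra A$ in operator norm and hence $H(\lambda(D^2\Phi(x)))\ra H(a)=\Theta$ (using $A\in\mathscr{A}_\Theta^0\subset L_\Theta$). Since the continuous map $x\mapsto H(\lambda(D^2\Phi(x)))-\Theta$ has the connected domain $\R^n\setminus\ol{E_\gamma}$, its image is a connected subset of $\bigcup_k[2k\pi,(2k+1)\pi]$; the components being mutually separated by gaps of length $\pi$, the image lies entirely in a single $[2k_0\pi,(2k_0+1)\pi]$, and since $0$ is a limit point of the image and each such component is closed, $k_0=0$ is forced. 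This gives $H(\lambda(D^2\Phi(x)))\geq\Theta$ throughout $\R^n\setminus\ol{E_\gamma}$ and completes the proof.
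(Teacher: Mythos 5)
Your argument is correct and reproduces the paper's Stage 1 exactly: the Hessian computation, the application of \lref{lem.skm}, the identification of $\sum_i\sigma_{k-1;i}(a)a_i^2x_i^2=\Xi_k(a,x)\sigma_k(a)r^2$, and the substitution of the ODE \eref{eqn.psi} combined with $(\Xi_k-\xi_k)c_k\le 0$, $\psi\ge1$, $\psi'\le0$ all match the paper step by step. Where you diverge is in upgrading $Z(\lambda(D^2\Phi))\ge0$ to $H(\lambda(D^2\Phi))\ge\Theta$. The paper gets there by noting that $L_\Theta$ is the outermost of the $N(n,\Theta)$ components of $\{Z=0\}$ in the $\mathds{1}$-direction, invoking \lref{lem.S-pN-cN} and \lref{lem.real-simple-root} to deduce that $Z(\lambda)>0$ when $H(\lambda)>\Theta$ and $Z(\lambda)<0$ when $\Theta-\pi<H(\lambda)<\Theta$ (an argument which implicitly leans on $\Theta\ge(n-2)\pi/2$, which caps $H-\Theta$ strictly below $\pi$), and then concluding by continuity of $\Phi$ together with the asymptotics \eref{eqn.laD2PhiTTa}. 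You instead observe the identity $\prod_i(1+\sqrt{-1}\lambda_i)=X+\sqrt{-1}Y=R\,e^{\sqrt{-1}H}$, hence $Z=R\sin(H-\Theta)$, and run a connectedness argument on the image of $x\mapsto H(\lambda(D^2\Phi(x)))-\Theta$ inside $\bigcup_k[2k\pi,(2k+1)\pi]$, pinning it to $[0,\pi]$ via the decay at infinity. Your route is cleaner and more self-contained: the formula $Z=R\sin(H-\Theta)$ gives \lref{lem.samesign} for free and makes the sign-dichotomy for $Z$ transparent without invoking the polynomial root count for $Z(ta)$; the cost is essentially nil, since you still need the same asymptotics from \lref{lem.psi}\textsl{-(ii)} to anchor the connectedness argument. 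Both proofs are valid; yours substitutes an explicit trigonometric identity for the paper's root-counting lemmas in the final step.
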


\begin{proof}
It is clear that $\phi'(r)=r\psi(r)$
and $\phi''(r)=\psi(r)+r\psi'(r)$.
Since
\[r^2=x^TAx=\sum_{i=1}^{n}a_ix_i^2,\]
we have
\[2r\p_{x_i}r=\p_{x_i}\left(r^2\right)=2a_ix_i
\quad\mathrm{and}\quad
\p_{x_i}r=\frac{a_ix_i}{r}.\]
Thus
\[\p_{x_i}\Phi(x)=\phi'(r)\p_{x_i}r
=\frac{\phi'(r)}{r}a_ix_i,\]
and
\begin{eqnarray*}
\p_{x_ix_j}\Phi(x)
&=&\frac{\phi'(r)}{r}a_i\delta_{ij}+
\frac{\phi''(r)-\frac{\phi'(r)}{r}}{r^2}(a_ix_i)(a_jx_j)\\
&=&\psi(r)a_i\delta_{ij}+\frac{\psi'(r)}{r}(a_ix_i)(a_jx_j).
\end{eqnarray*}
Therefore
\begin{equation}\label{eqn.D2Phi}
D^2\Phi(x)=\left(\psi(r)a_i\delta_{ij}
+\frac{\psi'(r)}{r}(a_ix_i)(a_jx_j)\right)_{n\times n}.
\end{equation}
Applying the formula \lref{lem.skm}, we compute
\begin{eqnarray*}
\sigma_k(\lambda(D^2\Phi))(x)
&=&\sigma_k(a)\psi(r)^k+\frac{\psi'(r)}{r}\psi(r)^{k-1}
\sum_{i=1}^n\sigma_{k-1;i}(a)a_i^2x_i^2\\
&=&\sigma_k(a)\psi^k+\Xi_k(a,x)\sigma_k(a)r\psi^{k-1}\psi'.
\end{eqnarray*}
Thus we deduce, for all $x\in\R^n\setminus\ol{E_\gamma}$, that
\begin{eqnarray*}
&~&Z(\lambda(D^2\Phi))(x)\\
&=&\cos\Theta\;Y(\lambda(D^2\Phi))
-\sin\Theta\;X(\lambda(D^2\Phi))\\
&=&\cos\Theta\sum_{0\leq 2j+1\leq n}(-1)^j\sigma_{2j+1}
(\lambda(D^2\Phi))
-\sin\Theta\sum_{0\leq 2j\leq n}(-1)^j\sigma_{2j}
(\lambda(D^2\Phi))\\
&=&\sum_{k=0}^{n}{c_k(\Theta)\sigma_k
(\lambda(D^2\Phi))}\\
&=&\sum_{k=0}^{n}{c_k(\Theta)\left(\sigma_k(a)\psi^k
+\Xi_k(a,x)\sigma_k(a)r\psi^{k-1}\psi'\right)}\\
&=&\sum_{k=0}^{n}{c_k(\Theta)\sigma_k(a)\psi^k}
+\sum_{k=0}^{n}{\Xi_k(a,x)c_k(\Theta)\sigma_k(a)r\psi^{k-1}\psi'}\\
&\geq&\sum_{k=0}^{n}{c_k(\Theta)\sigma_k(a)\psi^k}
+\sum_{k=0}^{n}{\xi_k(\Theta,a)c_k(\Theta)\sigma_k(a)r\psi^{k-1}\psi'}\\
&=&\sum_{k=0}^{n}{c_k(\Theta)\sigma_k(a)\psi^k}
+r\psi'\sum_{k=0}^{n}{\xi_k(\Theta,a)c_k(\Theta)\sigma_k(a)\psi^{k-1}}\\
&=&0,
\end{eqnarray*}
in which we have used the facts that
$\Xi_k(a,x)c_k(\Theta)\leq\xi_k(\Theta,a)c_k(\Theta)$ for all $k\geq0$,
according to \eqref{Xikck<=xikck}, and that
$\psi(r)\geq1>0$ and $\psi'(r)\leq0$ for all $r\geq1$,
according to \lref{lem.psi}-\textsl{(i)}.

Since $\psi(r)\ra1$ $(r\ra+\infty)$
and $r\psi'(r)\ra0$ $(r\ra+\infty)$ by \lref{lem.psi},
in light of \eref{eqn.D2Phi}, we have
\begin{equation}\label{eqn.laD2PhiTTa}
\begin{aligned}
\left.
\begin{aligned}
D^2\Phi(x)&\ra\mathrm{diag}\{a_1,a_2,...,a_n\}~\\
\lambda(D^2\Phi(x))&\ra a\\
H(\lambda(D^2\Phi(x)))&\ra H(a)=\Theta
\end{aligned}
\right\}
\text{~as $r_A(x)\ra+\infty$.}
\end{aligned}
\end{equation}

Note that, for any $\Theta\in[(n-2)\pi/2,n\pi/2)$,
$L_\Theta:=\{\lambda\in\R^n~|H(\lambda)=\Theta\}$
is one of the $N=N(n,\Theta)$ mutually disjoint components of
\[L_\Theta^\ast:=\{\lambda\in\R^n~|Z(\lambda)=\Theta\},\]
and it is also the last one (or say, the outermost one)
in the $\mathds{1}$-direction.
Since $Z(t\mathds{1})$ is a polynomial of $N$ order,
which has exactly $N$ real and simple roots,
and approaches to $+\infty$ as $t\ra+\infty$,
we conclude that
\begin{equation}\label{eqn.Z>0Z<0}
Z(\lambda)
\begin{cases}
>0& \text{if $H(\lambda)>\Theta$},\\
<0& \text{if $\Theta+\pi<H(\lambda)<\Theta$}.
\end{cases}
\end{equation}

Combing \eref{eqn.laD2PhiTTa}, \eref{eqn.Z>0Z<0}
with $Z(\lambda(D^2\Phi))\geq0$ we derived above,
by the continuity of $\Phi$, we obtain
\[\sum_{i=1}^{n}\arctan\lambda_i(D^2\Phi(x))
=H(\lambda(D^2\Phi(x)))
\geq\Theta,~\forall x\in\R^n\setminus\ol{E_\gamma}.\]
This completes the proof of \lref{lem.Phi-subsol}.
\end{proof}

\subsection{\lref{lem.sle} implies \tref{thm.sle-Theta>0}}\label{subsec.lemsle>thmsle}

In this subsection, we introduce the following lemma
which is a special and simple case of \tref{thm.sle-Theta>0}
with the additional condition that
the matrix $A$ is \textsl{diagonal} and the vector $b$ vanishes.
\begin{lemma}\label{lem.sle}
Let $D$ be a bounded strictly convex domain
in $\R^n$, $n\geq 3$, $\p D\in C^2$
and let $\varphi\in C^2(\p D)$.
Then for any given \textsl{diagonal matrix}
$A\in\mathscr{A}_\Theta$
with $(n-2)\pi/2\leq\Theta<n\pi/2$,
there exists a constant ${c_\ast}$
depending only on $n,D,\Theta,A$
and $\norm{\varphi}_{C^2(\p D)}$,
such that for every $c\geq{c_\ast}$,
there exists a unique viscosity solution
$u\in C^0(\R^n\setminus D)$ of
\begin{equation}\label{eqn.sle-ac}
\left\{
\begin{aligned}
\dps \sum_{i=1}^{n}\arctan\lambda_i&\left(D^2u\right)=\Theta
\quad\mathrm{in}~\R^n\setminus\ol{D},\\
\dps &\qquad~u=\varphi\quad\mathrm{on}~\p D,\\[0.2cm]
\dps \limsup_{|x|\ra+\infty}|x|^{m-2}
&\left|u(x)-\left(\frac{1}{2}x^{T}Ax+c\right)\right|<\infty,
\end{aligned}
\right.
\end{equation}
where $m=m(\Theta,\lambda(A))\in(2,n]$.
\end{lemma}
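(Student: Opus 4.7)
The plan is to apply Perron's method (\lref{lem.pm}) with supersolution $\overline{u}(x) := \frac{1}{2}x^T A x + c$ and a composite subsolution $\underline{u} := \max(w, \Phi)$. The function $\overline{u}$ is actually a classical solution of \eref{eqn.sle}, since $\lambda(D^2 \overline{u}) = \lambda(A) \in L_\Theta$. The candidate subsolution combines the boundary barriers from \lref{lem.Qxi}, collected into $w(x) := \sup_{\xi \in \partial D} Q_\xi(x)$, with the generalized radially symmetric subsolution $\Phi = \Phi_{\alpha,\beta,\gamma,A}$ from \lref{lem.Phi-subsol}. By \rref{rmk.Qxi}-(2) each $Q_\xi$ is a classical solution of \eref{eqn.sle}, so $w$ is a viscosity subsolution; and by \rref{rmk.Qxi}-(1) together with $Q_\xi(\xi) = \varphi(\xi)$ we have $w = \varphi$ on $\partial D$. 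Consequently $\underline{u}$ is a viscosity subsolution.

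I would fix $\gamma > 0$ small enough that $\overline{E_\gamma} \subset D$, so that $\Phi$ is smooth on all of $\R^n \setminus \overline{D}$, and pin down the constant $\alpha$ by the choice $\alpha := c - \mu_\gamma(\beta) + \frac{1}{2}\gamma^2$, which via \eref{eqn.phi-mu} turns $\Phi$ into the clean form
\[
\Phi(x) = \frac{1}{2}x^T A x + c - \mu_{r_A(x)}(\beta).
\]
Two consequences are then immediate: $\Phi \leq \overline{u}$ everywhere on $\R^n \setminus D$, and $\overline{u} - \Phi = \mu_{r_A(x)}(\beta) = O(r_A(x)^{2-m})$ at infinity by \cref{cor.mub}. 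Using that $\mu_R(\beta) \to \infty$ as $\beta \to \infty$ (again \cref{cor.mub}), the free parameter $\beta \geq 1$ can be chosen large, depending on $n, D, A, \Theta, c$ and $\norm{\varphi}_{C^2(\p D)}$, so that $\Phi \leq \varphi$ on $\partial D$; this forces $\underline{u} = \max(w, \Phi) = w = \varphi$ on $\partial D$. The threshold $c_\ast$ in the statement will emerge from the twin requirements that $\overline{u} \geq \varphi$ on $\partial D$ (so $\overline{u}$ dominates $\underline{u}$ there) and that the compatibility at infinity described below can be arranged.

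The main technical hurdle, and the place where the hypothesis $m(\Theta, \lambda(A)) > 2$ built into $\mathscr{A}_\Theta$ enters essentially, is to verify the two remaining assumptions of \lref{lem.pm}: $\underline{u} \leq \overline{u}$ throughout $\R^n \setminus \overline{D}$, and $(\underline{u} - \overline{u})(x) \to 0$ as $|x| \to \infty$. Both conditions are automatic on the region where $\underline{u} = \Phi$, thanks to the displayed identity, so the problem reduces to controlling $w$ in the unbounded part. Here one exploits that every $Q_\xi$ shares the \emph{same} quadratic part $\frac{1}{2}x^T A x$ as $\overline{u}$ and differs from it only by an affine piece governed by the uniform bound $|\bar{x}(\xi)| \leq K$ from \lref{lem.Qxi}, while the gap $\overline{u} - \Phi$ is positive and decays polynomially at rate $r^{-(m-2)}$ with constant tunable in $\beta$. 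One arranges the parameters so that $w \leq \Phi$ outside a sufficiently large ellipsoid $E_{R_0}$; then on the complement of $E_{R_0}$ one has $\underline{u} = \Phi$, and both remaining hypotheses of \lref{lem.pm} follow.

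Granted this, \lref{lem.pm} delivers a viscosity solution $u$ of \eref{eqn.sle} in $\R^n \setminus \overline{D}$ with $u = \varphi$ on $\partial D$ and $\underline{u} \leq u \leq \overline{u}$. Squeezing $\Phi \leq u \leq \overline{u}$ and using $\overline{u} - \Phi = O(r_A(x)^{2-m})$ together with the comparability of $r_A(x)$ with $|x|$ (as $A$ is a fixed positive definite diagonal matrix) yields the asymptotic $|u(x) - (\frac{1}{2}x^T A x + c)| = O(|x|^{2-m})$ required in \eref{eqn.sle-ac}. Uniqueness among viscosity solutions with this asymptotic follows from the comparison principle \lref{lem.cp}: the difference of two such solutions vanishes on $\partial D$ and tends to $0$ at infinity, forcing equality.
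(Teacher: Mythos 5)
Your plan is on the right track — the paper also uses Perron's method with the supersolution $\ol u(x)=\tfrac12 x^TAx+c$, the boundary barriers $Q_\xi$ from \lref{lem.Qxi}, and a generalized radially symmetric subsolution $\Phi$ from \lref{lem.Phi-subsol} normalized so that $\Phi=\tfrac12 x^TAx+c-\mu_{r_A(x)}(\beta)$ — but there is a genuine gap in how you stitch $w:=\sup_\xi Q_\xi$ and $\Phi$ together. You set $\ul u:=\max(w,\Phi)$ on all of $\R^n\setminus D$ and claim that the parameters can be arranged so that $w\leq\Phi$ outside a large ellipsoid $E_{R_0}$. That cannot be arranged in general. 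Each $Q_\xi$ has the \emph{same} quadratic part as $\ol u$, so $Q_\xi(x)-\ol u(x)=-x^TA\bar x(\xi)+\mathrm{const}(\xi)-c$ is an affine function of $x$. Whenever some $\bar x(\xi)\neq 0$ (the generic situation — \lref{lem.Qxi} only gives the bound $|\bar x(\xi)|\leq K$, not $\bar x(\xi)=0$), the quantity $w(x)-\ol u(x)$ grows linearly to $+\infty$ along the ray $x=-tA\bar x(\xi)$, while $\Phi(x)-\ol u(x)=-\mu_{r_A(x)}(\beta)\to 0$. Thus $w-\Phi\to+\infty$ along that ray, no matter how $\beta$ and $R_0$ are chosen, and the same computation shows $w>\ol u$ there as well. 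Consequently both hypotheses you want — $\ul u=\Phi$ outside $E_{R_0}$ and $\ul u\leq\ol u$ — fail for your global $\max(w,\Phi)$.

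The paper avoids this by defining $\ul u$ \emph{piecewise}: $\ul u:=\max(\Phi_{\beta(c)},Q)$ on $E_{\hat r}\setminus D$ and $\ul u:=\Phi_{\beta(c)}$ on $\R^n\setminus E_{\hat r}$, for a fixed large ellipsoid $E_{\hat r}\supset\supset D$. To make this a continuous viscosity subsolution, one first picks $\hat\beta$ large enough (using \eref{eqn.Phib}) that $\Phi_{\hat\beta}>Q$ on $\p E_{\hat r}$, and then sets $c_\ast$ so that for every $c\geq c_\ast$ the associated $\beta(c)$ (the unique $\beta$ with $\mu(\beta)=c$) satisfies $\beta(c)\geq\hat\beta$, forcing $\Phi_{\beta(c)}>Q$ in a neighborhood of $\p E_{\hat r}$, where the two pieces then agree. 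The inequality $Q\leq\ol u$ is \emph{not} checked directly: it is established only on the bounded annulus $E_{\hat r}\setminus D$ by applying the comparison principle \lref{lem.cp} to the classical solutions $Q_\xi$ and $\ol u$, using $Q_\xi\leq\ol u$ on $\p D$ (from $c\geq\bar c$ and \rref{rmk.Qxi}-(3)) and $Q_\xi<\Phi_{\beta(c)}\leq\ol u$ on $\p E_{\hat r}$. Your proof needs this truncation and this bounded-domain comparison argument; without them the verification that $\ul u\leq\ol u$ and that $\ul u-\ol u\to 0$ at infinity does not go through.
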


To prove \tref{thm.sle-Theta>0}, it suffices to prove \lref{lem.sle}.
Indeed, suppose that $D,\varphi,A$ and $b$
satisfy the hypothesis of \tref{thm.sle-Theta>0}.
Consider the decomposition $A=Q^T\Lambda Q$,
where $Q$ is an orthogonal matrix and $\Lambda$ is a diagonal matrix
which satisfies $\lambda(\Lambda)=\lambda(A)$.
Let
\[\tilde{x}:=Qx,\quad\wt{D}:=\left\{Qx|x\in D\right\}\]
and
\[\tilde{\varphi}(\tilde{x}):=\varphi(x)-b^Tx
=\varphi(Q^T\tilde{x})-b^TQ^T\tilde{x}.\]
By \lref{lem.sle}, we conclude that
there exists a constant ${c_\ast}$
depending only on $n,\wt{D},\Theta,\Lambda$
and $\norm{\tilde{\varphi}}_{C^2(\p\wt{D})}$,
such that for every $c\geq{c_\ast}$,
there exists a unique viscosity solution
$\tilde{u}\in C^0(\R^n\setminus\wt{D})$ of
\begin{equation}\label{eqn.sle-nc}
\left\{
\begin{aligned}
\dps \sum_{i=1}^{n}\arctan\lambda_i&\left(D^2\tilde{u}\right)=\Theta
\quad\mathrm{in}~\R^n\setminus\ol{\wt{D}},\\
\dps &\qquad~\tilde{u}=\tilde{\varphi}
\quad\mathrm{on}~\p\wt{D},\\[0.2cm]
\dps \limsup_{|\tilde{x}|\ra+\infty}|\tilde{x}|^{m-2}
&\left|\tilde{u}(\tilde{x})-\left(\frac{1}{2}\tilde{x}^{T}\Lambda\tilde{x}
+c\right)\right|<\infty,
\end{aligned}
\right.
\end{equation}
where $m=m(\Theta,\lambda(\Lambda))=m(\Theta,\lambda(A))\in(2,n]$.
Let
\[u(x):=\tilde{u}(\tilde{x})+b^Tx=\tilde{u}(Qx)+b^Tx
=\tilde{u}(\tilde{x})+b^TQ^T\tilde{x}.\]
We assert that $u$ is the solution
of \eref{eqn.sle-abc} in \tref{thm.sle-Theta>0}.
To show this, we need only to note that
\[D^2u(x)=Q^TD^2\tilde{u}(\tilde{x})Q,\quad
\lambda\left(D^2u(x)\right)=\lambda\left(D^2\tilde{u}(\tilde{x})\right);\]
\[u=\varphi\quad\mathrm{on}~\p D;\]
and
\begin{eqnarray*}
&&\dps |\tilde{x}|^{m-2}\left|\tilde{u}(\tilde{x})
-\left(\frac{1}{2}\tilde{x}^{T}\Lambda\tilde{x}+c\right)\right|\\
&=&\dps \left(x^TQ^TQx\right)^{(m-2)/2}\left|u(x)-b^Tx
-\left(\frac{1}{2}x^TQ^T\Lambda Qx+c\right)\right|\\
&=&\dps |x|^{m-2}
\left|u(x)-\left(\frac{1}{2}x^{T}Ax+b^Tx+c\right)\right|.
\end{eqnarray*}
Thus we have proved that
\tref{thm.sle-Theta>0} can be established by \lref{lem.sle}.

\subsection{Proof of \lref{lem.sle}}\label{subsec.pr-lem-sle}

Now we use the Perron's method to prove \lref{lem.sle}.
The procedures employed here are standard
and quite similar to the ones in our previous paper \cite{LL16}.
For the reader's convenience, we give the full details.
\begin{proof}[\textbf{Proof of \lref{lem.sle}}]
We may assume without loss of generality that
$E_1\subset\subset D\subset\subset E_{\bar{r}}
\subset\subset E_{\hat{r}}$
and $a:=(a_1,a_2,...,a_n):=\lambda(A)$ with
$0<a_1\leq a_2\leq...\leq a_n$.
The proof now will be divided into three steps.

\medskip

\emph{Step 1.}\quad
Let
\[\eta:=\inf_{\substack{x\in \ol{E_{\bar{r}}}\setminus D\\
\xi\in\p D}}Q_\xi(x), \quad
Q(x):=\sup_{\xi\in\p D}Q_\xi(x)\]
and
\[\Phi_\beta(x):=\eta+\int_{\bar{r}}^{r_A(x)}\tau\psi(\tau,\beta)d\tau,
\quad\forall r_A(x)\geq 1,~\forall \beta\geq 1,\]
where $Q_{\xi}(x)$ and $\psi(r,\beta)$ are given
by \lref{lem.Qxi} and \lref{lem.psi}, respectively.
Then we have
\begin{enumerate}[\quad(1)]
\item Since $Q$ is the supremum of
a collection of smooth solutions $\{Q_\xi\}$ of \eref{eqn.sle},
it is a continuous subsolution of \eref{eqn.sle}, i.e.,
\[\sum_{i=1}^{n}\arctan\lambda_i\left(D^2Q\right)\geq\Theta\]
in $\R^n\setminus \ol{D}$ in the viscosity sense
(see \cite[Proposition 2.2]{Ish89}).

\item $Q=\varphi$ on $\p D$.
To prove this we need only to show that
for any $\xi\in\p D$, $Q(\xi)=\varphi(\xi)$.
This is obvious since $Q_\xi\leq\varphi$ on $\ol{D}$
and $Q_\xi(\xi)=\varphi(\xi)$,
according to \rref{rmk.Qxi}-\textsl{(1)}.

\item By \lref{lem.Phi-subsol},
$\Phi_\beta$ is a smooth subsolution of \eref{eqn.sle}
in $\R^n\setminus\ol{D}$.

\item $\Phi_\beta\leq\varphi$ on $\p D$
and $\Phi_\beta\leq Q$ on $\ol{E_{\bar{r}}}\setminus D$.
To show them we note that
$\Phi_\beta(x)$ is strictly increasing
with respect to $r_A(x)$ since
$\psi(r,\beta)\geq 1>0$ by \lref{lem.psi}-\textsl{(i)}.
Invoking $\Phi_\beta=\eta$ on $\p E_{\bar{r}}$
and $\eta\leq Q$ on $\ol{E_{\bar{r}}}\setminus D$
by their definitions, we have $\Phi_\beta\leq\eta\leq Q$
on $\ol{E_{\bar{r}}}\setminus D$.
On the other hand,
according to \rref{rmk.Qxi}-\textsl{(1)}, we have
$Q_\xi\leq\varphi$ on $\ol{D}$ which implies that
$\eta\leq\varphi$ on $\ol{D}$.
Combining these two aspects we deduce that
$\Phi_\beta\leq\eta\leq\varphi$ on $\p D$.

\item $\Phi_\beta(x)$ is strictly increasing
with respect to $\beta$ and
\begin{equation}\label{eqn.Phib}
\lim_{\beta\ra+\infty}\Phi_\beta(x)=+\infty,
~\forall r_A(x)\geq 1,
\end{equation}
by the definition of $\Phi_\beta(x)$
and \lref{lem.psi}-\textsl{(iii)}.

\item As showed in \eref{eqn.phi-mu} and \eref{eqn.phi-O},
for any $\beta\geq 1$, we have
\begin{eqnarray*}
\Phi_\beta(x)
&=&\eta+\int_{\bar{r}}^{r_A(x)}\tau \psi(\tau,\beta)d\tau\\
&=&\eta+\frac{1}{2}(r_A(x)^2-\bar{r}^2)
+\int_{\bar{r}}^{r_A(x)}\tau\big(\psi(\tau,\beta)-1\big)d\tau\\
&=&\frac{1}{2}r_A(x)^2+\left(\eta-\frac{1}{2}\bar{r}^2
+\mu_{\bar{r}}(\beta)\right)-\mu_{r_A(x)}(\beta)\\
&=&\frac{1}{2}r_A(x)^2+\mu(\beta)-\mu_{r_A(x)}(\beta)\\
&=&\frac{1}{2}x^TAx+\mu(\beta)+O\left(|x|^{-m+2}\right)
~(|x|\ra+\infty),
\end{eqnarray*}
where we set
\[\mu(\beta):=\eta-\frac{1}{2}\bar{r}^2+\mu_{\bar{r}}(\beta),\]
and used the fact that $x^TAx=O(|x|^2)~(|x|\ra+\infty)$
since $\lambda(A)\in\Gamma^+$.
\end{enumerate}

\medskip

\emph{Step 2.}\quad
For fixed $\hat{r}>\bar{r}$, there exists $\hat{\beta}>1$ such that
\[\min_{\p E_{\hat{r}}}\Phi_{\hat{\beta}}
>\max_{\p E_{\hat{r}}}Q,\]
in light of \eref{eqn.Phib}.
Thus we obtain
\begin{equation}\label{eqn.PhiQ}
\Phi_{\hat{\beta}}>Q\quad\mathrm{on}~\p E_{\hat{r}}.
\end{equation}

Let
\[{c_\ast}:=\max\left\{\eta,\mu(\hat{\beta}),\bar{c}\right\},\]
where the $\bar{c}$ comes from \rref{rmk.Qxi}-\textsl{(3)},
and hereafter fix $c\geq{c_\ast}$.

By \lref{lem.psi} and \cref{cor.mub} we deduce that
\[\psi(r,1)\equiv 1\Ra\mu_{\bar{r}}(1)=0
\Ra\mu(1)=\eta-\frac{1}{2}\bar{r}^2<\eta\leq{c_\ast}\leq c,\]
and
\[\lim_{\beta\ra+\infty}\mu_{\bar{r}}(\beta)=+\infty
\Ra\lim_{\beta\ra+\infty}\mu(\beta)=+\infty.\]
On the other hand,
it follows from \cref{cor.mub}
that $\mu(\beta)$ is continuous and
strictly increasing with respect to $\beta$
\big{(}which indicates that the inverse of $\mu(\beta)$ exists
and $\mu^{-1}$ is strictly increasing\big{)}.
Thus there exists a unique $\beta(c)$ such that $\mu(\beta(c))=c$.
Then we have
\[\Phi_{\beta(c)}(x)=\frac{1}{2}r_A(x)^2+c-\mu_{r_A(x)}(\beta(c))
=\frac{1}{2}x^TAx+c+O\left(|x|^{-m+2}\right)~(|x|\ra+\infty),\]
and
\[\beta(c)=\mu^{-1}(c)\geq\mu^{-1}({c_\ast})\geq\hat{\beta}.\]
Invoking the monotonicity of $\Phi_\beta$
with respect to $\beta$ and \eref{eqn.PhiQ}, we obtain
\begin{equation}\label{eqn.PhigQ}
\Phi_{\beta(c)}\geq\Phi_{\hat{\beta}}>Q\quad\mathrm{on}
~\p E_{\hat{r}}.
\end{equation}
Note that we already know
\[\Phi_{\beta(c)}\leq Q\quad\mathrm{on}
~\ol{E_{\bar{r}}}\setminus D,\]
from (4) of \emph{Step 1}.

\medskip

Let
\begin{equation*}
\ul{u}(x):=
\begin{cases}
\max\left\{\Phi_{\beta(c)}(x),Q(x)\right\},& x\in E_{\hat{r}}\setminus D,\\
\Phi_{\beta(c)}(x),& x\in \R^n\setminus E_{\hat{r}}.
\end{cases}
\end{equation*}
Then we have
\begin{enumerate}[\quad(1)]
\item $\ul{u}$ is continuous and satisfies
\[\frac{\sigma_k(\lambda(D^2{\ul{u}}))}
{\sigma_l(\lambda(D^2{\ul{u}}))}\geq1\]
in $\R^n\setminus\ol{D}$ in the viscosity sense,
by (1) and (3) of \emph{Step 1}.

\item $\ul{u}=Q=\varphi$ on $\p D$,
by (2) of \emph{Step 1}.

\item If $r_A(x)$ is large enough, then
\[\ul{u}(x)=\Phi_{\beta(c)}(x)
=\frac{1}{2}x^TAx+c+O\left(|x|^{-m+2}\right)~(|x|\ra+\infty).\]
\end{enumerate}

\medskip

\emph{Step 3.}\quad
Let
\[\ol{u}(x):=\frac{1}{2}x^TAx+c,
~\forall x\in\R^n.\]
Then $\ol{u}$ is obviously a supersolution and
\[\lim_{|x|\ra+\infty}\left(\ul{u}-\ol{u}\right)(x)=0.\]


To use the Perron's method to establish \lref{lem.sle},
we now need only to prove that
\[\ul{u}\leq \ol{u}\quad\mathrm{in}~\R^n\setminus D.\]
In fact, since
\[\mu_{r_A(x)}(\beta)\geq 0,
\quad\forall x\in\R^n\setminus E_1,
~\forall\beta\geq 1,\]
according to \cref{cor.mub}, we have
\begin{equation}\label{eqn.Phiolu}
\Phi_{\beta(c)}(x)=\frac{1}{2}x^TAx+c-\mu_{r_A(x)}(\beta(c))
\leq\frac{1}{2}x^TAx+c=\ol{u}(x),~\forall x\in\R^n\setminus D.
\end{equation}
On the other hand, for every $\xi\in\p D$, since
\[Q_\xi(x)\leq\frac{1}{2}x^TAx+\bar{c}
\leq\frac{1}{2}x^TAx+{c_\ast}
\leq\frac{1}{2}x^TAx+c=\ol{u}(x),
~\forall x\in\p D,\]
and
\[Q_\xi\leq Q<\Phi_{\beta(c)}\leq\ol{u}
\quad\mathrm{on}~\p E_{\hat{r}}\]
follows from \eref{eqn.PhigQ} and \eref{eqn.Phiolu},
we obtain
\[Q_\xi\leq\ol{u}\quad\mathrm{on}
~\p\left(E_{\hat{r}}\setminus D\right).\]
In view of
\[\sum_{i=1}^{n}\arctan\lambda_i\left(D^2{Q_\xi}\right)
=\Theta=\sum_{i=1}^{n}\arctan\lambda_i\left(D^2{\ol{u}}\right)
\quad\mathrm{in}~E_{\hat{r}}\setminus D,\]
we deduce from the comparison principle that
\[Q_\xi\leq\ol{u}\quad\mathrm{in}~E_{\hat{r}}\setminus D.\]
Hence
\begin{equation}\label{eqn.Qu}
Q\leq\ol{u}\quad\mathrm{in}~E_{\hat{r}}\setminus D.
\end{equation}
Combining \eref{eqn.Phiolu} and \eref{eqn.Qu},
by the definition of $\ul{u}$, we get
\[\ul{u}\leq \ol{u}\quad\mathrm{in}~\R^n\setminus D.\]
This finishes the proof of \lref{lem.sle}.
\end{proof}

\section{Appendix: proof of \lref{lem.XhY-YhX=SS}}\label{sec.appendix}

\begin{lemma}[\lref{lem.XhY-YhX=SS}]\label{lem.A.XhY-YhX=SS}
For any $a\in\R^n$,
\begin{equation}\label{eqn.wZ=SS}
\wh Z_\ast(a):=X(a)\wh Y(a)-Y(a)\wh X(a)
=\sum_{k=1}^{n}
{\sum_{1\leq i_1,i_2,...,i_{k}\leq n}
{a_{i_1}^2a_{i_2}^2...a_{i_{k-1}}^2a_{i_k}}}.
\end{equation}
\end{lemma}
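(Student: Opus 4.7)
My plan is to exploit the generating-function identity
\[\prod_{\alpha=1}^{n}\bigl(1+\sqrt{-1}\,a_\alpha\bigr)=X(a)+\sqrt{-1}\,Y(a),\]
which is immediate upon expanding the product as $\sum_{k=0}^{n}(\sqrt{-1})^{k}\sigma_k(a)$ and separating terms by the parity of $k$ via $(\sqrt{-1})^{2j}=(-1)^j$. Setting $f(t):=\prod_\alpha(1+\sqrt{-1}\,ta_\alpha)=X(ta)+\sqrt{-1}\,Y(ta)$, I observe that direct differentiation at $t=1$ gives $f'(1)=\wh X(a)+\sqrt{-1}\,\wh Y(a)$ (matching the defining $t$-derivative interpretations of $\wh X,\wh Y$ already noted in \ssref{subsec.XYZ}), while logarithmic differentiation of the product form gives
\[f'(1)=f(1)\sum_{\alpha=1}^{n}\frac{\sqrt{-1}\,a_\alpha}{1+\sqrt{-1}\,a_\alpha}.\]

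Next I multiply both expressions for $f'(1)$ by the complex conjugate $\overline{f(1)}=X(a)-\sqrt{-1}\,Y(a)$ and read off imaginary parts. On the one side, $\overline{f(1)}f'(1)=(X\wh X+Y\wh Y)+\sqrt{-1}(X\wh Y-Y\wh X)$, whose imaginary part is exactly $\wh Z_\ast(a)$. On the other side, $|f(1)|^2=f(1)\overline{f(1)}=\prod_\alpha(1+a_\alpha^2)$ by pairing conjugate factors, and a one-line computation gives $\mathrm{Im}\bigl(\tfrac{\sqrt{-1}\,a_\alpha}{1+\sqrt{-1}\,a_\alpha}\bigr)=\tfrac{a_\alpha}{1+a_\alpha^2}$. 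Taking imaginary parts therefore yields
\[\wh Z_\ast(a)=\prod_\alpha(1+a_\alpha^2)\sum_\alpha\frac{a_\alpha}{1+a_\alpha^2}=\sum_{\alpha=1}^{n}a_\alpha\prod_{\beta\neq\alpha}(1+a_\beta^2).\]

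The final step is combinatorial: expanding each $\prod_{\beta\neq\alpha}(1+a_\beta^2)=\sum_{S\subseteq\{1,\ldots,n\}\setminus\{\alpha\}}\prod_{\beta\in S}a_\beta^2$ and reindexing by $k:=|S|+1$, $\{i_1,\ldots,i_{k-1}\}:=S$, $i_k:=\alpha$, recovers the double sum on the right-hand side of \eqref{eqn.wZ=SS}. I anticipate no serious analytic obstruction---every step is a one-line complex-algebraic manipulation, and the formula $\wh Z_\ast=\sum_\alpha a_\alpha\prod_{\beta\neq\alpha}(1+a_\beta^2)$ also makes transparent the positivity statement $\wh Z_\ast(a)>0$ for $a\in\Gamma^+$ that is needed in \cref{cor.XhY-YhX>0=>CY-SX>0}. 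The only subtle point is correctly parsing the author's summation convention in the target identity, namely reading $\{i_1,\ldots,i_{k-1}\}$ as an unordered $(k-1)$-subset and $i_k$ as a distinguished further index distinct from these; under this reading the combinatorial rewriting in the last step is a routine check (which one may verify termwise for small $n$ as a sanity test).
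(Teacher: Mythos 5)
Your argument is correct, and it takes a genuinely different and substantially shorter route than the paper's. The paper proves \lref{lem.XhY-YhX=SS} by a four-step combinatorial computation: it rearranges the double sum $\sum_{i,j}(-1)^{i+j}(2j-2i+1)\sigma_{2i}\sigma_{2j+1}$ by setting $p=i+j$, establishes the binomial identity $\sum_{q=0}^{Q}(-1)^q(2q+1)C_{2Q+1}^{Q-q}=\delta_{Q,0}$, derives decomposition formulas of the form $\sigma_j\sigma_k=\sum_h C_{j+k-2h}^{j-h}S_{j+k-h}^{h}$ and its companion for $j+k\geq n$, and then combines these to show that each inner sum collapses to a single $S_{p+1}^{p}$. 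You instead recognize $\prod_\alpha(1+\sqrt{-1}\,a_\alpha)=X(a)+\sqrt{-1}\,Y(a)$ and its parametrized version $f(t)=X(ta)+\sqrt{-1}\,Y(ta)$ as a complex generating function, compute $f'(1)$ two ways (termwise giving $\wh X+\sqrt{-1}\,\wh Y$, and by logarithmic differentiation giving $f(1)\sum_\alpha\sqrt{-1}\,a_\alpha/(1+\sqrt{-1}\,a_\alpha)$), multiply by $\overline{f(1)}$, and match imaginary parts to land on the clean intermediate form $\wh Z_\ast(a)=\sum_{\alpha}a_\alpha\prod_{\beta\neq\alpha}(1+a_\beta^2)$; expanding each product and reindexing by $k=|S|+1$ recovers the stated double sum. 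Your reading of the summation convention---$\{i_1,\ldots,i_{k-1}\}$ an unordered subset, $i_k$ a further distinct index---is the right one, as confirmed by the author's own normalization $S_L^M(\mathds{1})=C_n^LC_L^M$ in Step~3 of the appendix. The generating-function route buys both brevity and transparency: the factored form $\sum_\alpha a_\alpha\prod_{\beta\neq\alpha}(1+a_\beta^2)$ makes the positivity needed in \cref{cor.XhY-YhX>0=>CY-SX>0} immediate, and it exposes the structural reason the identity holds, whereas the paper's argument is elementary and real-variable but conceals the mechanism behind several binomial manipulations and the auxiliary polynomials $S_L^M$.
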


Recall that we have defined
\begin{eqnarray*}
X(\lambda)&:=&\sum_{0\leq 2j\leq n}(-1)^j\sigma_{2j}(\lambda)\\
&=&1-\sigma_2(\lambda)+\sigma_4(\lambda)-...,
\end{eqnarray*}
\begin{eqnarray*}
Y(\lambda)&:=&\sum_{0\leq 2j+1\leq n}(-1)^j\sigma_{2j+1}(\lambda)\\
&=&\sigma_1(\lambda)-\sigma_3(\lambda)+\sigma_5(\lambda)-...,
\end{eqnarray*}
\begin{eqnarray*}
\wh X(\lambda)
&:=&\sum_{0\leq 2j\leq n}(-1)^j\cdot(2j)\cdot\sigma_{2j}(\lambda)\\
&=&-2\sigma_2(\lambda)+4\sigma_4(\lambda)-...
\end{eqnarray*}
and
\begin{eqnarray*}
\wh Y(\lambda)
&:=&\sum_{0\leq 2j+1\leq n}(-1)^j\cdot(2j+1)\cdot\sigma_{2j+1}(\lambda)\\
&=&\sigma_1(\lambda)-3\sigma_3(\lambda)+5\sigma_5(\lambda)-....
\end{eqnarray*}

\begin{proof}[\textbf{Proof of \lref{lem.A.XhY-YhX=SS}}]
The proof will be divided into four steps.

\emph{Step 1.}\quad
We have
\begin{eqnarray}
\wh Z_\ast(a)
&=&\sum_{0\leq 2i\leq n}(-1)^i\sigma_{2i}(a)
\sum_{0\leq 2j+1\leq n}(-1)^j(2j+1)\sigma_{2j+1}(a)\nonumber\\
&&\quad-\sum_{0\leq 2i\leq n}(-1)^i(2i)\sigma_{2i}(a)
\sum_{0\leq 2j+1\leq n}(-1)^j\sigma_{2j+1}(a)\nonumber\\
&=&\sum_{\substack{0\leq 2i\leq n\\0\leq 2j+1\leq n}}
(-1)^{i+j}(2j-2i+1)\sigma_{2i}(a)\sigma_{2j+1}(a)\nonumber\\
&=&\sum_{p=0}^{\wh n}\sum_{q=0}^{p}(-1)^q(2q+1)
\sigma_{p-q}(a)\sigma_{p+q+1}(a)\nonumber\\
&&+\sum_{p=\wh n+1}^{n-1}\sum_{q=0}^{n-1-p}(-1)^q(2q+1)
\sigma_{p-q}(a)\sigma_{p+q+1}(a),\label{eqn.pwhn}
\end{eqnarray}
where
\begin{equation}\label{eqn.whn}
\wh n:=
\begin{cases}
\frac{n-1}{2} &\text{if $n$ is odd},\\
\frac{n-2}{2} &\text{if $n$ is even},
\end{cases}
\end{equation}
and the last equality follows from the rearrangement
(see for example \textsl{Figure} \ref{figure}) realized by
\begin{equation}
\begin{cases}
p=i+j,\\
\dps q=
\begin{cases}
j-i& \text{if $i\leq j$},\\
i-j-1& \text{if $i>j$},
\end{cases}
\end{cases}
\end{equation}
or, equivalently, by
\begin{equation}
\left.
\begin{aligned}
i=\frac{p-q}{2}\\
j=\frac{p+q}{2}
\end{aligned}
\right\}
\text{if $p+q$ is even},
\end{equation}
in the opposite direction.
Note that if $0\leq i\leq j$,
then $p:=i+j\geq0$, $q:=j-i\geq0$, $2j+1>2i$,
$(-1)^{i+j}=(-1)^{j-i}=(-1)^{q}$, $2j-2i+1=2q+1$,
$2i=p-q$ and $2j+1=p+q+1$;
if $0\leq j<i$,
then $p:=i+j\geq0$, $q:=i-j-1\geq0$, $2j+1<2i$,
$(-1)^{i+j}=-(-1)^{i-j-1}=-(-1)^{q}$,
$2j-2i+1=-(2i-2j-1)=-(2q+1)$,
$2i=p+q+1$ and $2j+1=p-q$.

\begin{figure}[!h]
\centering
\includegraphics[width=0.45\textwidth]{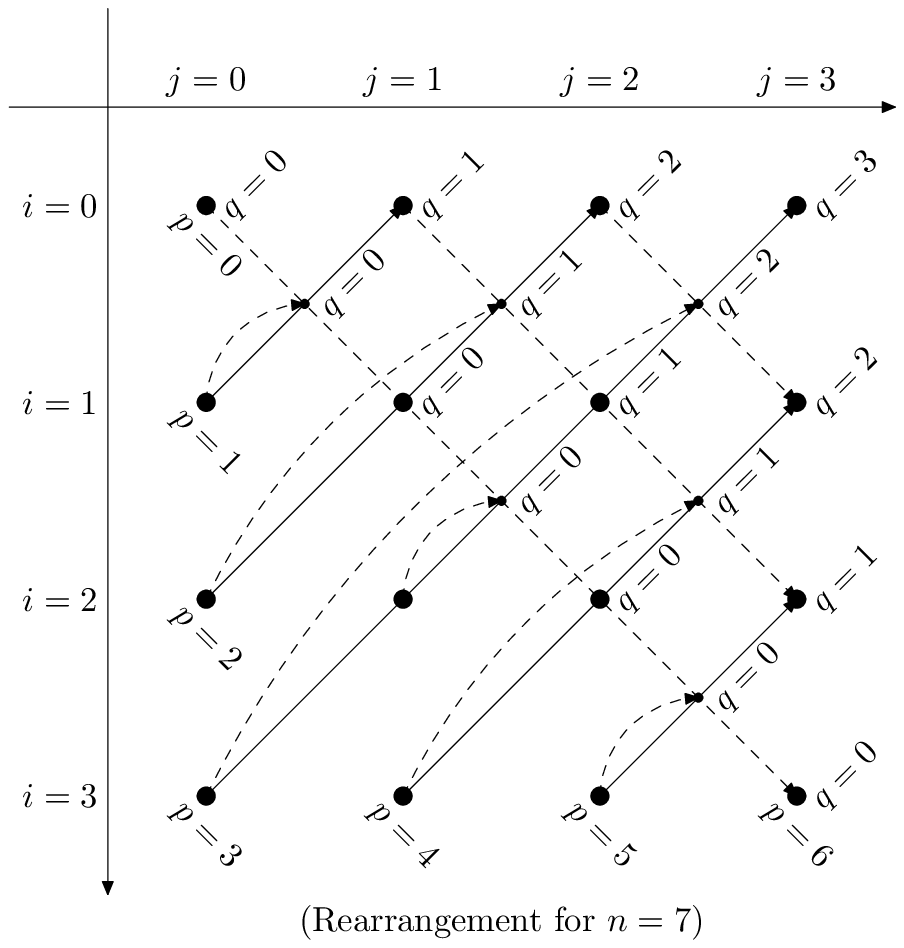}
\includegraphics[width=0.50\textwidth]{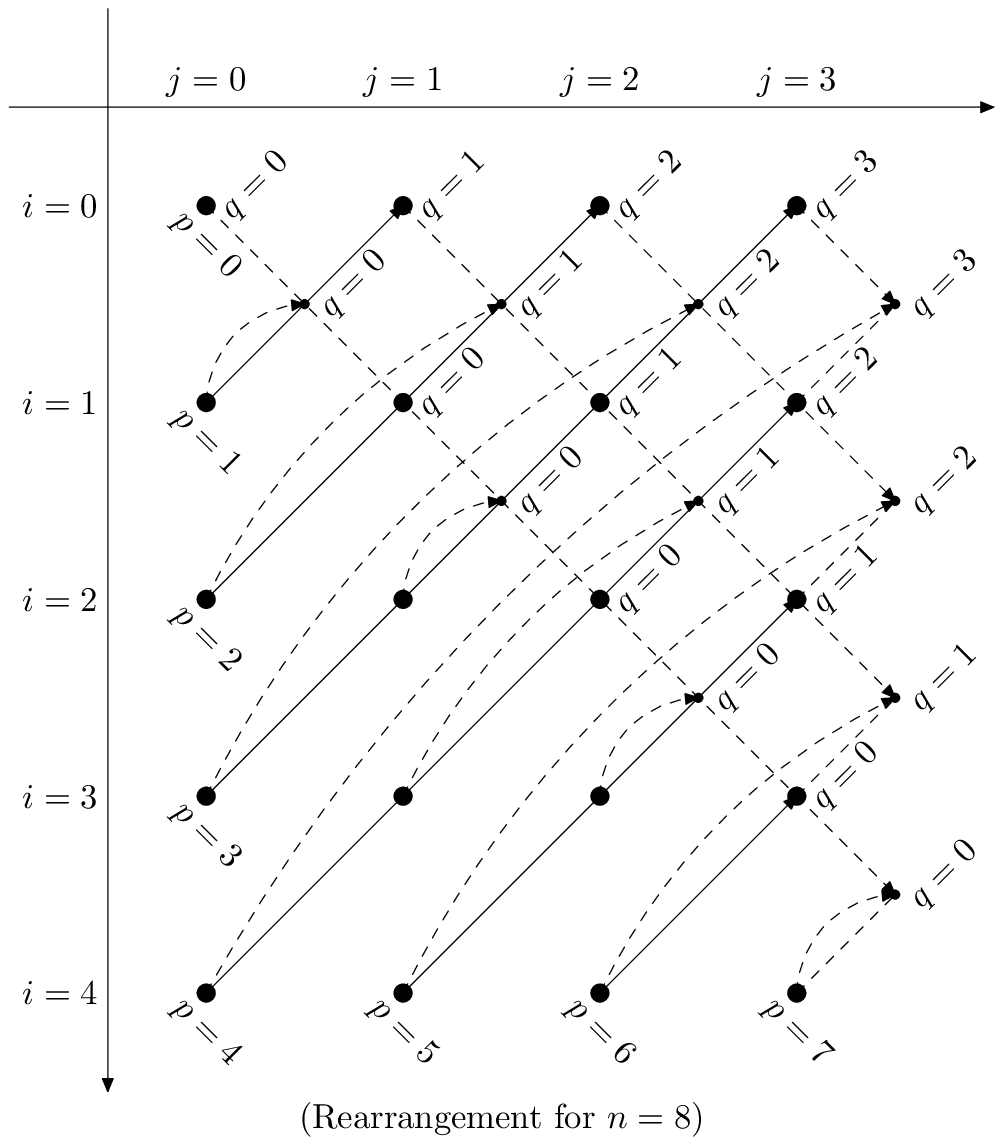}
\caption{Rearrangement for odd and/or even dimension}\label{figure}
\end{figure}

Thus, in order to calculate $\wh Z_\ast(a)$,
we need only to consider
\begin{equation}\label{eqn.sumqgtO}
\sum_{q=0}^{\wh p}(-1)^q(2q+1)
\sigma_{p-q}(a)\sigma_{p+q+1}(a)
\end{equation}
for all $0\leq p\leq n-1$, where
\begin{equation*}
\wh p:=
\begin{cases}
p, &p\leq\frac{n-1}{2},\\
n-1-p, &p>\frac{n-1}{2}.
\end{cases}
\end{equation*}
Note that the $\wh p$ here,
unlike the $\wh n$ in \eref{eqn.whn},
is independent of the parity (oddness or evenness)
of the dimension $n$.

\medskip

The calculation of \eref{eqn.sumqgtO}
will be given in \textsl{Step 4}.
To do this, we need some other preparations
which will be presented in the following two steps.

\medskip

\emph{Step 2.}\quad
We now prove that
\begin{equation}\label{eqn.QIO}
\sum_{q=0}^{Q}{(-1)^q(2q+1)C_{2Q+1}^{Q-q}}=
\begin{cases}
0& \text{if $Q\geq1$},\\
1& \text{if $Q=0$}.
\end{cases}
\end{equation}

For $Q=0$, \eref{eqn.QIO} is obviously true.
So we may assume that $Q\geq1$.
Then
\begin{eqnarray*}
&&\sum_{q=0}^{Q}{(-1)^q(2q+1)C_{2Q+1}^{Q-q}}\\
&=&(2Q+1)\left(\sum_{q=0}^{Q}{(-1)^{q}C_{2Q}^{Q+q}}
+\sum_{q=0}^{Q}{(-1)^{q+1}C_{2Q}^{Q+q+1}}\right)\\
&=&(2Q+1)\left(C_{2Q}^{Q}
+\sum_{q=1}^{Q}{(-1)^{q}C_{2Q}^{Q+q}}
+\sum_{q=0}^{Q-1}{(-1)^{q+1}C_{2Q}^{Q+q+1}}\right)\\
&=&(2Q+1)\left(C_{2Q}^{Q}
+2\sum_{q=1}^{Q}{(-1)^{q}C_{2Q}^{Q+q}}\right)\\
&=&0,
\end{eqnarray*}
where we have used the formulas
\begin{equation}\label{eqn.cqchai}
(2q+1)C_{2Q+1}^{Q-q}
=(2Q+1)\left(C_{2Q}^{Q+q}-C_{2Q}^{Q+q+1}\right),
~\forall 0\leq q\leq Q,
\end{equation}
and
\begin{equation}\label{eqn.cqq}
\sum_{q=1}^{Q}{(-1)^{q}C_{2Q}^{Q+q}}
=-\frac{1}{2}C_{2Q}^{Q},
~\forall Q\geq1.
\end{equation}
In order to complete the proof of \eref{eqn.QIO},
we must prove \eref{eqn.cqchai} and \eref{eqn.cqq}.
To verify assertion \eref{eqn.cqchai}, we check directly that
\begin{eqnarray*}
&&(2Q+1)\left(C_{2Q}^{Q+q}-C_{2Q}^{Q+q+1}\right)\\
&=&(2Q+1)\left(\frac{(2Q)!}{(Q+q)!\,(Q-q)!}
-\frac{(2Q)!}{(Q+q+1)!\,(Q-q-1)!}\right)\\
&=&\frac{(2Q+1)!\,(Q+q+1-(Q-q))}{(Q+q+1)!\,(Q-q)!}\\
&=&\frac{(2Q+1)!\,(2q+1)}{(Q+q+1)!\,(Q-q)!}\\
&=&(2q+1)C_{2Q+1}^{Q-q}.
\end{eqnarray*}
To prove assertion \eref{eqn.cqq},
we first note that this is equivalent to
\begin{equation}\label{eqn.cqqq}
\sum_{q=0}^{Q-1}{(-1)^{q}C_{2Q}^{q}}
=\frac{1}{2}(-1)^{Q+1}C_{2Q}^{Q},
~\forall Q\geq1,
\end{equation}
since
\begin{eqnarray*}
\sum_{q=1}^{Q}{(-1)^{q}C_{2Q}^{Q+q}}
&=&\sum_{q=1}^{Q}{(-1)^{q}C_{2Q}^{Q-q}}
=\sum_{j=0}^{Q-1}{(-1)^{Q-j}C_{2Q}^{j}}\\
&=&(-1)^{Q}\sum_{j=0}^{Q-1}{(-1)^{j}C_{2Q}^{j}}
=(-1)^{Q}\sum_{q=0}^{Q-1}{(-1)^{q}C_{2Q}^{q}}.
\end{eqnarray*}
Next, to show \eref{eqn.cqqq}, we need only to note that
\begin{eqnarray*}
0=(1+x)^{2Q}\big{|}_{x=-1}
&=&\sum_{q=0}^{2Q}{(-1)^{q}C_{2Q}^{q}}\\
&=&\sum_{q=0}^{Q-1}{(-1)^{q}C_{2Q}^{q}}+(-1)^{Q}C_{2Q}^{Q}
+\sum_{q=Q+1}^{2Q}{(-1)^{q}C_{2Q}^{q}}\\
&=&2\sum_{q=0}^{Q-1}{(-1)^{q}C_{2Q}^{q}}+(-1)^{Q}C_{2Q}^{Q},
\end{eqnarray*}
where the last equality holds since
\[
\sum_{q=Q+1}^{2Q}{(-1)^{q}C_{2Q}^{q}}
=\sum_{j=0}^{Q-1}{(-1)^{2Q-j}C_{2Q}^{2Q-j}}
=\sum_{q=0}^{Q-1}{(-1)^{q}C_{2Q}^{q}}.
\]
Thus the proof of \eref{eqn.QIO} is complete.

\emph{Step 3.}\quad
For any $0\leq j\leq k\leq n$ and any $a=(a_1,a_2,...,a_n)\in\R^n$, set
\[S_k^j:=S_k^j(a):=\sum_{1\leq i_1,i_2,...,i_k\leq n}
{a_{i_1}^2a_{i_2}^2...a_{i_j}^2a_{i_{j+1}}...a_{i_k}}.\]
Note that $S_k^0=\sigma_k$
and $S_k^j(\mathbb{\mathds{1}})=C_{n}^{k}C_{k}^{j}$,
where $\mathds{1}:=(1,1,...,1)\in\R^n$,
which means that $S_k^j$ is
a polynomial of $C_{n}^{k}C_{k}^{j}$ terms
and can be viewed as a generalization
of the elementary symmetric polynomial $\sigma_k$.

The main purpose in this step is
to prove the following elementary decompositions
\begin{equation}\label{eqn.sjkl}
\sigma_j(a)\sigma_k(a)=\sum_{h=0}^{j}
{C_{j+k-2h}^{j-h}S_{j+k-h}^{h}(a)},
~\forall 0\leq j\leq k\leq n,~j+k\leq n,
\end{equation}
and
\begin{equation}\label{eqn.sjkg}
\sigma_j(a)\sigma_k(a)=\sum_{h=0}^{n-k}
{C_{2n-j-k-2h}^{n-j-h}S_{n-h}^{j+k-n+h}(a)},
~\forall 0\leq j\leq k\leq n,~j+k\geq n,
\end{equation}
for any $a\in\R^n$.

To do this, we first observe that
$\sigma_j(a)\sigma_k(a)$ is a polynomial
with terms of the $S_L^M(a)$ types,
where $0\leq L\leq M\leq n$ and $L+M=j+k$.
Thus for $j+k\leq n$,
any term of $\sigma_j\sigma_k$ can only be one of the elements of
\[\set{S_{j+k}^{0},S_{j+k-1}^{1},S_{j+k-2}^{2},
...,S_{j+k-h}^{h},...,S_{k+1}^{j-1},S_{k}^{j}},\]
and, for $j+k\geq n$,
any term of $\sigma_j\sigma_k$ can only be one of the elements of
\[\set{S_{n}^{j+k-n},S_{n-1}^{j+k-n+1},S_{n-2}^{j+k-n+2},
...,S_{n-h}^{j+k-n+h},...,S_{k+1}^{j-1},S_{k}^{j}}.\]

Hence it remains to determine the coefficient of each $S_L^M(a)$.
To do so, we first recognize that,
for any fixed $a=(a_1,a_2,...,a_n)\in\R^n$,
there are $N$ terms of $S_L^M(a)$
in the expansion of $\sigma_j(a)\sigma_k(a)$,
if and only if
there are $N$ terms of $a_1^2a_2^2...a_M^2a_{M+1}...a_{L}$
in the expansion of $\sigma_j(a)\sigma_k(a)$.
But we know that,
for each $a_1^2a_2^2...a_M^2a_{M+1}...a_{L}$,
its divisor $a_1a_2...a_Ma_{i_1}a_{i_2}...a_{i_{j-M}}$
must come from the terms in the expansion of $\sigma_j(a)$,
where
$$i_1,i_2,...,i_{j-M}\in\set{M+1,M+2,...,L}$$
and are different from each other.
This indicates that the number of $S_L^M(a)$
in the expansion of $\sigma_j(a)\sigma_k(a)$
is exactly $C_{L-M}^{j-M}$.
(Note that we can also consider the divisor
\[a_1a_2...a_Ma_{s_1}a_{s_2}...a_{s_{k-M}}
=\frac{a_1^2a_2^2...a_M^2a_{M+1}...a_{L}}
{a_1a_2...a_Ma_{i_1}a_{i_2}...a_{i_{j-M}}}\]
of $a_1^2a_2^2...a_M^2a_{M+1}...a_{L}$,
comes from the terms in the expansion of $\sigma_k(a)$,
to yield a result $C_{L-M}^{k-M}$.
But since $j+k=L+M$ and hence $(j-M)+(k-M)=j+k-2M=L-M$,
we actually have
$C_{L-M}^{j-M}=C_{L-M}^{k-M}$.
Thus, both of these two observations are equivalent.)
Therefore, the coefficient of $S_{j+k-h}^{h}$
in the decomposition of $\sigma_j(a)\sigma_k(a)$ is
\[C_{(j+k-h)-h}^{j-h}=C_{j+k-2h}^{j-h},\]
and the coefficient of $S_{n-h}^{j+k-n+h}$
in the decomposition of $\sigma_j(a)\sigma_k(a)$ is
\[C_{(n-h)-(j+k-n+h)}^{j-(j+k-n+h)}
=C_{2n-j-k-2h}^{n-k-h}=C_{2n-j-k-2h}^{n-j-h}.\]
Thus the proof of \eref{eqn.sjkl} and \eref{eqn.sjkg} is complete.

\medskip

\emph{Step 4.}\quad
Since
\[(p-q)+(p+q+1)=2p+1\lessgtr n
~\Llra~p\lessgtr(n-1)/2,\]
substituting \eref{eqn.sjkl} and \eref{eqn.sjkg}
into \eref{eqn.sumqgtO}, we get
\begin{enumerate}[\quad(a)]
\item
For each $p\leq(n-1)/2$,
\begin{eqnarray*}
&&\sum_{q=0}^{p}{(-1)^q(2q+1)\sigma_{p-q}\sigma_{p+q+1}}\\
&=&\sum_{q=0}^{p}\sum_{h=0}^{p-q}
{(-1)^q(2q+1){C_{2p+1-2h}^{p-q-h}S_{2p+1-h}^{h}}}\\
&=&\sum_{q=0}^{p}\sum_{h=0}^{p}
{(-1)^q(2q+1){C_{2p+1-2h}^{p-q-h}S_{2p+1-h}^{h}}}\\
&=&\sum_{h=0}^{p}\left(\sum_{q=0}^{p}
{(-1)^q(2q+1)C_{2(p-h)+1}^{(p-h)-q}}\right){S_{2p+1-h}^{h}}\\
&=&\sum_{h=0}^{p}\left(\sum_{q=0}^{p-h}
{(-1)^q(2q+1)C_{2(p-h)+1}^{(p-h)-q}}\right){S_{2p+1-h}^{h}}\\
&=&S_{p+1}^{p}>0,
\end{eqnarray*}
where we used
$C_{2p+1-2h}^{p-q-h}\equiv0$ $(\forall h>p-q)$,
$C_{2(p-h)+1}^{(p-h)-q}\equiv0$ $(\forall q>p-h)$
and \eref{eqn.QIO} in the second,
fourth and fifth equality, respectively.
\item
For each $p>(n-1)/2$,
\begin{eqnarray*}
&&\sum_{q=0}^{n-1-p}{(-1)^q(2q+1)\sigma_{p-q}\sigma_{p+q+1}}\\
&=&\sum_{q=0}^{n-1-p}\sum_{h=0}^{n-1-p-q}
{(-1)^q(2q+1){C_{2n-2p-1-2h}^{n-p+q-h}S_{n-h}^{2p+1-n+h}}}\\
&=&\sum_{q=0}^{n-1-p}\sum_{h=0}^{n-1-p-q}
{(-1)^q(2q+1){C_{2n-2p-1-2h}^{n-1-p-q-h}S_{n-h}^{2p+1-n+h}}}\\
&=&\sum_{q=0}^{n-1-p}\sum_{h=0}^{n-1-p}
{(-1)^q(2q+1){C_{2n-2p-1-2h}^{n-1-p-q-h}S_{n-h}^{2p+1-n+h}}}\\
&=&\sum_{h=0}^{n-1-p}\left(\sum_{q=0}^{n-1-p}
{(-1)^q(2q+1)C_{2n-2p-1-2h}^{n-1-p-h-q}}\right){S_{n-h}^{2p+1-n+h}}\\
&=&\sum_{h=0}^{n-1-p}
\left(\sum_{q=0}^{n-1-p-h}
{(-1)^q(2q+1)C_{2(n-1-p-h)+1}^{(n-1-p-h)-q}}\right)
{S_{n-h}^{2p+1-n+h}}\\
&=&S_{p+1}^{p}>0,
\end{eqnarray*}
where we used
$C_{2n-2p-1-2h}^{n-1-p-q-h}\equiv0$ $(\forall h>n-1-p-q)$,
$C_{2n-2p-1-2h}^{n-1-p-h-q}
\equiv0$ $(\forall q>n-1-p-h)$
and \eref{eqn.QIO} in the third,
fifth and sixth equality, respectively.
\end{enumerate}
Combining \textsl{(a), (b)} and \eref{eqn.pwhn}, we obtain
\begin{equation*}
\wh Z_\ast(a)=\sum_{p=0}^{n-1}{S_{p+1}^{p}(a)},
\end{equation*}
that is, the desired formula \eref{eqn.wZ=SS}.
Thus the proof is completed.
\end{proof}

\medskip

\begin{remark}
\begin{enumerate}[(1)]
\item
The formula \eref{eqn.wZ=SS} in \lref{lem.A.XhY-YhX=SS}
gives us an elementary way to prove
the following two important inequalities in this paper:
\begin{eqnarray*}
\wh Z_\ast(a)
&\triangleq&\big(1-\sigma_2(a)+\sigma_4(a)-...\big)
\big(\sigma_1(a)-3\sigma_3(a)+5\sigma_5(a)-...\big)\\
&&\quad-\big(-2\sigma_2(a)+4\sigma_4(a)-...\big)
\big(\sigma_1(a)-\sigma_3(a)+\sigma_5(a)-...\big)\\&>&0,
\end{eqnarray*}
and
\begin{eqnarray*}
\wh Z(a)&\triangleq&
\cos\Theta\,\big(\sigma_1(a)-3\sigma_3(a)+5\sigma_5(a)-...\big)\\
&&\qquad-\sin\Theta\,\big(-2\sigma_2(a)+4\sigma_4(a)-...\big)
>0,~\mbox{if}~H(a)=\Theta.
\end{eqnarray*}
(See \cref{cor.XhY-YhX>0=>CY-SX>0} and \cref{cor.xikcksk>0}.)

\item
For the special case that $a=\mathds{1}$,
\eref{eqn.wZ=SS} in \lref{lem.A.XhY-YhX=SS} is exactly
\begin{eqnarray*}
\wh Z_\ast(\mathds{1})
&=&(1-C_n^2+C_n^4-...)
(C_n^1-3C_n^3+5C_n^5-...)\\
&&\quad-(-2C_n^2+4C_n^4-...)
(C_n^1-C_n^3+C_n^5-...)\\
&=&\sum_{0\leq 2i\leq n}(-1)^iC_n^{2i}
\sum_{0\leq 2j+1\leq n}(-1)^j(2j+1)C_n^{2j+1}\\
&&\quad-\sum_{0\leq 2i\leq n}(-1)^i(2i)C_n^{2i}
\sum_{0\leq 2j+1\leq n}(-1)^jC_n^{2j+1}\\
&=&\sum_{p=0}^{n-1}S_{p+1}^{p}(\mathds{1})
=\sum_{p=0}^{n-1}C_{n}^{p+1}C_{p+1}^{p}
=\sum_{p=0}^{n-1}C_{n}^{p+1}C_{p+1}^{1}\\
&=&\sum_{p=0}^{n-1}(p+1)C_{n}^{p+1}
=\sum_{k=0}^{n}k{C_n^k}
=n2^{n-1},
\end{eqnarray*}
where for the last equality
we just used the following standard tricks
\begin{eqnarray*}
J(1+x)^{J-1}&=&\frac{d}{dx}\left((1+x)^{J}\right)\\
&=&\frac{d}{dx}\left(\sum_{j=0}^{J}{C_{J}^{j}x^{j}}\right)
=\sum_{j=0}^{J}{{j}C_{J}^{j}x^{j-1}}.
\end{eqnarray*}
\end{enumerate}
\end{remark}

\medskip

\noindent\textbf{Acknowledgement.}
I thank my advisor Prof. D.-S. Li
for his encouragement and helpful conversations.
I also thank Prof. Y. Yuan
for useful discussions during the preparation of this paper.


\end{document}